\definecolor{gray}{rgb}{.5, .5, .5}
\newcommand*{\centerfloat}{%
  \parindent \z@
  \leftskip \z@ \@plus 1fil \@minus \textwidth
  \rightskip\leftskip
  \parfillskip \z@skip}
\newcounter{ctr}
\theoremstyle{plain}
\newtheorem{theorem}{Theorem}[section]
\newtheorem{lemma}[theorem]{Lemma}
\newtheorem{corollary}[theorem]{Corollary}
\newtheorem{proposition}[theorem]{Proposition}
\newtheorem{conjecture}[theorem]{Conjecture}
\newtheorem{propdef}[theorem]{Proposition-Definition}
\theoremstyle{definition}
\newtheorem{definition}[theorem]{Definition}
\newtheorem{remark}[theorem]{Remark}
\newtheorem{example}[theorem]{Example}
\newcommand{\ignore}[1]{}
\newcommand\drawcell[2]{
\draw (0+#1,0+#2)--(1+#1,0+#2)--(1+#1,1+#2)--(0+#1,1+#2)--(0+#1,0+#2);
}
\renewcommand{\a}{\ensuremath{\mathfrak{a}}}
\newcommand{\G}{\ensuremath{\mathcal{G}}}
\newcommand{\QQ}{\ensuremath{\mathbb{Q}}}
\newcommand{\R}{\ensuremath{\mathscr{R}}}
\newcommand{\sgn}{\text{\rm sgn}}
\newcommand{\U}{\mathcal{U}}
\newcommand{\ZZ}{\ensuremath{\mathbb{Z}}}
\newcommand{\be}{\begin{equation}}
\newcommand{\ee}{\end{equation}}
\renewcommand{\S}{\ensuremath{\mathcal{S}}}
\newcommand{\tsr}{\ensuremath{\otimes}}
\newcommand{\rsd}[1]{\ensuremath{\hat{#1}}}
\newcommand{\sh}{\text{\rm sh}}
\def\Tiny{\fontsize{6pt}{6pt}\selectfont}
\newcommand{\crc}[1]{\ensuremath{\overline{#1}}\vphantom{\underline{\overline{#1}}}}
\newcommand{\stand}{\ensuremath{\text{\rm st}}} 
\newcommand\mybox[1]{
\vcenter{
\let\\=\cr
\baselineskip=-16000pt \lineskiplimit=16000pt \lineskip=0pt
\halign{&\boxcell{##}\cr\vline#1\vline\crcr}}}
\newcommand{\boxcell}[1]{{%
\unitlength=\cellsizeCol
\begin{picture}(1,1)
\put(0,0){\makebox(1,1){$#1$}}
\put(0,0){\line(1,0){1}}
\put(0,1){\line(1,0){1}}
\end{picture}%
}}
\newcommand\pad[1]{
\vtop{
\let\\=\cr
\baselineskip=-16000pt
\lineskiplimit=16000pt
\lineskip=0pt
\halign{& \inviscell{##} \cr #1 \crcr} }
\hspace{-.73ex}}
\newcommand{\inviscell}[1]{{%
\unitlength=\cellsizeCol
\begin{picture}(1,1)
\put(0,0){\makebox(1,1){$#1$}}
\end{picture}%
}}
\newlength{\cellsize}
\newcommand\tableau[1]{
\vcenter{
\let\\=\cr
\baselineskip=-16000pt \lineskiplimit=16000pt \lineskip=0pt
\halign{&\tableaucell{##}\cr#1\crcr}}}
\newcommand{\tableaucell}[1]{{%
\def \arg{#1}\def \void{}%
\ifx \void \arg
\vbox to \cellsize{\vfil \hrule width \cellsize height 0pt}%
\else \unitlength=\cellsize
\begin{picture}(1,1)
\put(0,0){\makebox(1,1){$#1\vphantom{\crc{#1}}$}}
\put(0,0){\line(1,0){1}}
\put(0,1){\line(1,0){1}}
\put(0,0){\line(0,1){1}}
\put(1,0){\line(0,1){1}}
\end{picture}%
\fi}}
\newcommand\boldtableau[1]{
\vcenter{
\let\\=\cr
\baselineskip=-16000pt \lineskiplimit=16000pt \lineskip=0pt
\halign{&\boldtableaucell{##}\cr#1\crcr}}}
\newcommand{\boldtableaucell}[1]{{%
\def \arg{#1}\def \void{}%
\ifx \void \arg
\vbox to \cellsize{\vfil \hrule width \cellsize height 0pt}%
\else \unitlength=\cellsize
\begin{picture}(1,1)
\put(0,0){\makebox(1,1){$\mathbf{#1\vphantom{\crc{#1}}}$}}
\put(0,0){\line(1,0){1}}
\put(0,1){\line(1,0){1}}
\put(0,0){\line(0,1){1}}
\put(1,0){\line(0,1){1}}
\end{picture}%
\fi}}
\newlength{\colskip}
\newlength{\dwidth}
\newcommand{\partition}[1]{{\setlength{\cellsize}{1ex} \tiny \tableau{#1}}}
\newcommand{\Ilam}[1]{\ensuremath{{{I_{#1}^\text{\rm Lam}}}}}
\newcommand{\Irkst}[1]{\ensuremath{{{I_{#1}^{\text{\rm KR}, \, \stand}}}}}
\newcommand{\Iassafst}[1]{\ensuremath{{I_{#1}^\text{\rm Assaf, \stand}}}}
\newcommand{\corner}{\ensuremath{{\llcorner}}}
\newcommand{\spin}{\ensuremath{{{\text{\rm spin}}}}}
\newcommand{\inv}{\ensuremath{{\text{\rm inv}}}}
\newcommand{\Des}{\ensuremath{{\text{\rm Des}}}}
\newcommand{\invi}[1]{\ensuremath{{\text{\rm inv$_{#1}'$}}}}
\newcommand{\Desi}[1]{\ensuremath{{\text{\rm Des$_{#1}'$}}}}
\newcommand{\invip}[1]{\ensuremath{{\text{\rm inv$_{#1}''$}}}}
\newcommand{\Desip}[1]{\ensuremath{{\text{\rm Des$_{#1}''$}}}}
\newcommand{\WRib}{\ensuremath{{\text{\rm WRib}}}}
\newcommand{\Wi}[1]{\ensuremath{{\text{\rm W$_{#1}'$}}}}
\newcommand{\qlam}{\ensuremath{{\hat{q}}}}
\newcommand{\field}{\ensuremath{\mathbf{A}}}
\newcommand{\e}{\mathsf}
\newcommand{\sqread}{\text{\rm sqread}}
\newcommand{\quot}{\text{\rm quot}}
\newcommand{\core}{\text{\rm core}}
\newcommand{\SYT}{\text{\rm SYT}}
\newcommand{\RSST}{\text{\rm RSST}}
\newcommand{\Tab}{\text{\rm Tab}}
\newcommand{\nearr}{{\scalebox{.75}{$\nearrow$}}}
\newcommand{\searr}{{\scalebox{.75}{$\searrow$}}}
\newcommand{\nwarr}{{\scalebox{.75}{$\nwarrow$}}}
\newcommand{\nearrsub}{{\scalebox{.5}{$\nearrow$}}}
\newcommand{\searrsub}{{\scalebox{.5}{$\searrow$}}}
\newcommand{\Jnot}[1]{\ensuremath{{\hspace{1pt}\text{\rm:}~\raisebox{1pt}{$\e{#1}$}~\text{\rm:}\hspace{1pt}}}}
\newcommand{\Jnotb}[2]{\ensuremath{{{}_{#1}{}^{\e{#2}}}}}
\newcommand{\QQA}{{\ensuremath{\QQ[\qlam,\qlam^{-1}]}}}
\title{Haglund's conjecture on 3-column Macdonald polynomials}
\keywords{LLT polynomials, $q$-Littlewood-Richardson coefficients, noncommutative Schur functions, flagged Schur functions, inversion number}
\begin{document}

\author{Jonah Blasiak}
\email{jblasiak@gmail.com}
\address{Department of Mathematics, Drexel University, Philadelphia, PA 19104}
\thanks{This work was supported by NSF Grant DMS-14071174.}

\begin{abstract}
We prove a positive combinatorial formula for the Schur expansion of LLT polynomials indexed by a 3-tuple of skew shapes.
This verifies a conjecture of Haglund \cite{Haglund}.
The proof requires expressing a noncommutative Schur function as  a positive sum of monomials in Lam's algebra of ribbon Schur operators \cite{LamRibbon}.
Combining this result with the expression of Haglund, Haiman, and Loehr \cite{HHL} for transformed Macdonald polynomials in terms of LLT polynomials then yields a positive combinatorial rule for transformed Macdonald polynomials indexed by a shape with 3 columns.
\end{abstract}

\maketitle

\section{Introduction}
In the late 90's,  Lascoux, Leclerc, and Thibon \cite{LLT} defined a family of symmetric functions depending on a parameter $q$, in terms of ribbon tableaux and the spin statistic.
These symmetric functions, known as \emph{LLT polynomials},
are now fundamental in the study of Macdonald polynomials and diagonal coinvariants, and have intriguing connections to Kazhdan-Lusztig theory, $k$-Schur functions, and plethysm.

In this paper we work with the version of LLT polynomials from \cite{HHLRU}, which are indexed by $k$-tuples of skew shapes.
We give a positive combinatorial formula for the Schur expansion of LLT polynomials indexed by a 3-tuple of skew shapes.
The Haglund-Haiman-Loehr formula \cite{HHL} expresses the transformed Macdonald polynomials $\tilde{H}_\mu(\mathbf{x};q,t)$ as a positive sum of the LLT polynomials
indexed by a $k$-tuple of ribbon shapes, where $k$ is the number of columns of $\mu$.
Haglund \cite{Haglund} conjectured a formula for the LLT polynomials that appear in this formula in the case that the partition  $\mu$ has 3 columns.
Our result proves and generalizes this formula.

The \emph{new variant $q$-Littlewood-Richardson coefficients} $\mathfrak{c}_{\bm{\beta}}^\lambda(q)$ are the coefficients in the Schur expansion of
LLT polynomials, i.e.
\[\mathcal{G}_{\bm{\beta}}(\mathbf{x};q) = \sum_{\lambda}\mathfrak{c}_{\bm{\beta}}^\lambda(q)s_\lambda(\mathbf{x}),\]
where $\mathcal{G}_{\bm{\beta}}(\mathbf{x};q)$ is the LLT polynomial indexed by a  $k$-tuple  $\bm{\beta}$ of skew shapes; the adjective new variant
indicates that these correspond to the version of LLT polynomials from \cite{HHLRU}, not the version used in \cite{LLT, LT00, LamRibbon}.
These coefficients are polynomials in $q$ with nonnegative integer coefficients.
This was proved in the case that $\bm{\beta}$ is a tuple of partition shapes \cite{LT00} by showing that these coefficients are essentially  parabolic Kazhdan-Lusztig polynomials.
The general case was proved in \cite{GH}, also using Kazhdan-Lusztig theory.
The paper \cite{Sami} claims a combinatorial proof of positivity.
However, neither of these methods yields an explicit positive combinatorial interpretation of the new variant $q$-Littlewood-Richardson coefficients for  $k > 2$.
(The approach of \cite{Sami}, though combinatorial, involves an intricate algorithm to transform a  D graph into a dual equivalence graph and has yet to produce explicit formulas for  $k > 2$.)

Explicit combinatorial formulas for the new variant $q$-Littlewood-Richardson coefficients have been found in the following cases.
A combinatorial interpretation
for the $k=2$ case was stated by Carr\'{e} and Leclerc in \cite{CL95}, and its proof was completed by van Leeuwen in \cite{vL00} (see \cite[\textsection9]{HHL}).
Assaf \cite{Ass08} gave another interpretation of these coefficients.
Roberts \cite{RobertsDgraph} extended the work of Assaf to give an explicit formula for $\mathfrak{c}_{\bm{\beta}}^\lambda(q)$ in the case that the diameter of  $\bm{\beta}$ is  $\le 3$,
where the diameter of a $k$-tuple $\bm{\beta}$ of skew shapes is
\begin{align}\label{e diameter}
\max\big\{ \big|C \cap \{i, i+1, \dots, i+k\}\big| : i \in \ZZ \big\},
\end{align}
where $C$ is the set of distinct shifted contents of the cells of $\bm{\beta}$ (see \eqref{e intro tilde c} below).
Formulas for the coefficient of  $s_\lambda(\mathbf{x})$ in $\tilde{H}_\mu(\mathbf{x};q,t)$ are known when  $\lambda$ or  $\mu$ is a hook shape and
when  $\mu$ has two rows or two columns.
Fishel \cite{Fis95} gave the first combinatorial interpretation for such coefficients in the case  $\mu$ has 2 columns using rigged configurations.
Zabrocki and Lapointe-Morse also gave formulas for this case \cite{Zab99,LM03}.

Let  $\U$ be the free associative algebra in the noncommuting variables $u_i$,  $i \in \ZZ$.
The \emph{plactic algebra} is the quotient of $\U$ by the Knuth equivalence relations.
It has been known since the work of Lascoux and Sch\"utzenberger \cite{LS} that the plactic algebra contains a subalgebra isomorphic to the ring of symmetric functions, equipped with a basis of noncommutative versions of Schur functions.
Fomin and Greene \cite{FG} showed that a similar story holds if certain pairs of Knuth relations are replaced by weaker four-term relations.
This yields positive formulae for the Schur expansions of a large class of symmetric functions that includes the Stanley symmetric functions and stable Grothendieck polynomials.
Lam \cite{LamRibbon} realized later  that some of this machinery can be applied to LLT polynomials.
To do this, he defined the \emph{algebra of ribbon Schur operators} $\U/\Ilam{k}$ to be the algebra generated by operators  $u_i$ which act on partitions by adding  $k$-ribbons.
He also explicitly described the relations satisfied by the  $u_i$, which we take here as the definition of this algebra (see \textsection\ref{ss Lams algebra}).
Lam gave a simple interpretation of LLT polynomials using this algebra (Remark \ref{r spin LLT}).

The main difficulty in carrying out the Fomin-Greene approach in this setting is
expressing the noncommutative Schur functions  $\mathfrak{J}_\lambda(\mathbf{u})$ (see \textsection\ref{ss noncommutative flagged schur}) as a positive sum of monomials in  $\U/\Ilam{k}$.
Lam does this for $\lambda$ of the form  $(a,1^b)$,  $(a,2)$, $(2,2,1^a)$ and $k$ arbitrary.

Our main theorem is
\begin{theorem}
In the algebra  $\U/\Ilam{3}$, the noncommutative Schur function $\mathfrak{J}_\lambda(\mathbf{u})$ is equal to the following positive sum of monomials
\label{t J intro}
\[
\mathfrak{J}_\lambda(\mathbf{u}) = \sum_{T \in \RSST_\lambda} \sqread(T).
\]
\end{theorem}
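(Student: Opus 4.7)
The plan is to follow the Fomin--Greene paradigm, adapted to Lam's algebra $\U/\Ilam{3}$. Starting from a noncommutative Jacobi--Trudi identity expressing $\mathfrak{J}_\lambda(\mathbf{u})$ as an alternating sum of products of noncommutative complete symmetric functions $\mathfrak{h}_n(\mathbf{u})$, and expanding each $\mathfrak{h}_n$ as its usual sum of weakly increasing monomials in $\mathbf{u}$, one obtains a signed sum of monomials indexed by certain fillings of composition shapes derived from $\lambda$ by permuting rows. The goal is to cancel all signed terms except the monomials $\sqread(T)$ for $T \in \RSST_\lambda$, each with coefficient $+1$.

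The central tool would be a sign-reversing involution $\iota$ on the bad fillings that preserves the class modulo $\Ilam{3}$. I would define $\iota$ by scanning a filling for the first violation of the reverse-semistandard property along the sqread reading order, and then performing a local rewriting of the reading word. Because the sqread word traverses the tableau in a specific pattern different from ordinary row or column readings, the pair of entries being exchanged will typically lie in a configuration where $\U/\Ilam{3}$ supplies a suitable rewriting rule: a commutation when the two indices are far apart, or one of the defining four-term relations of $\Ilam{3}$ rewriting the monomial as a signed combination of two others when the indices are close.

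The main obstacle is that $\U/\Ilam{3}$ satisfies only a restricted set of relations among the $u_i$: a handful of quadratic and cubic relations together with certain commutations and controlled four-term relations, not the full Knuth relations of the plactic algebra. Many naive swaps that drive the classical Fomin--Greene involution are therefore unavailable, and even the direction in which a four-term relation can be applied is limited. The technical heart of the proof will be a case analysis verifying that every local move $\iota$ is allowed to make can indeed be realized modulo $\Ilam{3}$, possibly by chaining several relations. Lam handled $\lambda \in \{(a,1^b),\,(a,2),\,(2,2,1^a)\}$ by tailored arguments; the challenge here is to find a uniform definition of $\iota$, driven by the geometry of $\sqread$, that works for every shape $\lambda$, and to identify the right inductive framework (probably induction on $|\lambda|$ or on the number of rows of $\lambda$, with base cases supplied by Lam's known identities).

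To finish, one verifies that $\iota$ is a fixed-point-free involution on the set of bad fillings, that its fixed-point complement is exactly $\{\sqread(T) : T \in \RSST_\lambda\}$, and that no further cancellation takes place among these surviving terms in $\U/\Ilam{3}$. Combined with Theorem~\ref{t J intro}, this then yields, via Lam's interpretation of LLT polynomials and the Haglund--Haiman--Loehr formula, the positivity of the new variant $q$-Littlewood--Richardson coefficients $\mathfrak{c}_{\bm{\beta}}^\lambda(q)$ for 3-tuples $\bm{\beta}$ of skew shapes, and hence Haglund's conjecture for 3-column Macdonald polynomials.
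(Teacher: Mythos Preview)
Your proposal is not a proof but a plan, and the plan has a genuine gap: you never define the involution $\iota$. You explicitly acknowledge that ``the challenge here is to find a uniform definition of $\iota$'' and that the ``technical heart'' would be a case analysis you have not carried out. A sign-reversing involution in the Fomin--Greene style requires that each cancelling pair of monomials be \emph{equal} in the quotient; in $\U/\Ilam{3}$ the only binomial relations available are far commutation and the $q$-commutation \eqref{e q commute}, while the remaining defining relations are monomial relations (things equal to zero). There are no ``four-term relations'' in $\Ilam{3}$ that let you rewrite one monomial as a signed combination of two others, so the mechanism you describe for handling close indices does not exist in this algebra. Without an explicit $\iota$ and a verification that every move it makes is one of the binomial relations actually present, nothing has been shown.

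The paper's argument is of a completely different nature. It works with the \emph{elementary} Jacobi--Trudi determinant $\mathfrak{J}_\lambda(\mathbf{u}) = J_{\lambda'}(\ZZ,\ldots,\ZZ)$ and proves a more refined statement (Theorem~\ref{t flag Schur k3 technical}) about column-flagged versions $J_{\alpha}^{\mathbf{n}}$ by induction on $|\alpha|$ and the flags $n_i$. The inductive step uses the one-letter expansion $e_d([m]) = \e{m}\,e_{d-1}([m-1]) + e_d([m-1])$ in a chosen column $j$, peeling entries off the shape one diagonal at a time and recording them in a growing RSST $R$. The nontrivial work is showing that the ``bad'' branch of each expansion vanishes in $\U/\Ilam{3}$; this relies on Corollary~\ref{c temperley lieb} and Corollary~\ref{c removable 0} (vanishing of words with a repeated letter bracketing only larger or only smaller letters), on a key commutation Lemma~\ref{l lambda1 eq lambda2 commute} allowing a letter $\e{x}$ to be moved past a $J_{(a,a)}$ block, and on Theorem~\ref{t square respecting connected} guaranteeing all square respecting reading words of an RSST coincide in $\U/\Ilam{3}$. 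No sign-reversing involution appears; cancellation is achieved algebraically, term by term, via these vanishing lemmas.
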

Here, $\RSST_\lambda$ is the set  of semistandard Young tableaux of shape  $\lambda$ that are row strict and such that entries increase in increments of at least 3
along diagonals. To define $\sqread(T)$, first
draw arrows as shown between entries of $T$ for each of its $2\times 2$ subtableaux of the following forms:
\setlength{\cellsize}{3.4ex}
\[{\tiny
\tableau{{\color{black}\put(.8,-1.6){\vector(-1,1){.6}}}\put(-.1,-0.96){$a$}&a\text{+}1\\a\text{+}2&a\text{+}3}\qquad
\tableau{{\color{black}\put(.8,-1.6){\vector(-1,1){.6}}}\put(-.1,-0.96){$a$}&a\text{+}2\\a\text{+}2&a\text{+}3}\qquad\qquad
\tableau{{\color{black}\put(.25,-1.1){\vector(1,-1){.6}}}\put(-.1,-1){$a$}&a\text{+}2\\a\text{+}1&a\text{+}3}\qquad
\tableau{{\color{black}\put(.25,-1.1){\vector(1,-1){.6}}}\put(-.1,-1){$a$}&a\text{+}1\\a\text{+}1&a\text{+}3}}.\]
Then  $\sqread(T)$ is a reading word of $T$  in which the
tail of each arrow appears before the head of each arrow
(any reading word satisfying these properties can be used in Theorem \ref{t J intro} and Corollary \ref{c main intro} below---$\sqread(T)$ is just a convenient choice).
For example,
\setlength{\cellsize}{1.9ex}
\[
\sqread\bigg(\,
{ \tiny \tableau{
1&2&4&6\\3&4&5&7\\8\\}}
\, \bigg) = \e{834152476}.
\]
\setlength{\cellsize}{2.1ex}

We show that by an adaptation of the Fomin-Greene theory of noncommutative Schur functions similar to Lam's \cite[Section 6]{LamRibbon} (see \textsection\ref{ss qLR coefs}),
the coefficient of a Schur function in an LLT polynomial is equal to $\langle \mathfrak{J}_\lambda(\mathbf{u}), f \rangle$, where  $f$ is a certain element of $\U$ that encodes an LLT polynomial and  $\langle \cdot, \cdot \rangle$ denotes the symmetric bilinear form on  $\U$ for which monomials form an orthonormal basis.
Hence Theorem \ref{t J intro} yields a positive combinatorial formula for the new variant $q$-Littlewood-Richardson coefficients
$\mathfrak{c}_{\bm{\beta}}^\lambda(q)$ for 3-tuples $\bm{\beta}$ of skew shapes.
We now state this formula in a special case which implies Haglund's conjecture and therefore, by \cite{HHL}, yields a formula for 3-column Macdonald polynomials
(see Corollary \ref{c main} for the full statement with no restriction on the 3-tuple of skew shapes, and see the discussion after Corollary \ref{c main} for the precise relation to Haglund's conjecture).

Let $\bm{\beta} =(\beta^{(0)},\dots,\beta^{(k-1)})$ be a $k$-tuple of skew shapes.
The \emph{shifted content} of a cell  $z$ of  $\bm{\beta}$ is
\begin{equation}
\label{e intro tilde c}
\tilde{c}(z) = k\cdot c(z)+i,
\end{equation}
when $z \in \beta^{(i)}$ and where $c(z)$ is the usual content of $z$ regarded as a cell of $\beta^{(i)}$.
For $\bm{\beta}$ such that each $\beta^{(i)}$ contains no $2 \times 2$ square, define $\Wi{k}(\bm{\beta})$ to be the set of words $\e{v}$ such that
\begin{itemize}
\item $\e{v}$ is a rearrangement of the shifted contents of $\bm{\beta}$,
\item  for each  $i$ and each pair  $z, z'$ of cells of  $\beta^{(i)}$ such that $z'$
lies immediately east or north of  $z$, the letter $\tilde{c}(z)$ occurs before  $\tilde{c}(z')$ in  $\e{v}$.
\end{itemize}
Define the following variant of the inversion statistic of \cite{HHLRU, Sami}:
\begin{align*}
\invi{k}(\e{v}) &= |\{(i,j)\mid \text{$i<j$ and $0<\e{v_i}-\e{v_j}<k$}\}|
\end{align*}
for any word $\e{v}$ in the alphabet of integers.
For example,
the 3-tuple $\bm{\beta}= \left(\partition{&~\\&},\partition{&~&~\\&~&},\partition{&&~\\&~&~}\right) =
(2,32,33)/(1,11,21)$
of skew shapes  has
shifted contents
\[{\tiny\left(\tableau{&3\\&},\tableau{&4&7\\&1&},\tableau{&&8\\&2&5}\right)}.\]
The word $\e{8341275}$ belongs to $\Wi{3}(\bm{\beta})$ and  $\invi{3}(\e{8341275}) = 5$.

\begin{corollary}\label{c main intro}
Let $\bm{\beta} = (\beta^{(0)},\beta^{(1)},\beta^{(2)})$ be a 3-tuple of skew shapes such that each  $\beta^{(i)}$ contains no  $2 \times 2$ square. The corresponding new variant $q$-Littlewood-Richardson coefficients are given by
\[\mathfrak{c}_{\bm{\beta}}^\lambda(q) =
\sum_{\substack{T \in \RSST_\lambda\\  \sqread(T) \in\Wi{3}(\bm{\beta})}}
q^{\invi{3}(\sqread(T))}.
\]
\end{corollary}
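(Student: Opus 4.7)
The plan is to combine Theorem \ref{t J intro} with the Fomin--Greene/Lam pairing formula outlined in the introduction. From \textsection\ref{ss qLR coefs} one has
\[
\mathfrak{c}_{\bm{\beta}}^\lambda(q) = \langle \mathfrak{J}_\lambda(\mathbf{u}),\, f_{\bm{\beta}} \rangle,
\]
where $f_{\bm{\beta}} \in \U$ is a specific element encoding the LLT polynomial $\mathcal{G}_{\bm{\beta}}(\mathbf{x};q)$ and $\langle\cdot,\cdot\rangle$ is the form for which monomials in the $u_i$ are an orthonormal basis. By Theorem \ref{t J intro}, $\mathfrak{J}_\lambda(\mathbf{u}) = \sum_{T \in \RSST_\lambda}\sqread(T)$ in $\U/\Ilam{3}$; since the pairing descends from $\U$ in a manner compatible with this quotient (the general framework of \textsection\ref{ss qLR coefs}), the corollary reduces to identifying the coefficient of each monomial $\sqread(T)$ in $f_{\bm{\beta}}$.

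The heart of the argument is therefore an explicit description of $f_{\bm{\beta}}$ in the case where each $\beta^{(i)}$ contains no $2\times 2$ square. Under this assumption each $\beta^{(i)}$ is a disjoint union of ribbons, so the fillings entering Lam's definition of $f_{\bm{\beta}}$ are standard tableaux of $\bm{\beta}$ that are simultaneously row- and column-strict within each component, and they correspond bijectively to words $\e{v}$ obtained by listing the shifted contents $\tilde{c}(z)$ of cells of $\bm{\beta}$ in the order given by their standard label. The standard filling condition translates exactly into the defining constraint of $\Wi{3}(\bm{\beta})$: $\tilde{c}(z)$ precedes $\tilde{c}(z')$ in $\e{v}$ whenever $z'$ lies immediately north or east of $z$ inside some $\beta^{(i)}$. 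Moreover, the $2\times 2$-free hypothesis forces the attacking pairs of \cite{HHLRU,Sami} that contribute to the spin of such a filling to be precisely the pairs of shifted contents with $0 < \e{v_i} - \e{v_j} < 3$, so the LLT weight collapses to $q^{\invi{3}(\e{v})}$. Hence
\[
f_{\bm{\beta}} = \sum_{\e{v}\in \Wi{3}(\bm{\beta})} q^{\invi{3}(\e{v})}\,\e{v}.
\]
Combining this with Theorem \ref{t J intro} and orthonormality of monomials then yields
\[
\mathfrak{c}_{\bm{\beta}}^\lambda(q) = \sum_{T \in \RSST_\lambda}\,\sum_{\e{v}\in \Wi{3}(\bm{\beta})} q^{\invi{3}(\e{v})}\,\langle \sqread(T),\e{v}\rangle = \sum_{\substack{T \in \RSST_\lambda \\ \sqread(T)\in \Wi{3}(\bm{\beta})}} q^{\invi{3}(\sqread(T))},
\]
as claimed.

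The main obstacle is the statistic-matching step in the second paragraph: one must verify that the spin/inversion weight attached by Lam's formula to a standard filling of $\bm{\beta}$ coincides, after the relabeling $c(z)\mapsto \tilde{c}(z)$ from \eqref{e intro tilde c}, with $\invi{3}$ on the associated word. This is a bookkeeping exercise about the conventions in \cite{HHLRU, LamRibbon}, using the fact that two cells can form an attacking pair only if they sit in components $\beta^{(i)}$ and $\beta^{(j)}$ with $|i-j| < 3$ at comparable contents; the $2\times 2$-free hypothesis is exactly what prevents further attacking configurations from appearing within a single component. Once the two statistics are aligned, the remainder of the proof is formal. The unrestricted version (Corollary \ref{c main}) will require a more delicate description of $f_{\bm{\beta}}$, but the $2\times 2$-free case is clean enough to give the Haglund conjecture directly.
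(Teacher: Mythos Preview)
Your overall architecture matches the paper's: form the element $f_{\bm{\beta}}=\sum_{\e{v}\in\Wi{3}(\bm{\beta})}q^{\invi{3}(\e{v})}\e{v}$, check it lies in $(\Ilam{3})^\perp$, apply the Fomin--Greene/Lam pairing (Theorem~\ref{t basics}) to get $\mathfrak{c}_{\bm{\beta}}^\lambda(q)=\langle\mathfrak{J}_\lambda(\mathbf{u}),f_{\bm{\beta}}\rangle$, and then plug in Theorem~\ref{t J intro}. This is exactly the proof of Corollary~\ref{c main}, of which Corollary~\ref{c main intro} is a special case.

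Where you go astray is in your account of the ``main obstacle'' and the role of the $2\times 2$-free hypothesis. The identity $\Delta(f_{\bm{\beta}})=\mathcal{G}_{\bm{\beta}}(\mathbf{x};q)$, equivalently the statistic match between $\invi{3}$ on words and the LLT inversion number, holds for \emph{every} $3$-tuple of skew shapes: this is Proposition~\ref{p assafi} (built on Proposition~\ref{p equivalence classes} and \eqref{e inv des}), and requires no ribbon assumption. Your claim that the $2\times 2$-free hypothesis is what ``prevents further attacking configurations from appearing within a single component'' is not correct; the identification of statistics is purely formal and general. The actual role of the hypothesis is much more modest: it guarantees that the shifted contents of $\bm{\beta}$ are pairwise distinct, so that the elementary description of $\Wi{3}(\bm{\beta})$ given just before Corollary~\ref{c main intro} (a rearrangement of shifted contents subject to the adjacency constraints) coincides with the general definition $\Wi{3}(\bm{\beta})=\{\e{v}:\bm{\delta}\circ_{\mathbf{0}}\e{v}=\bm{\gamma}\}$ from Definition~\ref{d Wi}. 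With repeated contents (i.e., with a $2\times 2$ square) that simple description no longer makes sense, which is why the general Corollary~\ref{c main} is phrased differently. So your proof sketch is sound once you relocate the hypothesis to the right place; the bookkeeping about attacking pairs that you flag as the crux is unnecessary.
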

Note that if the set of shifted contents of  $\bm{\beta}$ is  $[n]$, since elements of  $\Wi{3}(\bm{\beta})$ contain no repeated letter,  $\RSST_\lambda$ can be replaced by
the set of standard Young tableaux of shape  $\lambda$ in Corollary~\ref{c main intro}.

This paper is part of the series \cite{BD0graph,BF}, which  uses ideas of \cite{LamRibbon,Sami,Sami2} to  generalize the Fomin-Greene theory \cite{FG} to quotients of  $\U$ with weaker relations than the quotients
considered in \cite{FG}.
The papers \cite{BD0graph, BF} are largely devoted to understanding the difficulties in pushing the approach in this paper beyond $k=3$.
In Section \ref{s generalizations}, we conjecture a strengthening of Theorem \ref{t J intro} to a quotient $\U/\Irkst{\leq 3}$ of $\U$  whose relations combine the Knuth relations and what we call \emph{rotation relations}:
\[ u_au_cu_b = u_bu_au_c \quad \text{ and } \quad u_cu_au_b = u_bu_cu_a \quad \text{ for $a<b<c$}.\]
Similar algebras are studied more thoroughly in \cite{BD0graph, BF}.
The idea of combining these two kinds of relations is due to Assaf \cite{Sami, Sami2} (see Remark \ref{r Sami KR}).
Though we have not been able to prove this strengthening, having it as a goal was crucial to our discovery and proof of Theorem \ref{t J intro}.

This paper is organized as follows: Section \ref{s Lam's algebra} introduces Lam's algebra of ribbon Schur operators  $\U/\Ilam{k}$ and defines LLT polynomials.  Section \ref{s reading} introduces the combinatorics  needed for the proof of the main theorem.  In Section \ref{s positive monomial}, we  state and prove a stronger, more technical version of the main theorem,  and we state and prove the full version of Corollary \ref{c main intro}.
In Section \ref{s generalizations}, we conjecture a strengthening of  the main theorem and  describe our progress towards proving it.

\section{Lam's algebra and LLT polynomials}
\label{s Lam's algebra}
We introduce Lam's algebra of ribbon Schur operators, reconcile a definition of LLT polynomials from \cite{Sami} with equivalence classes of words in this algebra, and reformulate this
definition in a way that is convenient for our main theorem.

\subsection{Diagrams and partitions}\label{ss diagram}

A \emph{diagram} or \emph{shape} is a finite subset of $\ZZ_{\ge 1}\times\ZZ_{\ge 1}$. A diagram is drawn as a set of  square cells in the plane with the English (matrix-style) convention so that row (resp. column) labels start with 1 and increase from north to south (resp. west to east).
The \emph{diagonal} or \emph{content} of a cell $(i,j)$ is $j-i$.

A partition $\lambda$ of $n$ is a weakly decreasing sequence $ (\lambda_1, \ldots, \lambda_l)$ of nonnegative integers that sum to $n$.
The \emph{shape} of  $\lambda$ is the subset
$\{(r,c) \mid r \in [l], \ c \in [\lambda_r]\}$
of $\ZZ_{\geq 1} \times \ZZ_{\geq 1}$.
Write $\mu \subseteq \lambda$ if the shape of $\mu$ is contained in the shape of $\lambda$.
If $\mu \subseteq \lambda$, then $\lambda/\mu$ denotes the \emph{skew shape} obtained by removing the cells of $\mu$ from the shape of $\lambda$.
The \emph{conjugate partition} $\lambda'$ of $\lambda$ is the partition whose shape is the transpose of the shape of $\lambda$.

We will make use of the following partial orders on $\ZZ_{\ge 1}\times\ZZ_{\ge 1}$:
\begin{align*}
\text{$(r,c)\le_{\searrsub}(r',c')$  whenever $r\le r'$ and $c\le c'$,}\\
\text{$(r,c)\le_{\nearrsub}(r',c')$  whenever $r\ge r'$ and $c\le c'$.}
\end{align*}
 It will occasionally be useful to think of diagrams  as posets for the order  $<_\searrsub$ or $<_\nearrsub$.

\subsection{The algebra $\U$}

We will mostly work over the ring $\field=\QQA$ of Laurent polynomials in the indeterminate $\qlam$.
Let  $\U$ be the free associative  $\field$-algebra in the noncommuting variables $u_i$,  $i \in \ZZ$.
We think of the monomials of $\U$ as words in the alphabet of integers and frequently write  $\e{n}$ for the variable $u_n$.
We often abuse notation by writing  $\e{v}\in \U$ to mean that  $\e{v}$ is a monomial of $\U$.

\subsection{The nil-Temperley-Lieb algebra}
\label{ss The nil-Temperley-Lieb algebra}

The nil-Temperley-Lieb algebra \cite{BJS,FG} is the $\field$-algebra generated by $s_i$, $i\in\ZZ$, and relations
\begin{alignat*}{3}
&s_i^2 &&= ~~ 0, \\
&s_{i+1}s_is_{i+1} &&= ~~ 0, \\
&s_is_{i+1}s_i &&= ~~ 0, \\
&s_i s_j &&= ~~ s_j s_i \qquad \text{for $|i-j| > 1$.}
\end{alignat*}
The reduced words of 321-avoiding permutations of  $[n]$ form a basis for the subalgebra generated by $s_1,s_2,\dots,s_{n-1}$. Here and throughout the paper, $[n] := \{1,2,\dots,n\}$.

Let $\mathcal{P}$ denote the set of partitions and write $\field \mathcal{P}$ for the free  $\field$-module with basis indexed by $\mathcal{P}$.
By \cite{BJS} (see also \cite{FG}), there is a faithful action of the nil-Temperley-Lieb algebra on  $\field \mathcal{P}$, defined by
\[\nu \circ s_i = \begin{cases}
\mu \qquad & \text{if $\mu/\nu$ is a cell of content $i$,} \\
0 \qquad & \text{otherwise.}
\end{cases}\]
A \emph{skew shape with contents} is an equivalence class of skew shapes, where two skew shapes are equivalent if there is a content and
$<_\searrsub$-order preserving bijection between their diagrams.
By \cite[\textsection2]{BJS}, the map sending an element  $v = s_{i_1} \cdots s_{i_t}$ of the nil-Temperley-Lieb algebra to those skew shapes  $\mu/\nu$ such that  $\nu \circ v = \mu$ defines a bijection from the monomial basis of the nil-Temperley-Lieb algebra to skew shapes with contents.

\subsection{Lam's algebra of ribbon Schur operators}
\label{ss Lams algebra}

Lam defines \cite{LamRibbon} an algebra of ribbon Schur operators, which gives an elegant algebraic framework for LLT polynomials.  It  sets the stage for  applying the theory of noncommutative symmetric functions from \cite{FG}  to the problem of computing Schur expansions of LLT polynomials.
We have found it most convenient to work with the following variant\footnote{There is an algebra antiautomorphism from $\QQ(\qlam)\tsr_\field \U/\Ilam{k}$ to the algebra in \cite{LamRibbon},  defined  by sending
$u_i \mapsto u_i$ and setting $q = \qlam^{-2}$, where $\qlam$ denotes the $q$ from \cite{LamRibbon}.} of Lam's algebra.
Set  $q = \qlam^{-2}$.  Let  $\U/\Ilam{k}$ be the quotient of  $\U$ by the following relations (let $\Ilam{k}$ denote the corresponding two-sided ideal of $\U$):
\begin{alignat}{3}
&u_i^2 &&=~~ 0 \qquad &&\text{for $i\in\ZZ$,}   \\
&u_{i+k}u_iu_{i+k} &&=~~ 0 \qquad &&\text{for $i\in\ZZ$,}  \\
&u_iu_{i+k}u_i &&=~~ 0 \qquad &&\text{for $i\in\ZZ$,}   \\
&u_iu_j &&=~~ u_ju_i \qquad &&\text{for $|i-j| > k$,} \label{e far commute} \\
&u_iu_j &&=~~ q^{-1} u_ju_i \qquad &&\text{for $0<j-i<k$.} \label{e q commute}
\end{alignat}
We refer to \eqref{e far commute} as the \emph{far commutation} relations.

A $k$-ribbon is a connected skew shape of size $k$ containing no $2\times2$ square. The \emph{content} of  a ribbon is the maximum of the contents of its cells.
The \emph{spin} of a ribbon $R$, denoted $\spin(R)$, is the number of rows in the ribbon, minus 1.
By \cite{LamRibbon}, the following defines a faithful right action of  $\U/\Ilam{k}$ on $\field \mathcal{P}$:
\[\nu \cdot u_i = \begin{cases}
\qlam^{\spin(\mu/\nu)}\mu \qquad & \text{if $\mu/\nu$ is a $k$-ribbon of content $i$,} \\
0 \qquad & \text{otherwise.}
\end{cases}\]

For the variant of LLT polynomials we prefer to work with in this paper (see \textsection\ref{ss LLT}), it is better to work with an action on $k$-tuples of partitions.  We now describe this action and its relation to the one just defined.  Unfortunately, we only know how to relate these actions at  $\qlam =1$.

We briefly recall the definitions of  $k$-cores and  $k$-quotients (see \cite{HHLRU} for a more detailed discussion).
The  \emph{$k$-core} of a partition  $\mu$, denoted  $\core_k(\mu)$, is the unique partition obtained from  $\mu$ by removing  $k$-ribbons until it is no longer possible to do so.
Let  $\mu$ be a partition with $k$-core $\nu$.  Then for each  $i = 0,1,\dots,k-1$, there is exactly one way to add a  $k$-ribbon of content  $c_i \equiv i \bmod{k}$ to  $\nu$.
The \emph{$k$-quotient of $\mu$}, denoted $\quot_k(\mu)$, is  the  unique $k$-tuple $\bm{\gamma}=(\gamma^{(0)},\dots,\gamma^{(k-1)})$ of partitions such that the multiset of integers $k\cdot c(z)+c_i$ for $z\in\gamma^{(i)}$ and $i = 0,1,\dots,k-1$ is equal to the multiset of contents of the ribbons in any $k$-ribbon tiling of $\mu/\nu$.




Let $\U/\Ilam{k}|_{\qlam=1}$ denote the algebra $\U/\Ilam{k}$ specialized to  $q=1$.
It is equal to the  $k$-fold tensor product over  $\QQ$ of the nil-Temperley-Lieb algebra.
For each  $\mathbf{d}=(d_0,\dots,d_{k-1}) \in \ZZ^n$, define
an action of $\U/\Ilam{k}|_{\qlam=1}$ on  $k$-tuples of partitions by
\begin{align*}
\bm{\delta} \circ_\mathbf{d} u_i = (\delta^{(0)}, \ldots, \delta^{(\rsd{i}-1)}, \delta^{(\rsd{i})} \circ s_{(i-\rsd{i})/k-d_{\rsd{i}}}, \delta^{(\rsd{i}+1)},\ldots),
\end{align*}
where $\rsd{i}$ denotes the element of $\{0,1,\dots,k-1\}$ congruent to $i\bmod{k}$.
This action is equivalent to the action of $\U/\Ilam{k}|_{\qlam=1}$ on partitions with  $k$-core  $\nu$ when  $d_i = (c_i-i)/k$ and the  $c_i$ are determined by  $\nu$ as above; the precise relation is  $\nu \cdot \e{v} = \qlam^a \mu$ for some  $a \in \ZZ$
if and only if  $\quot_k(\nu) \circ_\mathbf{d} \e{v}= \quot_k(\mu)$,  for any word $\e{v} \in \U$.

\subsection{Statistics on words and equivalence classes}

Following \cite{Sami}, we introduce statistics  $\Des$ and  $\inv$ on words and the set of  $k$-ribbon words, which will prepare us to define LLT polynomials.  We then relate these to the algebra $\U/\Ilam{k}$.

Given a word $w$ of positive integers (not thought of as an element of $\U$) and a weakly increasing sequence of integers $c$ of the same length, define the following statistics on pairs $(w,c)$
\begin{align*}
\Des_k(w,c) &= \{(i,j)\mid\text{$i<j$, $w_i>w_j$, and $c_j-c_i=k$}\}, \\
\inv_k(w,c) &= |\{(i,j)\mid\text{$i<j$, $w_i>w_j$, and $0<c_j-c_i<k$}\}|.
\end{align*}
These are called the \emph{$k$-descent set} and \emph{the $k$-inversion number of the pair $(w,c)$}.
Define the corresponding statistics on words  $\e{v}\in \U$:
\begin{align*}
\Desi{k}(\e{v}) &= \{(\e{v_i},\e{v_j})\mid\text{$i<j$ and $\e{v_i}-\e{v_j}=k$}\} \text{ (a multiset)}, \\
\invi{k}(\e{v}) &= |\{(i,j)\mid \text{$i<j$ and $0<\e{v_i}-\e{v_j}<k$}\}|.
\end{align*}

See Example~\ref{ex LLT}.
For a pair $(w,c)$ as above with  $w$  a permutation, define $\e{v}= (w,c)^{-1}$ by $\e{\e{v_i}}=c_j$ where $j$ is such that $w_j = i$.
Given $c$ as above and a  $k$-descent set  $D$, let  $(c,D)^{-1}$ denote the multiset  $\{(c_j,c_i) \mid (i,j) \in D\}$.
One checks that
\begin{align}\label{e inv des}
\Desi{k}((w,c)^{-1}) = (c,\Des_k(w,c))^{-1}\quad \text{and} \quad
 \invi{k}((w,c)^{-1}) = \inv_k(w,c).
\end{align}

\begin{definition}[Assaf \cite{Sami}]
A \emph{$k$-ribbon word} is a pair $(w,c)$ consisting of a word $w$ and a weakly increasing sequence of integers $c$
of the same length such that if $c_i=c_{i+1}$, then there exist integers $h$ and $j$ such that
$(h,i),(i+1,j)\in\Des_k(w,c)$ and $(i,j),(h,i+1)\not\in\Des_k(w,c)$.
In other words, $c_h=c_i-k$ and $w_i<w_h\le w_{i+1}$ while $c_j=c_i+k$ and $w_i\le w_j<w_{i+1}$.
\end{definition}

We now relate  $k$-ribbon words to the algebra $\U/\Ilam{k}$.

Say that two words  $\e{v},\e{v'}$ of  $\U$ are \emph{$k$-equivalent at  $q=1$} if  $\e{v}= \qlam^a \e{v'}$ in the algebra  $\U/\Ilam{k}$ for some  $a$.  Define
\emph{$q=1$ $k$-equivalence classes} in the obvious way; we omit the $k$ from these definitions when it is clear.  Note that  $\e{v}$ and  $\e{v'}$ are equivalent at  $q=1$ if and only if they are equal in the algebra  $\U/\Ilam{k}|_{\qlam=1}$: given that $\U/\Ilam{k}$  is defined by binomial and monomial relations, the only way this can fail is if $\e{v}\ne 0$ in  $\U/\Ilam{k}|_{\qlam=1}$ but  $f \e{v} = 0$ in  $\U/\Ilam{k}$ for some nonzero $f  \in \field$ (which must satisfy  $f|_{\qlam=1}=0$).  However, if this occurs then $\bm{\delta} \circ_\mathbf{0} \e{v} = \bm{\gamma}$ for some $k$-tuples  of partitions $\bm{\delta}, \bm{\gamma}$. Therefore, letting $\nu,\mu$ be shapes with  empty  $k$-cores such that  $\quot_k(\nu) = \bm{\delta}$,  $\quot_k(\mu)=\bm{\gamma}$, we have $\nu \cdot f\e{v} = f \qlam^a \mu$ for some  $a \in \ZZ$, hence  $f \e{v} \ne 0$ in  $\U/\Ilam{k}$.
It follows that  $\U/\Ilam{k}$ is a free  $\field$-module and the nonzero $q=1$  equivalence classes form an  $\field$-basis for  $\U/\Ilam{k}$.

\begin{propdef}\label{pd nonzero words}
The following are equivalent for a word $\e{v}\in\U$:
\begin{list}{\emph{(\roman{ctr})}}{\usecounter{ctr} \setlength{\itemsep}{2pt} \setlength{\topsep}{3pt}}
\item $\e{v}\ne 0$ in $\U/\Ilam{k}$,
\item $\e{v}\ne 0$ in $\U/\Ilam{k}|_{\qlam=1}$,
\item for every pair $i<j$ such that $\e{v_i} = \e{v_j}$, there exists  $s,t$ such that $i<s<t<j$ and $\{\e{v_s},\e{v_t}\} = \{\e{v_i-k},\e{v_i+k}\}$,
\item $\e{v}= (w,c)^{-1}$ for some $k$-ribbon word $(w,c)$ with  $w$ a permutation.
\end{list}
If $\e{v}$ satisfies these properties, then we say it is a \emph{nonzero $k$-word}.
\end{propdef}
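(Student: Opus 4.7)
The plan is to prove the chain $(i) \Leftrightarrow (ii) \Leftrightarrow (iii) \Leftrightarrow (iv)$. The equivalence $(i) \Leftrightarrow (ii)$ is immediate from the paragraph preceding the statement: it is already established there that $\U/\Ilam{k}$ is a free $\field$-module whose basis consists of the nonzero $q=1$ equivalence classes of monomials, so a monomial vanishes in $\U/\Ilam{k}$ if and only if it vanishes in $\U/\Ilam{k}|_{\qlam=1}$.

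To handle $(ii) \Leftrightarrow (iii)$, first observe that at $\qlam=1$ the relations \eqref{e far commute} and \eqref{e q commute} coalesce into $u_iu_j=u_ju_i$ whenever $|i-j|\notin\{0,k\}$, so any two generators in distinct residue classes modulo $k$ commute. One then identifies $\U/\Ilam{k}|_{\qlam=1}$ with the $k$-fold tensor product of nil-Temperley-Lieb algebras (one per residue class) via $u_{kn+r}\leftrightarrow s_n$ in the $r$-th factor. Under this identification a word $\e{v}$ becomes the tensor of its residue subwords $\e{v}^{(r)}$ (where $\e{v}^{(r)}$ retains only the letters of $\e{v}$ congruent to $r$ mod $k$) and is nonzero iff each $\e{v}^{(r)}$ is nonzero in its nil-Temperley-Lieb factor. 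The crucial auxiliary lemma is the standard characterization of nonzero words in nil-Temperley-Lieb (cf.\ \textsection\ref{ss The nil-Temperley-Lieb algebra}): such a word is nonzero iff between every pair of consecutive occurrences of a generator $s_m$ both $s_{m-1}$ and $s_{m+1}$ appear. The forward direction is a short computation (commute out all letters commuting with $s_m$, then apply $s_m^2=0$, $s_ms_{m\pm 1}s_m=0$, and $s_{m\pm 1}^2=0$ to the resulting $s_m s_{m\pm 1}^j s_m$); the backward direction follows from the faithful action on $\field\mathcal{P}$ or from the identification of nonzero monomials with reduced words of $321$-avoiding permutations. Translating the lemma across the tensor decomposition, and noting that the ``consecutive pair'' form of the condition is equivalent to (iii) for arbitrary pairs (any witness for a consecutive subpair also witnesses a surrounding pair), produces exactly (iii).

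For $(iii) \Leftrightarrow (iv)$: given $\e{v}$, let $c$ be its weakly increasing rearrangement and let $w$ be the stable sorting permutation, so $\e{v}=(w,c)^{-1}$ and $w_i<w_{i+1}$ whenever $c_i=c_{i+1}$. I would check that with this choice the $k$-ribbon word condition at an index $i$ with $c_i=c_{i+1}$ is exactly condition (iii) applied to the pair of positions $w_i<w_{i+1}$ in $\e{v}$: the indices $h$ and $\ell$ of the definition correspond under $w$ to positions $s$ and $t$ strictly between $w_i$ and $w_{i+1}$ carrying letters $c_i-k$ and $c_i+k$, and the strict/weak $k$-ribbon inequalities force the stable sort convention and fall out automatically. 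For the converse, given a $k$-ribbon word $(w,c)$ and an arbitrary pair $i<j$ in $\e{v}$ with $\e{v_i}=\e{v_j}$, locate the corresponding indices $a,b$ in $c$ with $w_a=i$, $w_b=j$; stability on the run of equal entries of $c$ forces $a<b$, so the $k$-ribbon condition applied at $(a,a+1)$ supplies a position in $\e{v}$ of value $\e{v_i}-k$ lying strictly between $i$ and $j$ (the upper bound uses $w_{a+1}\le w_b=j$ together with the fact that the witnessing letter differs from $c_b$), and similarly for $\e{v_i}+k$.

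The main obstacle I anticipate is in $(ii) \Leftrightarrow (iii)$: making the tensor decomposition of $\U/\Ilam{k}|_{\qlam=1}$ rigorous and, more importantly, carefully establishing the ``both neighbors appear between consecutive repeats'' characterization of nonzero nil-Temperley-Lieb words. Once that characterization is in hand, the remaining equivalences are combinatorial bookkeeping that follows from the setup already in place.
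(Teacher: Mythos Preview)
Your proposal is correct and follows essentially the same route as the paper's proof, only with considerably more detail. The paper dispatches $(i)\Leftrightarrow(ii)$ by the preceding discussion, reduces $(ii)\Leftrightarrow(iii)$ to the known bijection between the monomial basis of the nil-Temperley-Lieb algebra and skew shapes with contents (which encodes exactly your ``both neighbors appear between consecutive repeats'' criterion), and declares $(iii)\Leftrightarrow(iv)$ a straightforward unraveling of definitions---all of which you have made explicit.
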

\begin{proof}
By the discussion above, (i) and (ii) are equivalent.
The equivalence of (ii) and (iii) follows from the bijection between the monomial basis of the nil-Temperley-Lieb algebra and skew shapes with contents (see \textsection\ref{ss The nil-Temperley-Lieb algebra}).  The equivalence of (iii) and (iv) is a straightforward unraveling of definitions.
\end{proof}

We record the following immediate consequence of the equivalence of (i) and (iii) for later use.
\begin{corollary}
\label{c temperley lieb}
Let $\e{v}= \e{v_1 \cdots v_t} \in \U$ be a word such that  $\e{v_1} = \e{v_t}$. If either $\e{v}$ does not contain $\e{v_1 + k}$ or $\e{v}$ does not contain $\e{v_1 - k}$, then $\e{v}= 0$ in  $\U/\Ilam{k}$.
\end{corollary}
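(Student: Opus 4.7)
The plan is to derive this as an immediate consequence of the equivalence of (i) and (iii) in Proposition-Definition~\ref{pd nonzero words}. Recall that (iii) demands of a nonzero word $\e{v}$ the following: whenever two equal letters appear at positions $i<j$, there must exist intermediate positions $i<s<r<j$ with $\{\e{v_s},\e{v_r}\}=\{\e{v_i}-k,\,\e{v_i}+k\}$ (I write $r$ in place of the index called $t$ in the proposition to avoid collision with the length of $\e{v}$ in the corollary). If this condition fails for even one pair of repeated letters, then by the contrapositive of (iii)$\Rightarrow$(i) we conclude $\e{v}=0$ in $\U/\Ilam{k}$.

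I will apply this observation with $(i,j)=(1,t)$, which is a genuine repeat pair by the hypothesis $\e{v_1}=\e{v_t}$. For (iii) to be satisfied at this pair, both values $\e{v_1}+k$ and $\e{v_1}-k$ would have to occur strictly between positions $1$ and $t$, and in particular both would have to appear as letters of $\e{v}$ somewhere. Since by assumption at least one of these two values is entirely absent from $\e{v}$, condition (iii) fails at $(1,t)$, whence $\e{v}=0$ in $\U/\Ilam{k}$.

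I do not anticipate any real obstacle: the substantive work -- namely the equivalence of (i), (ii), (iii), and (iv), together with the bijection between words in the nil-Temperley-Lieb algebra and skew shapes with contents recalled in \textsection\ref{ss The nil-Temperley-Lieb algebra} -- has already been carried out in Proposition-Definition~\ref{pd nonzero words}, and the corollary is essentially a one-line specialization. An alternative route, arguing directly from the defining relations of $\U/\Ilam{k}$ by using far-commutation \eqref{e far commute} and $q$-commutation \eqref{e q commute} to transport $\e{v_t}$ leftward into contact with $\e{v_1}$ and then triggering one of the monomial relations $u_i^2=0$, $u_{i+k}u_iu_{i+k}=0$, or $u_iu_{i+k}u_i=0$, would require precisely the case analysis that the proposition has already subsumed, so the path through (iii) is the clearly efficient one.
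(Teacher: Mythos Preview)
Your proposal is correct and matches the paper's own reasoning exactly: the paper introduces this corollary as ``the following immediate consequence of the equivalence of (i) and (iii)'' in Proposition-Definition~\ref{pd nonzero words}, and your argument spells out precisely this specialization at the pair $(1,t)$.
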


A \emph{$k$-tuple of skew shapes with contents} is a $k$-tuple $\bm{\beta} =(\beta^{(0)},\dots,\beta^{(k-1)})$ such that each  $\beta^{(i)}$ is a skew shape with contents.
The \emph{shifted content} of a cell  $z$ of  $\bm{\beta}$ is
\[\tilde{c}(z) = k\cdot c(z)+i,\]
when $z \in \beta^{(i)}$ and where $c(z)$ is the usual content of $z$ regarded as a cell of $\beta^{(i)}$.
The \emph{content vector} of $\bm{\beta}$ is the weakly increasing sequence of integers consisting of the shifted contents of all the cells of $\bm{\beta}$ (with repetition).

\begin{definition}
\label{d Wi}
Define the following sets of words:
\begin{itemize}
\item $\WRib_k(c,D) = $ the set of $k$-ribbon words  $(w,c)$ with $k$-descent set $D$.
\item $\Wi{k}(c,D') = $ the set of nonzero $k$-words $\e{v}$ such that sorting $\e{v}$ in weakly increasing order yields $c$ and $\Desi{k}(\e{v}) = D'$.
\item $\Wi{k}(\bm{\beta})  = \{\e{v}\in \U \mid \bm{\delta} \circ_\mathbf{0}\e{v}=  \bm{\gamma}\}$ for any $k$-tuple $\bm{\beta} = \bm{\gamma}/\bm{\delta}$ of skew shapes with contents.
\end{itemize}
\end{definition}


For a word  $v$, the \emph{standardization} of  $v$, denoted $v^{\stand}$,
is the permutation obtained from $v$ by first relabeling, from left to right, the occurrences of the smallest letter in $v$ by  $1,\ldots,t$, then relabeling the occurrences of the next smallest letter of $v$ by $t+1,\ldots,t+t'$, etc.
\begin{example}\label{ex LLT}
Let
$\bm{\beta}$ be the 3-tuple $\left(\partition{&~\\&},\partition{&~&~\\&~&~},\partition{&&~\\&~&~}\right)$ of skew shapes with contents.  Its shifted contents are
\[{\tiny\left(\tableau{&3\\&},\tableau{&4&7\\&1&4},\tableau{&&8\\&2&5}\right)}.\]
The content vector $c$ of $\bm{\beta}$, a word $\e{v}\in \Wi{3}(\bm{\beta})$, and $w=(\e{v}^\stand)^{-1}$ (note $\e{v} =(w,c)^{-1}$):
\begin{alignat*}{2}
&w &&= 46715832 \\
&c &&= 12344578 \\
&\e{v}&&= \e{48714235}.
\end{alignat*}
The statistics defined above, on the pair $(w,c)$ and on $\e{v}$:
\begin{alignat*}{2}
&\Des_3(w,c) &&= \{(1,4),(5,7),(6,8)\} \\
&\Desi{3}(\e{v}) &&= \{(4,1),(7,4),(8,5)\} \\
&\inv_3(w,c) &&= \invi{3}(\e{v})=6.
\end{alignat*}
We also have $(w,c)\in\WRib_3(c,D)$ for $D=\Des_3(w,c)$, and $\Wi{3}(\bm{\beta}) = \Wi{3}(c,D')$ for $D'=\Desi{3}(\e{v})$.
\end{example}

\begin{proposition}\label{p equivalence classes}
Let $\bm{\beta}$ be a  $k$-tuple of skew shapes with contents and let $c$ be its content vector.
Let  $\tilde{c}^\stand$ be the function on the cells of $\bm{\beta}$ that assigns to the cells of shifted content  $i$ the letters of  $c^\stand$ that relabel the $i$'s in $c$, increasing in the  $\searr$ direction.
\begin{list}{\emph{(\roman{ctr})}}{\usecounter{ctr} \setlength{\itemsep}{2pt} \setlength{\topsep}{3pt}}
\item The rule  $\bm{\beta} \mapsto \Wi{k}(\bm{\beta})$ defines a bijection between  $k$-tuples of skew shapes with contents and nonzero  $q=1$ equivalence classes.
\item Suppose  $D'$ is determined from $\bm{\beta}$ as follows:  the number of pairs of cells  $z,z'$ in  $\bm{\beta}$  such that $\tilde{c}(z)=\tilde{c}(z')+k$ and  $z <_\searrsub z'$ is the multiplicity of  $(\tilde{c}(z),\tilde{c}(z'))$ in  $D'$.
    Then  $\Wi{k}(\bm{\beta}) = \Wi{k}(c,D')$.
\item Suppose  $D$ is such that  $\WRib_k(c,D)$ is nonempty and let $D' = (c,D)^{-1}$.  Then we have the following bijection
\[\{(w,c) \in \WRib_k(c,D) \mid \text{$w$ a permutation}\}\to\Wi{k}(c,D'), ~(w,c) \mapsto (w,c)^{-1},\]
with inverse given by  $((\e{v}^\stand)^{-1},c)\mapsfrom\e{v}$.
\item If $\bm{\beta}$ and  $D'$ are related as in (ii), and $D' = (c,D)^{-1}$ as in (iii), then   $D$ is determined from $\bm{\beta}$ as follows:
$D$ is the set of pairs  $(\tilde{c}^\stand(z'),\tilde{c}^\stand(z))$ with $z,z'$ cells of $\bm{\beta}$ such that $\tilde{c}(z)=\tilde{c}(z')+k$ and  $z <_\searrsub z'$.
\end{list}
\end{proposition}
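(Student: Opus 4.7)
The plan is to prove the four parts in order, building on the algebraic structure of $\U/\Ilam{k}$ at $\qlam = 1$ described in \textsection\ref{ss Lams algebra}.

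For (i), I would use that $\U/\Ilam{k}|_{\qlam=1}$ is the $k$-fold tensor product over $\QQ$ of the nil-Temperley-Lieb algebra (per \textsection\ref{ss Lams algebra}), so the action $\circ_\mathbf{0}$ on $k$-tuples of partitions factors as independent nil-Temperley-Lieb actions on the components. The bijection of \textsection\ref{ss The nil-Temperley-Lieb algebra} between monomials of the nil-Temperley-Lieb algebra and skew shapes with contents then lifts componentwise, and nonzero monomials correspond to nonzero $\qlam = 1$ equivalence classes by the discussion preceding Proposition-Definition \ref{pd nonzero words}. This yields the bijection in (i).

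For (ii), my approach is to analyze how the multiset $\Desi{k}(\e{v})$ depends on $\e{v} \in \Wi{k}(\bm{\beta})$ and show it equals the geometric multiset $D'$ described in the statement. Any $\e{v} \in \Wi{k}(\bm{\beta})$ reads the cells of $\bm{\beta}$ in some order (outputting shifted contents) that restricts to a linear extension of $<_\searrsub$ on each component, and all such readings arise. A pair contributing to $\Desi{k}(\e{v})$ consists of letters $\tilde{c}(z), \tilde{c}(z')$ with $\tilde{c}(z) - \tilde{c}(z') = k$, which by arithmetic modulo $k$ places $z, z'$ in the same component with $c(z) - c(z') = 1$. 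The technical heart of the argument is to verify that two cells with $c(z) - c(z') = 1$ in the same skew shape are always $<_\searrsub$-comparable: examining row and column differences $(a,b) = (z_{\text{row}} - z'_{\text{row}}, z_{\text{col}} - z'_{\text{col}})$, the condition $b - a = 1$ forces $a$ and $b$ to have the same sign, excluding the incomparable configurations. Then in any linear extension of $<_\searrsub$, the pair's order matches its $\searr$-order, so it contributes to $\Desi{k}(\e{v})$ exactly when $z <_\searrsub z'$. This identifies $\Desi{k}(\e{v})$ with $D'$ and hence $\Wi{k}(\bm{\beta}) \subseteq \Wi{k}(c, D')$; equality follows from (i) since nonzero $\qlam = 1$ equivalence classes are in bijection with $k$-tuples of skew shapes with contents.

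For (iii), I would verify by direct calculation that $\e{v} = (w, c)^{-1}$ is a nonzero $k$-word via Proposition-Definition \ref{pd nonzero words}(iv), sorts to $c$ by construction, and satisfies $\Desi{k}(\e{v}) = (c, D)^{-1} = D'$ by the identities \eqref{e inv des}. The reverse map $\e{v} \mapsto ((\e{v}^\stand)^{-1}, c)$ is well-defined by running \eqref{e inv des} in reverse, and the two maps invert each other by direct inspection. Finally, (iv) follows by combining (ii) and (iii) and tracing how standardization relabels repeated letters of $c$: the labeling $\tilde{c}^\stand$ is exactly what translates the geometric description of $D'$ from (ii) into the permutation coordinates $w$ supplied by (iii), yielding the claimed formula for $D$. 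The main obstacle is the comparability claim in (ii)—that $c(z) - c(z') = 1$ in a skew shape forces $<_\searrsub$-comparability of $z$ and $z'$. Everything else is largely unraveling definitions and the identities \eqref{e inv des}.
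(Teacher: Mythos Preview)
Your outline matches the paper's argument for (i), (iii), and (iv) essentially verbatim: reduce (i) to the $k=1$ nil-Temperley-Lieb bijection via the tensor product description of $\U/\Ilam{k}|_{\qlam=1}$, and derive (iii), (iv) from Proposition-Definition~\ref{pd nonzero words} and the identities \eqref{e inv des}.

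For (ii) there is a small but genuine gap. You correctly show $\Wi{k}(\bm{\beta}) \subseteq \Wi{k}(c,D')$ by computing $\Desi{k}(\e{v})$ geometrically, and your comparability lemma (cells of adjacent contents in a skew shape are $<_\searrsub$-comparable) is exactly the right technical ingredient. But the sentence ``equality follows from (i)'' does not close the argument: part (i) tells you $\Wi{k}(c,D')$ is a disjoint union of classes $\Wi{k}(\bm{\beta}')$, and your own computation applied to any such $\bm{\beta}'$ shows it has the same pair $(c,D')$---so what you still need is that $\bm{\beta} \mapsto (c,D')$ is \emph{injective}. The paper states this explicitly as the observation that a skew shape with contents is determined by its content multiset together with, for each pair of adjacent diagonals, the number of $<_\searrsub$-pairs between them. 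Your comparability claim is the key step toward that observation (it guarantees that the inversion count between adjacent diagonals records the full interlacing), so the fix is just to make this reconstruction of $\bm{\beta}$ from $(c,D')$ explicit rather than appealing to (i).
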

If $\bm{\beta}$ is as in Example \ref{ex LLT}, then  $c^\stand = 12345678$ and  $\tilde{c}^\stand$ is given by
\[{\tiny\left(\tableau{&3\\&},\tableau{&4&7\\&1&5},\tableau{&&8\\&2&6}\right)}.\]
\begin{proof}
By the discussion before Proposition-Definition \ref{pd nonzero words}, (i) reduces to the case  $k=1$. The case  $k=1$ follows from \cite{BJS}.
For (ii), the relations of  $\U/\Ilam{k}$ preserve the statistic $\Desi{k}(\e{v})$ on nonzero  $k$-words, hence  $\Wi{k}(c,D')$ is a union of $q=1$ equivalence classes.  That  $\Wi{k}(c,D')$ is the single equivalence class  $\Wi{k}(\bm{\beta})$ follows from the observation that a skew shape with contents is determined by its multiset of contents and, for each of its pairs of diagonals $G, G'$ with the content of  $G$ 1 more than that of  $G'$, the number of pairs of cells  $(z,z')$,  $z \in G$, $z' \in G'$ such that  $z <_\searrsub z'$.
Statements (iii) and (iv) are straightforward from definitions, Proposition-Definition \ref{pd nonzero words}, and \eqref{e inv des}.
\end{proof}

\subsection{LLT polynomials}
\label{ss LLT}
LLT polynomials are certain  $q$-analogs of products of skew Schur functions, first defined by Lascoux, Leclerc, and Thibon in \cite{LLT}.
There are two versions of LLT polynomials (which we distinguish following the notation of \cite{GH}): the combinatorial LLT polynomials of \cite{LLT} defined using spin, and the new variant combinatorial LLT polynomials of \cite{HHLRU} defined using inversion numbers (we called these LLT polynomials in the introduction). 
Although Lam's algebra is well suited to studying the former, we prefer to work with the latter because they can be expressed entirely in terms of words, and because inversion numbers are easier to calculate than spin.

Let  $\e{v} = \e{v_1 \cdots v_t}$ be a word.
We write $\Des(\e{v}) := \{i \in [t-1] \mid \e{v_i} > \e{v_{i+1}}\}$ for the \emph{descent set} of $\e{v}$.
Let
\[Q_{\Des(\e{v})}(\mathbf{x}) = \sum_{\substack{1 \le i_1 \le \, \cdots \, \le i_t\\j \in \Des(\e{v}) \implies i_j < i_{j+1} }} x_{i_1}\cdots x_{i_t}\]
be Gessel's \emph{fundamental quasisymmetric function} \cite{GesselPPartition} in the commuting variables $x_1,x_2,\ldots$.

The result \cite[Corollary 4.3]{Sami}, which we take here as a definition, expresses the new variant combinatorial LLT polynomials of \cite{HHLRU} in terms of  $k$-ribbon words.
\begin{definition}\label{d new variant LLT}
Let $\bm{\beta}$ be a  $k$-tuple of skew shapes with contents, and let $c,D$ be the corresponding content vector
and $k$-descent set from Proposition~\ref{p equivalence classes} (iv).
The \emph{new variant combinatorial LLT polynomials} are the generating functions
\[\mathcal{G}_{\bm{\beta}}(\mathbf{x};q) = \sum_{\substack{(w,c)\in\WRib_k(c,D)\\ \text{$w$ a permutation}}}q^{\inv_k(w,c)}Q_{\Des(w^{-1})}(\mathbf{x}).\]
\end{definition}

For this paper, we have found the following expressions for LLT polynomials to be the most useful.  The second involves only words and the statistics  $\invi{k}$ and  $\Desi{k}$.  Also, as will be seen in \textsection\ref{ss qLR coefs}, these expressions allow for the application of the machinery of \cite{FG}.
\begin{proposition}\label{p assafi}
Let  $\bm{\beta}$ be a  $k$-tuple of skew shapes with contents and let $c,D'$ be determined from  $\bm{\beta}$ as in Proposition \ref{p equivalence classes} (ii).
Then
\[\mathcal{G}_{\bm{\beta}}(\mathbf{x};q) = \sum_{\e{v}\in\Wi{k}(\bm{\beta})}q^{\invi{k}(\e{v})}Q_{\Des(\e{v})}(\mathbf{x})  =  \sum_{\e{v} \in\Wi{k}(c,D')}q^{\invi{k}(\e{v})}Q_{\Des(\e{v})}(\mathbf{x}).\]
\end{proposition}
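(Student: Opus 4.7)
The plan is to deduce the proposition directly from Assaf's definition by translating the sum via the bijection and dictionary supplied by Proposition~\ref{p equivalence classes}. First, the second equality $\sum_{\e{v}\in\Wi{k}(\bm{\beta})} = \sum_{\e{v}\in\Wi{k}(c,D')}$ is immediate: part (ii) of Proposition~\ref{p equivalence classes} says precisely that $\Wi{k}(\bm{\beta}) = \Wi{k}(c,D')$ for the $D'$ determined from $\bm{\beta}$ as in the statement.

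The substantive step is to match Definition~\ref{d new variant LLT} with the middle expression. Let $D$ be the $k$-descent set attached to $\bm{\beta}$ by Proposition~\ref{p equivalence classes}(iv), so that $D' = (c,D)^{-1}$. By part (iii) of Proposition~\ref{p equivalence classes}, the map $(w,c)\mapsto (w,c)^{-1}$ is a bijection from the set of permutation $k$-ribbon words in $\WRib_k(c,D)$ onto $\Wi{k}(c,D')$, with inverse $\e{v}\mapsto ((\e{v}^{\stand})^{-1},c)$. Under this bijection I would verify the two statistics match term by term. The $q$-weight is handled by \eqref{e inv des}, which gives $\inv_k(w,c) = \invi{k}(\e{v})$. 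For the quasisymmetric factor, since $w = (\e{v}^{\stand})^{-1}$ we have $w^{-1} = \e{v}^{\stand}$, so $Q_{\Des(w^{-1})}(\mathbf{x}) = Q_{\Des(\e{v}^{\stand})}(\mathbf{x})$; and the elementary observation that standardization preserves descents (a weak ascent $\e{v_i}\le \e{v_{i+1}}$ stays a strict ascent in $\e{v}^{\stand}$, because equal letters get relabeled left to right in strictly increasing order, while a strict descent is obviously preserved) gives $\Des(\e{v}^{\stand}) = \Des(\e{v})$.

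Putting these pieces together, Assaf's defining sum is mapped term by term onto $\sum_{\e{v}\in\Wi{k}(c,D')} q^{\invi{k}(\e{v})} Q_{\Des(\e{v})}(\mathbf{x})$, which is the desired identity. There is really no obstacle here: the proof is a compilation of Proposition~\ref{p equivalence classes} together with the relation \eqref{e inv des}, and the only independent verification required is the preservation of descents under standardization, which is a standard fact.
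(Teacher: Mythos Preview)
Your proposal is correct and follows essentially the same approach as the paper: the paper's proof simply cites Definition~\ref{d new variant LLT}, Proposition~\ref{p equivalence classes}, \eqref{e inv des}, and the fact $\Des(\e{v}^{\stand}) = \Des(\e{v})$, which are exactly the ingredients you invoke and assemble.
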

\begin{proof}
This follows from Definition \ref{d new variant LLT}, Proposition \ref{p equivalence classes}, \eqref{e inv des}, and the fact
$\Des(\e{v}^\stand) = \Des(\e{v})$.
\end{proof}

\begin{remark}\label{r spin LLT}
The \emph{combinatorial LLT polynomials} $G^{(k)}_{\mu/\nu}(\mathbf{x};\qlam)$ are the  $\qlam$-generating functions over  $k$-ribbon tableaux weighted by spin
(\cite{LT00, LamRibbon, GH} use spin, whereas \cite{LLT} uses cospin).
We refer to \cite{GH} for their precise definition.  Lam shows \cite{LamRibbon} that
\[G^{(k)}_{\mu/\nu}(\mathbf{x};\qlam) = \langle\nu \cdot \Omega(\mathbf{x}, \mathbf{u}),\mu\rangle,\]
where $\langle\cdot ,\cdot  \rangle$ is the symmetric bilinear form on $\field \mathcal{P}$ for which  $\mathcal{P}$ is an orthonormal basis,
and
\[ \Omega(\mathbf{x}, \mathbf{u}) = \prod_{j=1}^\infty \prod_{i=-\infty}^{\infty}(1-x_ju_i)^{-1}
= \sum_{\e{v}\in \U} Q_{\Des(\e{v})}(\mathbf{x})\e{v}
\]
is a noncommutative Cauchy product in which the  $x_j$ commute with the  $u_i$.

The two types of LLT polynomials are related as follows.
Suppose $\mu/\nu$ can be tiled by $k$-ribbons. Let
$\bm{\beta}$ be the  $k$-tuple of skew shapes with contents obtained from $\quot_k(\mu)/\quot_k(\nu) = (\gamma^{(0)}/\delta^{(0)},\dots, \gamma^{(k-1)}/\delta^{(k-1)})$ by translating each $\gamma^{(i)}/\delta^{(i)}$ east by $(c_i-i)/k$, where the $c_i$ are determined by the  $k$-core of  $\mu$ ($=\core_k(\nu)$) as in \textsection\ref{ss Lams algebra}.  Then  by \cite{HHLRU} (see e.g. \cite[Proposition 6.17]{GH}),
 there is an integer $e$ such that (recall $q=\qlam^{-2}$)
\[\mathcal{G}_{\bm{\beta}}(\mathbf{x};q) = \qlam^eG_{\mu/\nu}^{(k)}(\mathbf{x};\qlam).\]
\end{remark}

\section{Reading words for $\mathfrak{J}_\lambda(\mathbf{u})$ when $k=3$}
\label{s reading}
Here we introduce new kinds of tableaux and reading words that arose naturally in our efforts to write $\mathfrak{J}_\lambda(\mathbf{u})$ as a positive sum of monomials in $\U/\Ilam{3}$.
The main new feature of these objects is that they involve posets obtained from posets of diagrams by adding a small number of covering relations.
We are hopeful that this will become part of a more general theory that extends tools from tableaux combinatorics to posets more general than partition diagrams.

In Sections \ref{s reading} and \ref{s positive monomial}, we write $f\equiv g$ to mean that $f$ and $g$ are equal in $\U/\Ilam{k}$, when the value of $k$ is clear from context (typically it is 3 or arbitrary).
\subsection{Tableaux} \label{ss tableaux}
Let  $\theta$ be a diagram (see \textsection\ref{ss diagram}).
A \emph{tableau of shape $\theta$} is the diagram $\theta$ together with an integer in each of its cells.
The \emph{size} of a tableau $T$, denoted  $|T|$, is the number of cells of $T$, and $\sh(T)$ denotes the shape of $T$.
For a tableau $T$ and a set of cells $S$ such that $S\subseteq\sh(T)$,
$T_S$ denotes the subtableau of $T$ obtained by restricting $T$ to the diagram $S$.
If $z$ is a cell of $T$, then $\e{T_z}$ denotes the entry of $T$ in $z$.
When it is clear, we will  occasionally identify a tableau entry with the cell containing it.

If $T$ is a tableau,  $\e{m}$ a letter, and  $z = (r,c)$ is a cell not belonging to $\sh(T)$, then $T \sqcup {\tiny \tableau{\e{m}}}_{\, r,c}$ denotes the result of adding the cell $z$ to $T$  and filling it with $\e{m}$.

A \emph{standard Young tableau} (SYT) is a tableau  $T$ of partition shape filled with the entries $1,2,\dots,|T|$ such that entries increase from north to south in each column and from west to east in each row.  The set of standard Young tableaux of shape $\lambda$ is denoted $\SYT_\lambda$.

\subsection{Restricted shapes and restricted square strict tableaux}

\begin{definition}
A \emph{restricted shape} is a lower order ideal of a partition diagram for the order $<_\nearrsub$.
We will  typically specify a restricted shape as follows:
for any weak composition  $\alpha= (\alpha_1,\ldots, \alpha_l)$, let  $\alpha'$ denote the diagram  $\{(r,c) \mid c \in [l], \ r \in [\alpha_c] \}$.
Now let $\lambda = (\lambda_1,\dots,\lambda_l)$ be a partition and $\alpha = (\alpha_1,\dots,\alpha_l)$ a weak  composition such that $0 \le \alpha_1 \le \cdots \le \alpha_{j'}$, $\alpha_1 < \lambda_1,\ \alpha_2 < \lambda_2,  \ldots, \alpha_{j'} < \lambda_{j'}$, and
 $\alpha_{j'+1} = \lambda_{j'+1}, \ldots, \alpha_{l} = \lambda_{l}$  for some  $j' \in \{0,1,\dots,l\}$.
Then the set difference of $\lambda'$ by $\alpha'$, denoted $\lambda' \setminus \alpha'$, is a restricted shape and any restricted shape can be written in this way.
\end{definition}

Note that, just as for skew shapes, different pairs $\lambda, \alpha$ may define the same restricted shape $\lambda' \setminus \alpha'$.
An example of a restricted shape is
\[ {\footnotesize (65544444221)' \setminus (01222223221)'} \ =\  \partition{~\\~&~\\~&~&~&~&~&~&~\\~&~&~&~&~&~&~&~\\~&~&~\\~}. \qquad\]

\begin{definition}\label{d RSST}
A \emph{restricted tableau} is a tableau whose shape is a restricted shape.
A \emph{restricted square strict tableau} (RSST) is a restricted tableau such that entries
\begin{itemize}
\item strictly increase from north to south in each column,
\item strictly increase from west to east in each row,
\item satisfy $R_z +3\le R_{z'}$ whenever $z <_\searrsub z'$ and  $z, z'$ do not lie in the same row or column.
\end{itemize}
\end{definition}

For example,
\begin{align*}
\tiny
\tableau
{1\\
3&4\\
5&6&7&8&9&10\\
7&8&9&10&11&12&13\\
8&10&12
\\11}
\end{align*}
is an RSST of shape $(6554444)' \setminus (0122223)'$.

\subsection{Square respecting reading words}
\label{ss Square respecting reading words}


\begin{definition}\label{d arrows}
An \emph{arrow square}  $S$ of an RSST $R$ is a subtableau of $R$ such that $\sh(S)$ is the intersection of  $\sh(R)$ with a  $2\times2$ square,
and  $S$ is of the form
\setlength{\cellsize}{3.4ex}
\[\tiny
\tableau{{\color{black}\put(.8,-1.6){\vector(-1,1){.6}}}\put(-.1,-0.96){$a$}&b\\a\text{+}2&a\text{+}3}\qquad
\tableau{{\color{black}\put(.8,-1.6){\vector(-1,1){.6}}}\put(-.1,-0.96){$a$}\\a\text{+}2&a\text{+}3}\qquad\qquad
\tableau{{\color{black}\put(.25,-1.1){\vector(1,-1){.6}}}\put(-.1,-1){$a$}&b\\a\text{+}1&a\text{+}3}\qquad
\tableau{{\color{black}\put(.25,-1.1){\vector(1,-1){.6}}}\put(-.1,-1){$a$}\\a\text{+}1&a\text{+}3},\]
with $b \in \{a+1,a+2\}$.
The first two of these are called \emph{$\nwarr$ arrow squares} and the last two are \emph{$\searr$ arrow squares}. Also, \emph{an arrow} of  $R$ is a directed edge between the two cells of an arrow square,  as indicated in the picture; we think of the arrows of  $R$ as the edges of a directed graph with vertex set the cells of $R$.
\setlength{\cellsize}{2.1ex}
\end{definition}

\begin{definition}\label{d square reading word}
A \emph{reading word} of a tableau $R$ is a word $\e{w}$ consisting of the entries of $R$ such that for any two cells $z$ and $z'$ of $R$ such that $z <_{\nearrsub} z'$, $\e{R_z}$ appears to the left of $\e{R_{z'}}$ in $\e{w}$.

A \emph{square respecting reading word} $\e{w}$ of an RSST $R$ is a reading word of $R$ such that for each arrow of $R$, the tail of the arrow appears to the left of the head of the arrow in $\e{w}$.

There is no reason to prefer one square respecting reading word over another (see Theorem \ref{t square respecting connected}), but it is useful to have notation for one such word.
So we define, for any RSST  $R$, the word $\sqread(R)$ as follows: let $D^1,D^2,\dots,D^t$ be the diagonals of $R$, starting from the southwest. Let $\e{w^i}$ be the result of reading, in the $\nwarr$ direction, the entries of $D^i$ that are $\nwarr$ arrow tails followed by the remaining entries of $D^i$, read in $\searr$ direction. Set $\sqread(R)=\e{w^1}\e{w^2}\cdots \e{w^t}$.  It is easy to check that $\sqread(R)$ is a square respecting reading word of $R$.
\end{definition}

\begin{example}\label{ex square respecting}
Here is an RSST drawn with its arrows:
\setlength{\cellsize}{3.4ex}
\[R = \tiny\tableau{{\color{black}\put(.8,-1.6){\vector(-1,1){0.6}}}\put(-.1,-0.94){$1$}\\
{\color{black}\put(.8,-1.6){\vector(-1,1){0.6}}}\put(-.1,-0.94){$3$}&4\\
5&6&{\color{black}\put(.8,-1.6){\vector(-1,1){0.6}}}\put(-.24,-0.94){$16$}&17&25&{\color{black}\put(.25,-1.1){\vector(1,-1){.6}}}\put(-.24,-1){$31$}&{\color{black}\put(.25,-1.1){\vector(1,-1){.6}}}\put(-.24,-1){$33$}\\
{\color{black}\put(.8,-1.6){\vector(-1,1){0.6}}}\put(-.24,-0.94){$10$}&11&18&19&26&32&34&36\\
12&13&20\\
15}\]
\setlength{\cellsize}{2.1ex}
Of the following three reading words of $R$, the first two are square respecting, but the last is not.
\begin{align*}
& \e{15~12~13~10~5~11~20~6~3~18~19~4~1~16~17~26~25~32~31~34~33~36} = \sqread(R)\\
& \e{15~12~13~10~5~11~20~6~3~4~1~18~19~16~17~26~25~32~31~34~33~36}\quad\text{\footnotesize (square respecting)} \\
& \e{15~12~10~13~11~20~18~19~26~5~6~3~4~1~16~17~25~32~34~36~31~33}\quad\text{\footnotesize (not square respecting)}
\end{align*}
\end{example}

A \emph{$\nearr$-maximal cell} of a diagram is a cell that is maximal for the order  $<_\nearrsub$.
A \emph{nontail removable cell} of an RSST $R$ is a $\nearr$-maximal cell of $R$ that is not the tail of an arrow of $R$.
The last letter of a square respecting reading word of $R$ must lie in a nontail removable cell of $R$.

\begin{remark}
\label{r readind words}
If a tableau has no repeated letter, then its entries are naturally in bijection with the letters in any of its reading words.
We can force this to be true for tableaux with repeated entries
by adding subscripts to repeated entries and matching subscripts to the letters of its reading words.
We will abuse notation and do this without saying so explicitly.
So for example, if  $\e{w}$ is a reading word of  $R$ and  $z$ a cell of  $R$,
then by \emph{the letter $\e{R_z}$ of  $\e{w}$} we really mean the unique subscripted letter of $\e{w}$ that
is equal to the subscripted version of $\e{R_z}$.
Also, if  $\e{vw}$ is a  reading word of $R$, then \emph{the canonical subtableau} of  $R$ with
reading word  $\e{v}$ is  the subtableau of $R$  whose subscripted entries are the subscripted versions of the letters of $\e{v}$.
\end{remark}


\begin{remark}\label{r L diagram}
All of the results of
\textsection\ref{ss Square respecting reading words} and \textsection\ref{ss Combinatorics of restricted square strict tableaux} about  restricted shapes and RSST can be generalized to the following setting:
a \emph{$\corner$-diagram} is a diagram  $\theta$ such that for any two cells $z = (r,c),$ $z'=(r',c')$ of  $\theta$ such that  $z <_\searrsub z'$, the cell $(r',c)$  is also in $\theta$.
A  \emph{$\corner$-tableau} is a tableau whose shape is a $\corner$-diagram, and a \emph{square strict $\corner$-tableau} is a  $\corner$-tableau satisfying the three conditions from
Definition \ref{d RSST}.
An \emph{arrow square} of a $\corner$-tableau $R$ is a subtableau of $R$ whose shape is an interval $[z,z']$ for the poset $\sh(R)$ with the order $<_\searrsub$ such that $z$ and $z'$ do not lie in the same row or column and $R_{z'}-R_z=3$.

We have little use for this generality in this paper, so we do not discuss it further except to note one useful application.
Unlike restricted diagrams,  the set of   $\corner$-diagrams is closed under the operation of reflecting across a line in the direction  $\nearr$. Also, the set of square strict   $\corner$-tableaux with entries in  $[n]$ is closed under the operation of reflecting across a line in the direction  $\nearr$ and sending the entry  $a$ to  $n+1-a$.
Arrows of a square strict  $\corner$-tableau are reflected along with the cells.
\end{remark}

\subsection{Combinatorics of restricted square strict tableaux}
\label{ss Combinatorics of restricted square strict tableaux}

We assemble some basic results about RSST, square respecting reading words, and the images of these words in $\U/\Ilam{3}$. These are needed for the proof of the main theorem.
\begin{proposition}\label{p forbidden arrows}
The following configuration of arrows cannot occur in an RSST:
\setlength{\cellsize}{3ex}
{\tiny
\begin{alignat*}{6}
\tableau{{\color{black}\put(.8,-1.6){\vector(-1,1){.7}}}\\&{\color{black}\put(.2,-1.1){\vector(1,-1){.7}}}\put(-.1,-0.96){}\\&&~}
\end{alignat*}
}
\setlength{\cellsize}{2.1ex}
\end{proposition}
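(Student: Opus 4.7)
The plan is to obtain a direct contradiction from the two defining properties involved. Label the diagonally placed cells of the picture as $z_1, z_2, z_3$ running south-east, with the $\nwarr$ arrow joining $z_1$ to $z_2$ and the $\searr$ arrow joining $z_2$ to $z_3$. Set $a := R_{z_1}$. Inspecting the four shapes listed in Definition \ref{d arrows}, every arrow square---whether a full $2\times 2$ or an $L$-shape---contains the south-west corner of its enclosing $2\times 2$ block, always with a prescribed entry. Hence the arrow square on $\{z_1,z_2\}$ forces a cell $z_1'$ immediately south of $z_1$ with $R_{z_1'}=a+2$ and $R_{z_2}=a+3$, and the arrow square on $\{z_2,z_3\}$ forces a cell $z_2'$ immediately south of $z_2$ with $R_{z_2'}=(a+3)+1=a+4$.

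The proof is then finished by comparing these forced entries against the third RSST axiom in Definition \ref{d RSST}. The two cells $z_1'$ and $z_2'$ lie in distinct rows and distinct columns with $z_1' <_{\searrsub} z_2'$, so the axiom demands $R_{z_1'}+3 \le R_{z_2'}$, that is $5 \le 4$. This contradiction rules out the configuration. The only point that requires care is the case analysis in the first step, verifying that the south-west corner of each arrow square is present with its prescribed entry; after that the diagonal strict-by-$3$ condition closes the argument, so I do not expect any genuine obstacle.
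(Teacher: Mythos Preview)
Your proof is correct and is essentially the same as the paper's. The paper labels its $z_1,z_2$ as the two south-west corner cells that you call $z_1',z_2'$, computes $R_{z_1}=R_{z_2}-2$, and invokes the diagonal strict-by-$3$ axiom for the contradiction---exactly your argument with different notation.
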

\setlength{\cellsize}{3ex}
\begin{proof}
Let  $z_1,z_2$ be the two cells of  $R$ as shown.
\begin{alignat*}{6}
{\tiny\tableau{{\color{black}\put(.8,-1.6){\vector(-1,1){.7}}}\\z_1&{\color{black}\put(.2,-1.1){\vector(1,-1){.7}}}\put(-.1,-0.96){}\\&z_2&~}}
\end{alignat*}
If this configuration occurs, then $R_{z_1} = R_{z_2}-2$, contradicting the  definition of an RSST.
\end{proof}
\setlength{\cellsize}{2.1ex}

\begin{lemma}
\label{l exist no tail corner}
Let  $R$ be an RSST  and $z$ a cell of  $R$ such that the only cells $>_\nearrsub z$ lie in the same column as $z$, and  $z$ is not the tail of a $\searr$ arrow.
Then  $R$  has a nontail removable cell that is weakly north of $z$.

Similarly, if $z$ is a cell of $R$ such that the only cells $>_\nearrsub z$ lie in the same row as $z$, and  $z$ is not the tail of a $\nwarr$ arrow,
then  $R$  has a nontail removable cell that is weakly east of $z$.
\end{lemma}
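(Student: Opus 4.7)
By the symmetry between the two claims, obtained by swapping the roles of rows and columns (and of $\nwarr$ and $\searr$), it suffices to prove the first statement; the second follows by the mirror argument. The plan is to induct on the column $c$ of $z$. Let $z^* = (r^*, c)$ denote the topmost cell of $R$ in the column of $z$.

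The first step is to verify that $z^*$ is $\nearr$-maximal and is not the tail of a $\searr$ arrow. For $\nearr$-maximality: any cell $(r', c') \in R$ with $(r', c') >_{\nearrsub} z^*$ has either $c' = c$ (ruled out by the topmost property of $z^*$) or $c' > c$; in the latter case $(r', c') >_{\nearrsub} z$, so the first hypothesis forces $c' = c$, a contradiction. For the non-$\searr$-tail claim: if $z^* = z$ this is the second hypothesis directly; otherwise $r^* < r$, and the hypothetical $\searr$ head at $(r^* + 1, c + 1)$ would be weakly north of $z$ yet strictly east of column $c$, again contradicting the first hypothesis. If $z^*$ is also not a $\nwarr$ arrow tail, then $z^*$ is the required nontail removable cell and we are done. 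Otherwise $z^*$ is the tail of a truncated $\nwarr$ arrow (the full form is impossible since $z^*$ is topmost in column $c$, forcing $(r^*-1, c) \notin R$), with head $(r^*-1, c-1) \in R$.

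For the reduction, let $\tilde z$ be the topmost cell of column $c-1$ and apply the induction hypothesis to $(R, \tilde z)$ at the strictly smaller column $c-1$. Since the truncated $\nwarr$ arrow gives $(r^*-1, c-1) \in R$, the cell $\tilde z$ lies strictly northwest of $z^*$; a parallel $\nearr$-maximality argument (again using the first hypothesis on $z$ to rule out cells in columns $>c$ with low row) shows $\tilde z$ is $\nearr$-maximal, so the first hypothesis for $\tilde z$ holds vacuously, and $\tilde z$ is weakly north of $z$. The main content of the step, and the one place where I expect any substantive (though still routine) obstacle, is verifying that $\tilde z$ is not a $\searr$ arrow tail: the only possible configuration is a truncated $\searr$ square at $\tilde z$ whose head is $z^*$, which would force the entry at $(r^*, c-1)$ to equal $R_{z^*}-2$, whereas the $\nwarr$ arrow square at $z^*$ forces this same entry to equal $R_{z^*}-1$, a contradiction. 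The induction hypothesis then produces a nontail removable cell weakly north of $\tilde z$, which is \emph{a fortiori} weakly north of $z$. The base case $c = 1$ is immediate: $z^*$ cannot be a $\nwarr$ arrow tail (no column $0$), and the argument of the second paragraph shows it is not a $\searr$ arrow tail either, so $z^*$ itself is nontail removable.
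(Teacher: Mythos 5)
Your proof of the first statement is correct, and it takes a genuinely different route from the paper's. The paper argues in one shot: it takes the northeasternmost diagonal $D$ of $R$ meeting the cells weakly north of $z$, observes that arrows only join consecutive cells of a diagonal, so the arrow graph on $D$ is contained in a path, checks via the hypotheses on $z$ that no arrow joins the set $D^N$ of cells of $D$ weakly north of $z$ to the rest of $D$, and concludes by counting that some cell of $D^N$ is not an arrow tail (such a cell is automatically $\nearr$-maximal). Your argument instead inducts on the column of $z$, hopping from the topmost cell $z^*$ of column $c$ to the topmost cell $\tilde z$ of column $c-1$ along a forced truncated $\nwarr$ arrow, and kills the possibility that $\tilde z$ is a $\searr$ tail by noting that the two truncated arrow squares would sit on the same three cells and force the entry at $(r^*,c-1)$ to be both $R_{z^*}-1$ and $R_{z^*}-2$. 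I checked the individual verifications ($\nearr$-maximality of $z^*$ and of $\tilde z$, the non-$\searr$-tail claims, the base case $c=1$) and they are sound; the price of your approach is length, the gain is that it locates the nontail removable cell explicitly, column by column.

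The one step that does not hold up as written is the opening symmetry claim. ``Swapping the roles of rows and columns'' (i.e.\ transposing) is not a symmetry here: transposition reverses the order $<_\nearrsub$, so it does not carry the hypotheses and conclusion of the first statement to those of the second; it does not interchange the two arrow types (the $\nwarr$ square with rows $(a,\,a{+}2)$ and $(a{+}2,\,a{+}3)$ is its own transpose and remains a $\nwarr$ square); and restricted shapes are not closed under transposition in the first place. The symmetry the paper actually uses (Remark \ref{r L diagram}) is reflection across a line in the $\nearr$ direction \emph{combined with} the entry complementation $a\mapsto n+1-a$, and even that reflection leaves the class of restricted shapes, which is exactly why the paper passes to square strict $\corner$-tableaux there. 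So to complete your proposal, either justify the reduction via Remark \ref{r L diagram} (which requires observing that your induction never uses the restricted-shape property and so applies to square strict $\corner$-tableaux), or, more simply, run the mirrored induction on the second statement directly: take the easternmost cell $z^*$ of the row of $z$, note it is $\nearr$-maximal and not a $\nwarr$ tail, and if it is a $\searr$ tail walk to the easternmost cell of the next row down, where the same entry contradiction on the shared truncated square rules out a $\nwarr$ tail; the walk moves strictly east and strictly south, so it terminates (induct, say, on the number of rows of $R$ weakly south of the current cell), producing a nontail removable cell weakly east of $z$. With that repair the argument is complete.
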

As an example of the second statement, if  $R$ is as in Example~\ref{ex square respecting} and  $z$ is the cell containing  $31$, then  there is exactly one nontail removable cell weakly east of  $z$, the cell containing  $36$.

\begin{proof}
We prove only the first statement.  The second then follows from Remark \ref{r L diagram}.
Let  $D$ be the  northeasternmost diagonal of   $R$ that has  nonempty intersection with the cells of  $R$ weakly north of  $z$.  Let $G$
be the directed graph with vertex set $D$ and edges given by the arrows of $R$.  Let  $D^N$ be the cells of  $D$ that are weakly  north of $z$.  Then  the underlying undirected graph of $G$  is contained in a path and there is no edge from  $D^N$  to vertices not in $D^N$.   Hence $D^N$  contains a vertex that is not an arrow tail and hence is a nontail removable cell of  $R$.
\end{proof}

\begin{lemma}\label{l small letter end}
Let  $R$ be an RSST  and $z$ a cell of  $R$ such that the only cells $>_\nearrsub z$ lie in the same column as $z$, and  $z$ is not the tail of a $\searr$ arrow.
Then there is a square respecting reading word  $\e{w}$ of  $R$ such that the letters appearing to the right of $\e{R_z}$ in  $\e{w}$ are $< R_z$.

Similarly, if  $z$ is a cell of  $R$ such that the only cells $>_\nearrsub z$ lie in the same row as $z$, and  $z$ is not the tail of a $\nwarr$ arrow, then there is a square respecting reading word  $\e{v}$ of  $R$ such that the letters appearing to the right of $\e{R_z}$ in  $\e{v}$ are $> R_z$.
\end{lemma}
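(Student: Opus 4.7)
The plan is to prove the first assertion by induction on $|R|$, peeling off nontail removable cells from the right end of the reading word while preserving the hypotheses on $z$. For the base case $|R|=1$ the only cell is $z$ itself and its one-letter reading word trivially satisfies the requirement. For the inductive step, I would invoke the first part of Lemma \ref{l exist no tail corner} to obtain a nontail removable cell $y$ of $R$ lying weakly north of $z$. If $y=z$, then any square respecting reading word of $R\setminus\{z\}$ (for instance $\sqread(R\setminus\{z\})$) with $\e{R_z}$ appended is a valid reading word of $R$ (since $z$ is $\nearr$-maximal) and is square respecting (since $z$ is not an arrow tail), and nothing at all appears after $\e{R_z}$.

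If $y\neq z$, then the $\nearr$-maximality of $y$ together with the hypothesis on $z$ force $y$ to lie strictly north of $z$ and either in the column of $z$ or strictly northwest of $z$: a $y$ in the same row as $z$ or strictly east of $z$ would either fail to be $\nearr$-maximal or contradict the hypothesis on $z$. In the first case column-strictness gives $R_y<R_z$; in the second case $y<_\searrsub z$ with $y$ and $z$ in different rows and columns, so the gap-of-three axiom of an RSST gives $R_y\le R_z-3<R_z$. The key observation is that the hypotheses on $z$ persist in the sub-RSST $R':=R\setminus\{y\}$: the cells $>_\nearrsub z$ in $R'$ are a subset of those in $R$ and still lie in the column of $z$, and deleting a cell can only destroy arrow squares and never create them, so $z$ remains not the tail of a $\searr$ arrow. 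Induction then supplies a square respecting reading word $\e{w}'$ of $R'$ in which every letter to the right of $\e{R_z}$ is less than $R_z$, and the concatenation $\e{w}:=\e{w}'\,\e{R_y}$ is a valid reading word of $R$ (since $y$ is $\nearr$-maximal), is square respecting (since $y$, being nontail removable, is not the tail of any arrow), and every letter after $\e{R_z}$ in $\e{w}$ is still $<R_z$ because the only new final letter $\e{R_y}$ itself satisfies $R_y<R_z$.

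The second assertion admits an entirely parallel induction using the second half of Lemma \ref{l exist no tail corner} to peel off a nontail removable cell $y$ weakly east of $z$; the symmetric case analysis (using the hypothesis that the only cells $>_\nearrsub z$ lie in the row of $z$ and the $\nearr$-maximality of $y$) forces $R_y>R_z$ whenever $y\neq z$. Alternatively, the second statement may be recovered from the first via the reflection across a $\nearr$ line described in Remark \ref{r L diagram}, which exchanges rows with columns, interchanges $\searr$ and $\nwarr$ arrows, and reverses the order on entries. The only place requiring care throughout the argument is the verification that the hypotheses on $z$ are stable under removal of $y$, which I expect to be the main (though mild) bookkeeping obstacle; it boils down to the easy fact that deletion of a cell cannot create a new arrow square or any new $>_\nearrsub$ relation, so both halves of the hypothesis are automatically inherited by $R\setminus\{y\}$.
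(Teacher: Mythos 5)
Your proposal is correct and is essentially the paper's own argument: the paper proves this lemma precisely by induction on $|R|$ via Lemma \ref{l exist no tail corner}, and your write-up just fills in the bookkeeping (the position of the nontail removable cell $y$ forcing $R_y<R_z$, and the persistence of the hypotheses in $R\setminus\{y\}$). The only detail worth flagging is that "deletion never creates an arrow" is not purely formal but follows from row/column strictness, which forces the top-right entry of a full $2\times2$ square to lie in $\{a+1,a+2\}$, so removing it can only change a four-cell arrow square into the corresponding three-cell one with the same tail and head.
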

\begin{proof}
This follows by induction on  $|R|$ using Lemma \ref{l exist no tail corner}.
\end{proof}

The next result gives a natural way to associate an element of $\U/\Ilam{3}$ to any RSST.
\begin{theorem}\label{t square respecting connected}
Any two square respecting reading words of an RSST $R$ are equal in $\U/\Ilam{3}$.
\end{theorem}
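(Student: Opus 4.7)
The plan is to identify square respecting reading words as linear extensions of a natural acyclic digraph on the cells of $R$, and then to use the classical connectivity of the set of linear extensions of a DAG together with the far-commutation relations in $\U/\Ilam{3}$.

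Let $G$ be the digraph on the cells of $R$ with an edge $z \to z'$ whenever $z <_\nearrsub z'$ or there is an arrow of $R$ (in the sense of Definition~\ref{d arrows}) from $z$ to $z'$. By Definition~\ref{d square reading word}, the square respecting reading words of $R$ are exactly the linear extensions of $G$. I first verify that $G$ is a DAG: every arrow joins the NW and SE corners of a $2\times 2$ and hence preserves the content $c-r$, while $<_\nearrsub$ edges strictly increase content; within any single content diagonal only arrow edges occur, and each pair of consecutive cells on the diagonal is the NW/SE of at most one $2\times 2$ and contributes at most one arrow, giving a DAG on a totally ordered set. Once $G$ is acyclic, the standard theorem on linear extensions says any two linear extensions of $G$ are connected by a sequence of adjacent transpositions, each swapping a $G$-incomparable pair $z_a, z_b$ at adjacent positions.

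Setting $a = R_{z_a}, b = R_{z_b}$, the remaining goal is to show $|a - b| > 3$, so that each swap is an instance of the far-commutation relation $u_a u_b = u_b u_a$ of $\U/\Ilam{3}$; chaining these equalities along the sequence will yield $\e{w} \equiv \e{w'}$ for any two square respecting reading words. Since $z_a, z_b$ are $<_\nearrsub$-incomparable, they are strictly $<_\searrsub$-comparable and lie in distinct rows and columns, so Definition~\ref{d RSST} gives $|a-b|\ge 3$. Suppose for contradiction $|a-b|=3$; WLOG $z_a=(r,c), z_b=(r',c')$ with $r<r', c<c'$, $b=a+3$. I would then argue $(r',c')=(r+1,c+1)$: if $(r+1,c+1)\in R$, the diagonal condition of Definition~\ref{d RSST} forces $R_{(r+1,c+1)}\ge a+3$, and comparing with $z_b$ either through the diagonal condition or through row/column strictness forces $z_b=(r+1,c+1)$; if $(r+1,c+1)\notin R$, the column-contiguity and $\alpha/\lambda$-monotonicity of a restricted shape together with row/column strictness rule out both of the remaining possibilities $c'=c+1, r'\ge r+2$ and $c'\ge c+2$ by producing an intermediate cell in column $c$ or $c+1$ whose RSST constraint contradicts $R_{z_b}=a+3$.

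Once $(r',c')=(r+1,c+1)$ is known, the same restricted-shape argument shows $(r+1,c)\in R$ with value in $\{a+1,a+2\}$, and that $(r,c+1)$ is either in $R$ with value in $\{a+1,a+2\}$ or absent with $\alpha_{c+1}=r$; in every case the $2\times 2$ (or its $3$-cell NE-missing variant) matches one of the four arrow-square forms of Definition~\ref{d arrows}. Hence a $G$-edge connects $z_a$ and $z_b$, contradicting $G$-incomparability. The main obstacle I expect is this restricted-shape case analysis: although no individual step is deep, handling the $3$-cell arrow-square (where $(r,c+1)$ is genuinely absent from $R$) requires delicate use of the monotonicity conditions defining a restricted shape together with the row, column, and diagonal constraints of Definition~\ref{d RSST}.
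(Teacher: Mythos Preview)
Your approach is correct and genuinely different from the paper's. You identify square respecting reading words with linear extensions of the DAG $G$ and invoke the classical fact that any two linear extensions are connected by adjacent transpositions of $G$-incomparable pairs; you then argue that any such pair has entry difference $>3$, so every swap is a far commutation. The paper instead inducts on $|R|$: it shows that the words ending in $\e{R_{z_i}}$ for each nontail removable cell $z_i$ form connected pieces (by induction), and that these pieces are mutually connected because distinct nontail removable cells have labels differing by $>3$. In effect the paper only needs your ``incomparable $\Rightarrow$ difference $>3$'' claim for pairs of $\nearr$-maximal cells, at the cost of requiring Lemma~\ref{l exist no tail corner} to guarantee a nontail removable cell exists. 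Your route is more direct and avoids this lemma, but demands the full case analysis for arbitrary $G$-incomparable pairs.

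That case analysis does go through. The step you flag as delicate is fine: if $z_a=(r,c)$, $z_b=(r',c')$ with $r<r'$, $c<c'$ and $R_{z_b}-R_{z_a}=3$, then from $(r',c+1)\in R$ (using $\lambda_{c+1}\ge\lambda_{c'}\ge r'$ and $\alpha_{c+1}\le\alpha_{c'}<r'$) and the diagonal condition $R_{(r',c+1)}\ge a+3$ together with row strictness, one forces $c'=c+1$; then from $(r+1,c)\in R$ and the diagonal condition applied to $(r+1,c)<_{\searrsub}(r',c+1)$ one forces $r'=r+1$. With $z_b=(r+1,c+1)$, the cell $(r+1,c)$ is always present with value in $\{a+1,a+2\}$, and whether or not $(r,c+1)$ is present one lands in one of the four arrow-square forms of Definition~\ref{d arrows}, producing the contradicting $G$-edge. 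One small wording fix: within a single diagonal the arrow edges need not all point the same way, so it is not literally ``a DAG on a totally ordered set'' in the sense of respecting that order; rather, arrows only join adjacent diagonal cells and each such pair carries at most one arrow, which suffices for acyclicity.
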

\begin{proof}
Let $\R_R$ denote the graph with vertex set the square respecting reading words of $R$ and an edge for each far commutation relation.
We prove that  $\R_R$ is connected by induction on $|R|$.
Let $z_1, \ldots, z_t$ denote the nontail removable cells of  $R$
($t \geq 1$ by Lemma \ref{l exist no tail corner}).
By induction, each $\R_{R-z_i}$ is connected.  Hence  the induced subgraph of $\R_R$  with  vertex set consisting of those words that end in $\e{R_{z_i}}$, call it $\R_{R-z_i} \e{R_{z_i}}$, is connected. Let $H$ be the graph (with $t$ vertices) obtained from  $\R_R$ by contracting the subgraphs  $\R_{R-z_i}\e{R_{z_i}}$.
We must show that  $H$ is connected.

It follows from the definition of an RSST that if  $z,z'$  are $\nearr$-maximal cells of $R$, then  $|R_z - R_{z'}| \le 3$  if and only if   there is an arrow between $z$ and  $z'$.  Hence the $\e{R_{z_i}}$ pairwise commute.   Since  $\R_R$ contains a word ending in  $\e{R_{z_i}}\e{R_{z_j}}$ for every  $i\ne j$, $H$  is a complete graph.
\end{proof}

Lemma~\ref{l small letter end} and Theorem~\ref{t square respecting connected} have the following useful consequence.

\begin{corollary} \label{c removable 0}
If  $R$ is an RSST and  $z$  is a $\nearr$-maximal cell of  $R$, then  $\e{v}\e{R_z} =0$ in  $\U/\Ilam{3}$ for every square respecting reading word  $\e{v}$ of $R$.
\end{corollary}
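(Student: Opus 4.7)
The plan is to exhibit one square respecting reading word $\e{w}$ of $R$ with $\e{w}\,\e{R_z} = 0$ in $\U/\Ilam{3}$, and then invoke Theorem \ref{t square respecting connected} to transfer this vanishing to every other such word. The tool for producing the zero will be Corollary \ref{c temperley lieb}, which kills any word starting and ending with the same letter $\e{m}$ that fails to contain $\e{m + 3}$ or $\e{m - 3}$.

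To produce $\e{w}$, I will invoke Lemma \ref{l small letter end}. Since $z$ is $\nearr$-maximal, there are no cells $>_\nearrsub z$ at all, so both geometric hypotheses of the lemma hold vacuously, and one of its two conclusions will apply as long as $z$ is not the tail of a $\searr$ arrow or not the tail of a $\nwarr$ arrow. The main step I expect to require care is showing that $z$ cannot be the tail of both types at once. The argument: if $z = (r,c)$ were the tail of both, then $\nearr$-maximality would force $(r-1,c) \notin R$ and $(r,c+1) \notin R$, so both arrow squares would have to be L-shaped; the resulting configuration of cells $(r-1,c-1), (r,c-1), z, (r+1,c), (r+1,c+1)$ together with the two L-shaped arrow squares is exactly the one forbidden by Proposition \ref{p forbidden arrows}.

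Granted that, Lemma \ref{l small letter end} gives a square respecting reading word $\e{w} = \e{a}\,\e{R_z}\,\e{b}$ with every letter of $\e{b}$ strictly less than $R_z$ (or else every letter of $\e{b}$ strictly greater than $R_z$). In the first subcase the subword $\e{R_z}\,\e{b}\,\e{R_z}$ of $\e{w}\,\e{R_z}$ omits $\e{R_z + 3}$; in the second it omits $\e{R_z - 3}$. Either way, this subword starts and ends with $\e{R_z}$ and, by Corollary \ref{c temperley lieb} with $k = 3$, it vanishes in $\U/\Ilam{3}$. Left-multiplying by $\e{a}$ gives $\e{w}\,\e{R_z} \equiv 0$, and Theorem \ref{t square respecting connected} extends this to every square respecting reading word of $R$.
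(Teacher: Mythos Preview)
Your proof is correct and follows essentially the same approach as the paper's: reduce to a single square respecting reading word via Theorem~\ref{t square respecting connected}, use Proposition~\ref{p forbidden arrows} to ensure one of the two hypotheses of Lemma~\ref{l small letter end} holds, and then apply Corollary~\ref{c temperley lieb} to the resulting word. Your explicit unpacking of why $\nearr$-maximality forces the forbidden configuration of Proposition~\ref{p forbidden arrows} (via the L-shaped arrow squares) is more detailed than the paper's one-line invocation, but the logic is identical.
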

\begin{proof}
By Theorem \ref{t square respecting connected},  it suffices to exhibit a single  square respecting reading word $\e{v}$ of  $R$ such that $\e{v}\e{R_z} \equiv0$.
By Proposition \ref{p forbidden arrows} one of the two statements of Lemma \ref{l small letter end} applies, thus $R$ has a square respecting reading word ending in  $\e{R_z}$ followed by letters all $> R_z$ or all  $< R_z$.  Hence Corollary \ref{c temperley lieb} with repeated letter  $\e{R_z}$ yields
 $\e{v}\e{R_z} \equiv 0$.
\end{proof}

Say that  an RSST  $R$ is \emph{nonzero} if any (equivalently, every) square respecting reading word of  $R$ is nonzero in  $\U/\Ilam{3}$ (see Proposition-Definition \ref{pd nonzero words}).
Although we will not need the next proposition in the proof of the main theorem, it is useful because it constrains the form of nonzero RSST.
\begin{proposition}\label{p squares are strict}
If an RSST $R$ contains an arrow square of the form
\setlength{\cellsize}{3.4ex}
\begin{equation}
\label{ep squares are strict}
{\tiny\tableau{{\color{black}\put(.25,-1.1){\vector(1,-1){.6}}}\put(-.1,-1){$a$}&a\text{\rm+}1\\a\text{\rm+}1&a\text{\rm+}3}} \qquad\text{or}\qquad
{\tiny\tableau{{\color{black}\put(.8,-1.6){\vector(-1,1){.6}}}\put(-.1,-0.96){$a$}&a\text{\rm+}2\\a\text{\rm+}2&a\text{\rm+}3}},
\end{equation}
\setlength{\cellsize}{2.1ex}
then every square respecting reading word of $R$ is 0 in $\U/\Ilam{3}$.
\end{proposition}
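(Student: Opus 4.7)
The plan is to apply Theorem~\ref{t square respecting connected} to reduce the problem to showing $\sqread(R)\equiv 0$ in $\U/\Ilam{3}$, and then to use Proposition-Definition~\ref{pd nonzero words}(iii) with $k=3$ to detect vanishing by exhibiting a pair of equal letters in $\sqread(R)$ such that the open interval between them misses at least one of the two witnesses $\pm 3$ away from the common value. By the reflection symmetry of Remark~\ref{r L diagram} (which interchanges the two displayed forms via $a\mapsto n+1-a$ and sends $\nwarr$ arrows to $\searr$ arrows), it suffices to handle the second (NW-arrow) case. Place the bad square at cells $(r_0,c_0)$, $(r_0,c_0{+}1)$, $(r_0{+}1,c_0)$, $(r_0{+}1,c_0{+}1)$ with values $a, a{+}2, a{+}2, a{+}3$, the $\nwarr$ arrow running from $(r_0{+}1,c_0{+}1)$ to $(r_0,c_0)$.

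The natural candidate bad pair is the two $a+2$'s at $(r_0{+}1,c_0)$ and $(r_0,c_0{+}1)$. Since $(r_0{+}1,c_0)<_\nearrsub (r_0,c_0{+}1)$, these appear in this order in every square respecting reading word, and the reading and arrow constraints force the sequence on these four cells alone to be $(a{+}2)(a{+}3)(a)(a{+}2)$. Using $u_au_{a+2}=q^{-1}u_{a+2}u_a$ and $u_{a+3}u_{a+2}=qu_{a+2}u_{a+3}$, this simplifies to $q^{-1}u_{a+2}u_{a+3}u_{a+2}u_a = u_{a+2}^2u_{a+3}u_a=0$; this dispatches the base case $|R|=4$ and motivates the choice of bad pair.

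The heart of the proof is to show that in $\sqread(R)$, between the two $a+2$'s, at least one of $a-1$ or $a+5$ is absent. The tools are (a) the RSST diagonal inequality $R_z+3\le R_{z'}$ for $z<_\searrsub z'$ in distinct rows and columns, which severely restricts where cells of value $a-1$ or $a+5$ can lie; (b) the $<_\nearrsub$ reading-order constraints; (c) the SW-to-NE diagonal rule for $\sqread$ from Definition~\ref{d square reading word}, reading NW-arrow tails first in the $\nwarr$ direction; and (d) Proposition~\ref{p forbidden arrows} to rule out incompatible local arrow configurations. For each candidate cell $z$ with $R_z\in\{a-1,a+5\}$, one checks that either $z$'s diagonal precedes that of the SW $a+2$ or follows that of the NE $a+2$ in the $\sqread$ order, or that $z$ shares a diagonal with one of the two $a+2$'s but an adjacent $\nwarr$-tail position or outgoing arrow (traced back via the $2\times 2$ subsquare containing $z$) pushes $z$ outside the interval.

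The main obstacle is closing the case analysis in the residual configurations: these arise when, for example, $(r_0{+}2,c_0)=a{+}3$ and $(r_0{+}2,c_0{+}1)=a{+}5$, producing a secondary SE-bad arrow square $\tableau{a{+}2 & a{+}3\\a{+}3 & a{+}5}$ directly below the original (and symmetrically a NW-bad square $\tableau{a{-}1&a{+}1\\a{+}1&a{+}2}$ above), so that $a+5$ (resp.\ $a-1$) sits between the two $a+2$'s in $\sqread(R)$. In these cascade cases I would switch targets to the repeated letter of the new bad square---the pair of $a+3$'s in the SE-bad case, the pair of $a+1$'s in the NW-bad case---and run the same criterion on that pair instead. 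I expect the cleanest implementation proceeds by induction on $|R|$: the inductive step either produces a bad pair directly from the current bad square or locates another bad square closer to the boundary of $R$, and Proposition~\ref{p forbidden arrows} is the combinatorial input ensuring the cascade terminates after finitely many iterations.
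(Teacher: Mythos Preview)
Your approach diverges from the paper's, and the cascade step is where it stops being a proof. You commit to analyzing the specific word $\sqread(R)$, then try to show directly that between the two copies of $a{+}2$ at least one of $a{-}1$, $a{+}5$ is missing. When that fails you propose to slide to an adjacent bad square and repeat. But switching bad squares does not decrease $|R|$, and ``closer to the boundary'' is not tied to any well-founded quantity; nothing you cite prevents a long chain of adjacent bad squares of alternating type running along a diagonal of $R$. Proposition~\ref{p forbidden arrows} only forbids a $\nwarr$ arrow immediately above a $\searr$ arrow in consecutive diagonals---it says nothing about a stack of bad $2\times 2$ squares in the same pair of columns, which is exactly the configuration your cascade produces. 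So the termination claim is unsupported.

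The paper avoids this entirely by not insisting on $\sqread$. After the same symmetry reduction, it takes the $\searr$ form (cells $s_1,s_2,s_3,s_4$ with entries $a,a{+}1,a{+}1,a{+}3$) and first uses induction on $|R|$ to remove any nontail removable cell other than $s_3$---such a removal always keeps $S$ intact---thereby reducing to the case where $s_3$ is the \emph{only} nontail removable cell and $S$ is the \emph{only} bad square. In that reduced situation Lemma~\ref{l small letter end} (applied at $z=s_1$, which is not a $\nwarr$ tail by Proposition~\ref{p forbidden arrows}) gives a square respecting reading word $\e{v}\,\e{R_{s_1}}\,\e{w}$ with every letter of $\e{w}$ exceeding $a$; a second application of Lemma~\ref{l small letter end} inside $\e{v}$ at $s_2$ (here the ``$S$ is the only bad square'' hypothesis is exactly what guarantees $s_2$ is not a $\nwarr$ tail) arranges $\e{v}$ to end in $\e{R_{s_2}}\e{v'}$ with all letters of $\e{v'}$ exceeding $a{+}1$. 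Now the two $a{+}1$'s bracket a segment containing no $a{-}2$, and Corollary~\ref{c temperley lieb} finishes.

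The moral: the paper's induction does the work your cascade is trying to do, but by shrinking $R$ rather than chasing a moving target, and the custom reading word built from Lemma~\ref{l small letter end} replaces your case analysis of $\sqread$ by a two-line verification.
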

\begin{proof}
By Remark \ref{r L diagram}, we may assume without loss of generality that  $R$ contains an arrow square $S$ of the form on the left of \eqref{ep squares are strict}, with cells labeled as follows
\[S = {\tiny\tableau{s_1&s_3\\s_2&s_4}} \subseteq R.
\]
It suffices to exhibit one square respecting reading word of  $R$ that is 0 in $\U/\Ilam{3}$.

By induction on  $|R|$, we may assume  $s_3$ is the only nontail removable cell of  $R$ and  $S$ is the only arrow square of  $R$ of either of the forms in \eqref{ep squares are strict}.
Since  $s_3$ cannot be the head of a  $\searr$ arrow of  $R$, it follows from Lemma \ref{l exist no tail corner} that if  $R$
has a cell north of $s_1$ and in the same column as  $s_1$,
then there is a nontail removable cell to the north of  $s_1$; we are assuming there is no such cell, so any cell  $>_\nearrsub s_1$ can only be in the same row as  $s_1$.
By Proposition \ref{p forbidden arrows},  $s_1$ is not the tail of a  $\nwarr$ arrow.
Hence by Lemma~\ref{l small letter end} (with  $z = s_1$), there is a square respecting reading word $\e{vR_{s_1} w}$ of $R$ such that
the letters of $\e{w}$ are $> R_{s_1} = a$; also we must have $\e{R_{s_3}}=\e{a+1}$ and $\e{R_{s_4}}$ contained in $\e{w}$.

Now observe that $s_2$ is a $\nearr$-maximal cell of $R'$, where $R'$ is the canonical sub-RSST of $R$ with reading word $\e{v}$.
There is no $\nwarr$ arrow of $R$ with tail $s_2$ because this would force an arrow square in  $R$ of the form on the right of \eqref{ep squares are strict}, which we are assuming does not exist.
This implies by Lemma~\ref{l small letter end} that we can assume $\e{v}$ ends in $\e{R_{s_2}v'}$ where $\e{v'}$ has letters $> R_{s_2} = a+1$.
Hence $\e{R_{s_2}v'R_{s_1}w} \equiv 0$ by Corollary~\ref{c temperley lieb} with  repeated letter $\e{a+1}$, implying that the square respecting reading word $\e{vR_{s_1}w}$ of  $R$ is 0 in  $\U/\Ilam{3}$.
\end{proof}

\section{Positive monomial expansion of noncommutative Schur functions}\label{s positive monomial}
After some preliminary definitions, we recall the main theorem and use it to give an explicit positive combinatorial formula for new variant $q$-Littlewood-Richardson coefficients indexed by a 3-tuple of skew shapes. In \textsection\ref{ss proof of theorem}, we prove a stronger, more technical version of the main theorem.
\subsection{Noncommutative flagged Schur functions}\label{ss noncommutative flagged schur}
Here we introduce the noncommutative Schur functions $\mathfrak{J}_{\lambda}(\mathbf{u})$ from \cite{FG,LamRibbon} and their flagged generalizations.
These generalizations will be put in a broader context in a future paper.

The \emph{noncommutative elementary symmetric functions} are given by
\[
e_d(S)=\sum_{\substack{i_1 > i_2 > \cdots > i_d \\ i_1,\dots,i_d \in S}}u_{i_1}u_{i_2}\cdots u_{i_d},
\]
for any subset $S$ of $\ZZ$ and positive integer $d$; set $e_0(S)=1$ and $e_{d}(S) = 0$ for $d<0$.
By \cite{LamRibbon}, $e_i(S) e_j(S) = e_j(S) e_i(S)$ in  $\U/\Ilam{k}$ for all $i$ and $j$.

Given a weak composition
$\alpha=(\alpha_1,\dots,\alpha_l)$ and sets $S_1,S_2,\dots, S_{l}\subseteq \ZZ$, define the \emph{noncommutative column-flagged Schur function} by
\begin{align}
J_{\alpha}(S_1,S_2,\dots, S_l)
:=\sum_{\pi\in \S_{l}}
\sgn(\pi) \, e_{\alpha_1+\pi(1)-1}(S_1) e_{\alpha_2+\pi(2)-2}(S_2)\cdots e_{\alpha_{l}+\pi(l)-l}(S_l). \label{e flag schur}
\end{align}
We also use the  following shorthands
\[J_\alpha^\mathbf{n} = J_\alpha(n_1,\ldots,n_l) = J_\alpha([n_1],\ldots,[n_l]),\]
where $\mathbf{n}=(n_1,\dots,n_l)$. These are related to the noncommutative Schur functions $\mathfrak{J}_{\lambda}(u_1,\dots,u_n)$ of \cite{FG} by
$\mathfrak{J}_{\lambda}(u_1,\dots,u_n) = J_{\lambda'}^{n\,n\,\cdots\,n}.$  When the $u_i$ commute, $\mathfrak{J}_{\lambda}(u_1,\dots,u_n)$ becomes the ordinary Schur function  $s_\lambda$ in $n$ variables, and  $J_{\lambda}^\mathbf{n}$ becomes the column-flagged Schur function $S_\lambda^*(\mathbf{1},\mathbf{n})$ studied in \cite{Wachs}, where  $\mathbf{1}$ denotes the all-ones vector of length  $l$.

For words $\e{w^1}, \ldots, \e{w^{l-1}} \in \U$, we will also make use of the \emph{augmented noncommutative column-flagged Schur functions}, given by
\begin{align*}
&J_{\alpha}(S_1\Jnot{w^1}S_2\Jnot{w^2}\cdots\Jnot{w^{l-1}}S_l) \\
&:=\sum_{\pi\in \S_{l}}
\sgn(\pi) \, e_{\alpha_1+\pi(1)-1}(S_1)\e{w^1} e_{\alpha_2+\pi(2)-2}(S_2)\e{w^2}\cdots  \e{w^{l-1}}e_{\alpha_{l}+\pi(l)-l}(S_l).
\end{align*}
If $\e{w^i}$ is empty for all $i \in [l]$,  $i \ne j$, and  $S_i = [n_i]$, we also use the shorthand
\[ J_\alpha^\mathbf{n}(\Jnotb{j}{w}) = J_{\alpha}(S_1\Jnot{w^1}S_2\Jnot{w^2}\cdots\Jnot{w^{l-1}}S_l).
\]

The (augmented) noncommutative column-flagged Schur functions will be considered here as elements of $\U/\Ilam{k}$, unless stated otherwise.
Note that  because the noncommutative elementary symmetric functions commute in $\U/\Ilam{k}$,
\begin{align}\label{e elem sym Jswap}
J_\alpha^\mathbf{n} \equiv -J_{\alpha_1, \ldots, \alpha_{j-1},\alpha_{j+1}-1,\alpha_j+1,\ldots,\alpha_l}^\mathbf{n} \quad \text{whenever  $n_j = n_{j+1}$}.
\end{align}
In particular,
\begin{align}\label{e elem sym J0}
J_\alpha^\mathbf{n} \equiv 0 \quad \text{whenever  $\alpha_j = \alpha_{j+1}-1$ and  $n_j=n_{j+1}$.} \tag{$\diamond$}
\end{align}
More generally, \eqref{e elem sym Jswap} and \eqref{e elem sym J0} hold for the augmented case provided $\e{w^j}$ is empty and the assumption $n_j = n_{j+1}$ is replaced by $S_j = S_{j+1}$.

We will make frequent use of the following fact (interpret $[0] = \{\}$):
\begin{align}\label{e ek induction}
e_d([m]) =&~\e{m} e_{d-1}([m-1])+e_d([m-1]) & \text{if  $m > 0$ and  $d$ is any integer}.
\end{align}
Note that
\begin{align*}
e_d([0]) =&
\begin{cases}
  1 & \text{ if } d = 0, \\
  0 & \text{otherwise.}
\end{cases} \notag
\end{align*}
We will often apply this to  $J_\alpha^\mathbf{n}$ and its variants by expanding
$e_{\alpha_j+\pi(j)-j}(S_j)$ in \eqref{e flag schur} using \eqref{e ek induction} (so that \eqref{e ek induction} is applied once to each of the  $l!$ terms in the sum in \eqref{e flag schur}).
We refer to this as a \emph{$j$-expansion of  $J_\alpha^\mathbf{n}$} or simply a  \emph{$j$-expansion}.


\subsection{The main theorem}
\label{ss qLR coefs}
Let $\text{RSST}_{\lambda}$ denote the set of restricted square strict tableaux of shape $\lambda$.  For a diagram  $\theta$ contained in columns  $1,\ldots,l$ and nonnegative integers $n_1,\ldots,n_l$, let  $\Tab_\theta^{n_1 n_2 \cdots n_l}$ denote the set of tableaux of shape  $\theta$ such that the entries in column  $c$ lie in $[n_c]$.
Define  $\mathfrak{J}_\lambda(\mathbf{u}) = J_{\lambda'}(\ZZ,\ldots,\ZZ)$.  Recall from Definition \ref{d square reading word} that  $\sqread(T)$ is a specially chosen reading word of  $T$.
We have the following generalization of Theorem \ref{t J intro} from the introduction to the noncommutative column-flagged Schur functions:
\begin{theorem}\label{t flag Schur k3}
If $\lambda$ is a partition with  $l = \lambda_1$ columns and $0\leq n_1\leq n_2\leq \cdots\leq n_l$, then in the algebra $\U/\Ilam{3}$,
\begin{align*}
J_{\lambda'}^{n_1 n_2 \cdots n_l} =
\sum_{ T \in \RSST_{\lambda}, \ T \in \Tab_{\lambda}^{n_1 n_2 \cdots n_l}} \sqread(T).
\end{align*}
\end{theorem}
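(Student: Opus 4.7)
The plan is to prove Theorem \ref{t flag Schur k3} by a strong induction on the weight $\sum_c n_c$ (with $l$ and $\lambda$ fixed), peeling off the largest available entry from the rightmost column. The key technical device will be the augmented noncommutative column-flagged Schur functions $J_\alpha(S_1 \Jnot{w^1} \cdots \Jnot{w^{l-1}} S_l)$ introduced in \textsection\ref{ss noncommutative flagged schur}; experience with Fomin--Greene style identities suggests that the clean inductive statement must allow a partially filled right-hand column to be represented by a word $\e{w^{l-1}}$ inserted between $S_{l-1}$ and $S_l$. Thus the actual statement I would prove by induction is the identity
\[
J_{\lambda'}(S_1 \Jnot{w^1} \cdots \Jnot{w^{l-1}} S_l) = \sum_{T} \sqread(T),
\]
the sum ranging over RSST of shape $\lambda$ whose column-$c$ entries lie in $S_c$ and whose lower portions match the inserted words, with the base case being the empty shape.

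For the inductive step, the main tool is the recursion \eqref{e ek induction} applied to the last factor $e_{\lambda'_l + \pi(l) - l}(S_l)$, splitting $J_{\lambda'}^{\mathbf{n}}$ as $A + B$, where $A$ collects the terms in which the largest letter $\e{n_l}$ is appended on the right of the $l$-th elementary symmetric, and $B$ is the same flagged Schur function with $n_l$ replaced by $n_l-1$. Term $B$ is immediately handled by induction and corresponds to tableaux not using $\e{n_l}$ in column $l$. Term $A$ has the shape of an augmented flagged Schur function with the letter $\e{n_l}$ inserted; the goal is to show that it equals the generating function over RSST whose column-$l$ bottom entry is $\e{n_l}$. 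To analyze $A$, I would continue to apply \eqref{e ek induction} at column $l-1$ (and further leftward), producing either a letter $\e{m}$ appended to column $l-1$ or a pure $e_d([m-1])$. Each time such a letter is produced, it is either compatible with the RSST condition (entries strictly increase west-to-east, and diagonal spacing $\geq 3$) relative to $\e{n_l}$, in which case it yields a legitimate cell, or it produces a forbidden configuration, in which case the resulting monomial must be shown to either vanish in $\U/\Ilam{3}$ or cancel against a partner term.

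The cancellation and vanishing are where the three relations of $\U/\Ilam{3}$ come in. When the bottom entries of two adjacent columns are equal or differ by the wrong amount, the key cancellation is \eqref{e elem sym J0} combined with the antisymmetry \eqref{e elem sym Jswap}, which essentially implements the determinantal sign-reversing involution in the Jacobi--Trudi formula. When a repeated letter appears in a monomial with neither of the two necessary ``separators'' $\e{n_l \pm 3}$ between the occurrences, Corollary \ref{c temperley lieb} forces the monomial to zero. The arrow-square prescriptions in Definition \ref{d arrows} are precisely the configurations where the $q$-commutation \eqref{e q commute} and far commutation \eqref{e far commute} relations let me slide the new letter past existing column-$l-1$ letters into a square respecting reading word, so choosing $\sqread$ as the output word on the RSST side is consistent with the way $A$ naturally decomposes; here Theorem \ref{t square respecting connected} guarantees any reading order we pick can be normalized to $\sqread$.

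The main obstacle I anticipate is precisely the bookkeeping of step 3: for each choice of which columns contribute letters during the expansion, one must verify that the resulting monomial, once rewritten in a canonical form using the $\U/\Ilam{3}$ relations, equals $\sqread(T)$ for a unique RSST $T$, and that all other monomials either vanish by Corollary \ref{c temperley lieb}/Proposition \ref{p forbidden arrows}/Proposition \ref{p squares are strict} or cancel in sign-reversing pairs produced by \eqref{e elem sym Jswap}. The case analysis is manageable only because $k = 3$ limits the number of distinct arrow-square shapes to the four listed in Definition \ref{d arrows}; each of these corresponds to one combinatorial ``move'' that the induction must account for. Once this case analysis is completed at the level of augmented flagged Schur functions with an inserted letter, specialization $S_c = [n_c]$ and all $\e{w^c}$ empty recovers the statement of Theorem \ref{t flag Schur k3}.
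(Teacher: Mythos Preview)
Your proposal has the right overall shape (inductive peeling via \eqref{e ek induction}, determinantal cancellation via \eqref{e elem sym J0}--\eqref{e elem sym Jswap}, vanishing via Corollary~\ref{c temperley lieb}, and a strengthened hypothesis using augmented $J$'s), but there are two genuine gaps that keep it from being a proof.

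First, the peeling order you choose (largest entry from the rightmost column) does not produce a reading word of the resulting tableau.  Expanding $e_d([n_l])$ leaves the new letter $\e{n_l}$ sandwiched between columns $l{-}1$ and the rest of column $l$; to match $\sqread(T)$ you would then have to commute it across all of $e_{\cdots}(S_1)\cdots e_{\cdots}(S_{l-1})$, and those factors contain letters arbitrarily close to $n_l$ in value, so neither far commutation nor Theorem~\ref{t square respecting connected} applies.  The paper avoids this entirely by peeling along diagonals from the southwest (Theorem~\ref{t flag Schur k3 technical}): the staircase shape of $\alpha$ and the index $j$ are engineered so that each newly peeled letter lands adjacent to the already-built prefix $\e{v}$ and only needs to cross a short ``in-transit'' word $\e{w}$ carried by the current diagonal.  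This is why the strengthened inductive hypothesis has the very constrained form (i)--(v), not arbitrary inserted words at every slot.

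Second, and more critically, you do not identify the technical step that handles the RSST diagonal condition $R_{z'}-R_z\ge 3$.  When the newly peeled entry $n_j$ satisfies $n_j=r_{j+1}-2$ (case (B) of the paper's proof), neither \eqref{e elem sym J0} nor Corollary~\ref{c temperley lieb} applies directly, because the flags $n_j$ and $n_{j+1}$ are \emph{not} equal.  The paper isolates Lemma~\ref{l lambda1 eq lambda2 commute} (and its refinement Corollary~\ref{c aaa+1 commute lam}): a letter $\e{x}$ may be commuted across a flagged Schur function with two equal consecutive parts.  This is precisely what brings the offending term into a form where Corollary~\ref{c temperley lieb} kills it.  Your hope that \eqref{e elem sym Jswap} handles ``bottom entries \ldots\ differ by the wrong amount'' is too optimistic---that identity needs $n_j=n_{j+1}$, not merely close values.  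Without an analog of Lemma~\ref{l lambda1 eq lambda2 commute}, the inductive step cannot be closed.
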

In the next subsection  we will prove a stronger, more technical version  of this theorem.    Recall that Theorem \ref{t J intro} states that
\begin{align*}
\qquad \ \ \ \ \ \mathfrak{J}_\lambda(\mathbf{u}) = \sum_{T \in \RSST_\lambda} \sqread(T) \qquad \ \text{in }\, \U/\Ilam{3}.
\end{align*}
This follows from Theorem \ref{t flag Schur k3} since  $J_{\lambda'}(\ZZ,\ldots,\ZZ)$ can be written as a sum of words in  $\U$, grouped according to the multiset of letters appearing in each word,  and each group can be computed using Theorem \ref{t flag Schur k3}.

We now use this to deduce an explicit combinatorial formula for the coefficients of the  Schur expansion of new variant combinatorial LLT polynomials indexed by a 3-tuple of skew shapes. This is a straightforward application of the Fomin-Greene machinery \cite{FG} (see also \cite{LamRibbon, BF}).

Define the $\field$-linear map
\[\Delta: \U\to \field[x_1, x_2, \dots] \ \text{ by }  \ \e{v} \mapsto Q_{\Des(\e{v})}(\mathbf{x}).\]
Let $\langle\cdot ,\cdot  \rangle$ be the symmetric bilinear form on $\U$ in which the monomials form an orthonormal basis.
Note that any element of $\U/\Ilam{k}$ has a well-defined pairing with any element of $(\Ilam{k})^\perp$.
We need the following variant of Theorem 1.2  of \cite{FG} and results in Section 6 of \cite{LamRibbon}; see \cite{BF} for a detailed proof.

\begin{theorem} \label{t basics}
For any $f\in (\Ilam{k})^\perp$,
\[\Delta(f) =
\Big\langle \sum_{\e{v}\in \U} Q_{\Des(\e{v})}(\mathbf{x})\e{v} , f \Big\rangle
 = \sum_{\lambda} s_\lambda(\mathbf x) \langle \mathfrak J_{\lambda}(\mathbf{u}) , f \rangle. \]
\end{theorem}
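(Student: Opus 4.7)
The plan is to split the statement into the two advertised equalities, prove the first essentially by bookkeeping, and prove the second by invoking the Fomin--Greene noncommutative Cauchy identity adapted to $\U/\Ilam{k}$ by Lam.

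For the first equality, I would expand $f = \sum_{\e{v}} c_{\e{v}} \e{v}$ inside a fixed graded component of $\U$ (so all sums are finite within each degree). By definition and $\field$-linearity of $\Delta$,
\[
\Delta(f) \;=\; \sum_{\e{v}} c_{\e{v}}\, Q_{\Des(\e{v})}(\mathbf{x}).
\]
On the other hand, extending $\langle\cdot,\cdot\rangle$ to be $\field[\mathbf{x}]$-linear in the first slot and using orthonormality of the monomial basis,
\[
\Big\langle \sum_{\e{w}\in\U} Q_{\Des(\e{w})}(\mathbf{x})\,\e{w},\, f\Big\rangle
= \sum_{\e{w}} Q_{\Des(\e{w})}(\mathbf{x})\,\langle \e{w}, f\rangle
= \sum_{\e{w}} Q_{\Des(\e{w})}(\mathbf{x})\, c_{\e{w}},
\]
which matches. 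Note that the hypothesis $f\in(\Ilam{k})^\perp$ is not needed for this step.

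The second equality is where Lam's adaptation of Fomin--Greene enters. The key intermediate claim, which I would state and prove as a lemma, is the noncommutative Cauchy identity
\[
\Omega(\mathbf{x},\mathbf{u}) \;=\; \sum_{\e{v}\in\U} Q_{\Des(\e{v})}(\mathbf{x})\,\e{v}
\;\equiv\; \sum_{\lambda} s_\lambda(\mathbf{x})\, \mathfrak{J}_\lambda(\mathbf{u}) \pmod{\Ilam{k}},
\]
where the congruence is taken in $\field[\![\mathbf{x}]\!]\otimes\U$. Once this is established, pairing both sides with $f\in(\Ilam{k})^\perp$ kills the error term, and the first equality (already proved) identifies the left-hand pairing with $\Delta(f)$, giving the theorem.

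To prove the intermediate claim, I would follow the standard Fomin--Greene route (as in \cite{FG}, with the modifications in \cite{LamRibbon, BF}). Expand $\Omega(\mathbf{x},\mathbf{u}) = \prod_j \prod_i (1-x_j u_i)^{-1}$ (with a fixed order on the $u_i$) into a sum over monomials in $\mathbf{x}$ whose coefficients are ordered products of the noncommutative elementary symmetric functions $e_d(\ZZ)$. The decisive input is the commutativity
\[
e_i(\ZZ)\, e_j(\ZZ) \,\equiv\, e_j(\ZZ)\, e_i(\ZZ) \pmod{\Ilam{k}}
\]
observed in \textsection\ref{ss noncommutative flagged schur}. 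Because of this commutativity, the usual transition matrix from monomial to Schur bases applies symbolically to the $e_d(\ZZ)$, converting the monomial expansion in $\mathbf{x}$ into a Schur expansion whose coefficients are precisely the Jacobi--Trudi determinants $J_{\lambda'}(\ZZ,\ldots,\ZZ) = \mathfrak{J}_\lambda(\mathbf{u})$.

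The main technical obstacle is handling convergence of the infinite products and sums. I would address this by working in each bidegree (fixed degree in the $x_j$ and fixed word length in the $u_i$) separately, where everything truncates to finite sums; the identity in each bidegree is purely algebraic and depends only on the commutativity of the $e_d(\ZZ)$ modulo $\Ilam{k}$. Since a detailed account of exactly this argument is given in \cite{BF} and essentially appears in Section 6 of \cite{LamRibbon}, I would present the proof as an application of those results, verifying only that the hypotheses of the Fomin--Greene setup hold for Lam's quotient $\U/\Ilam{k}$.
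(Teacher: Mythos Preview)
Your proposal is correct and matches the paper's approach: the paper does not actually prove this theorem but instead states it as ``a variant of Theorem 1.2 of \cite{FG} and results in Section 6 of \cite{LamRibbon}; see \cite{BF} for a detailed proof.'' Your outline---bookkeeping for the first equality, then the noncommutative Cauchy identity $\Omega(\mathbf{x},\mathbf{u})\equiv\sum_\lambda s_\lambda(\mathbf{x})\mathfrak{J}_\lambda(\mathbf{u})$ modulo $\Ilam{k}$ via commutativity of the $e_d$'s, paired against $f\in(\Ilam{k})^\perp$---is exactly the Fomin--Greene/Lam argument the paper is invoking, so you have essentially reconstructed what \cite{BF} supplies.
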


Recall that the \emph{new variant $q$-Littlewood-Richardson coefficients}
 $\mathfrak{c}_{\bm{\beta}}^\lambda(q)$ are the coefficients in the Schur expansion of the new variant
combinatorial LLT polynomials, i.e.
\[\mathcal{G}_{\bm{\beta}}(\mathbf{x};q) = \sum_{\lambda}\mathfrak{c}_{\bm{\beta}}^\lambda(q)s_\lambda(\mathbf{x}).\]
The new variant $q$-Littlewood-Richardson coefficients are known to be polynomials in $q$ with nonnegative integer coefficients \cite{GH}, but positive combinatorial formulae for these coefficients have only been given in the cases that $k \le 2$ and the diameter of  $\bm{\beta}$ is  $\le 3$ (see \eqref{e diameter}). Below is the first positive combinatorial interpretation in the $k=3$ case.

In order to give an expression for the $\mathfrak{c}_{\bm{\beta}}^\lambda(q)$ that does not depend on arbitrary choices of square respecting reading words, we adapt the statistics $\Desi{3}$ and  $\invi{3}$ to RSST:
\begin{align*}
\Desi{3}(T) &= \{(T_z, T_{z'}) \mid z,z' \in \sh(T), \ T_z - T_{z'} = 3, \\
&\qquad\qquad\qquad\text{ and } (z <_\nearrsub z' \text{ or there is a  $\nwarr$ arrow from  $z$ to  $z'$ in  $T$)} \}, \\
\invi{3}(T) &= |\{(z, z') \mid z,z' \in \sh(T), \ 0 < T_z - T_{z'} < 3, \text{ and } z <_\nearrsub z'\}|,
\end{align*}
where  $T$ is any RSST and the first expression is a multiset.
We refer to these as the \emph{3-descent multiset} and the \emph{3-inversion number} of $T$.
One checks easily that
\begin{align}\label{e des tab}
\text{$\Desi{3}(T) = \Desi{3}(\e{v})$ and $\invi{k}(T) = \invi{k}(\e{v})$}
\end{align}
for any square respecting reading word  $\e{v}$ of  $T$.
Recall that  an RSST  $T$ is \emph{nonzero} if any (equivalently, every) square respecting reading word of  $T$ is nonzero in  $\U/\Ilam{3}$.

Recall from Definition \ref{d Wi} that $\Wi{k}(\bm{\beta})  = \{\e{v}\in \U \mid \bm{\delta} \circ_\mathbf{0}\e{v}=  \bm{\gamma}\}$, where  $\bm{\beta} = \bm{\gamma}/\bm{\delta}$ and $\circ_{\mathbf{0}}$ is an action of $\U$ on $k$-tuples of partitions.

\begin{corollary}\label{c main}
Let $\bm{\beta}$ be a 3-tuple of skew shapes with contents, and let $c,D'$ be the corresponding content vector
and multiset from Proposition~\ref{p equivalence classes} (ii).
Then the new variant $q$-Littlewood-Richardson coefficients are given by
\[\mathfrak{c}_{\bm{\beta}}^\lambda(q) = \sum_{\substack{\e{v}\in\Wi{3}(\bm{\beta}) \\\e{v}\in \{\sqread(T) \mid T \in \RSST_\lambda\}}}q^{\invi{3}(\e{v})}
= \sum_{\substack{T \in \RSST_\lambda, \ T \text{ nonzero} \\ \Desi{3}(T) = D', \, \text{$c =$ sorted entries of  $T$}} } q^{\invi{3}(T)}. \]
\end{corollary}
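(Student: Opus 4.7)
The plan is to combine Theorem~\ref{t J intro} with the Fomin--Greene style pairing of Theorem~\ref{t basics}. Set
\[
f_{\bm{\beta}} \,:=\, \sum_{\e{v}\in\Wi{3}(\bm{\beta})} q^{\invi{3}(\e{v})}\, \e{v} \ \in\ \U,
\]
so that Proposition~\ref{p assafi} already gives $\Delta(f_{\bm{\beta}}) = \mathcal{G}_{\bm{\beta}}(\mathbf{x};q)$. The first step is to verify $f_{\bm{\beta}} \in (\Ilam{3})^\perp$. Pairing with any word-multiple of $u_i^2$, $u_{i+3}u_iu_{i+3}$, or $u_iu_{i+3}u_i$ vanishes because such a monomial is $0$ in $\U/\Ilam{3}$, hence by Proposition-Definition~\ref{pd nonzero words} lies in no nonzero $q{=}1$ equivalence class and so has coefficient zero in $f_{\bm{\beta}}$. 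Pairing with a far-commutation relation $u_iu_j - u_ju_i$ ($|i-j|>3$) is zero because the two monomials belong to the same equivalence class and have equal $\invi{3}$. For the $q$-commutation $u_iu_j - q^{-1}u_ju_i$ ($0<j-i<3$), swapping an adjacent pair $u_iu_j$ to $u_ju_i$ inside any word increases $\invi{3}$ by exactly one, supplying the factor of $q$ required for cancellation.

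With $f_{\bm{\beta}} \in (\Ilam{3})^\perp$ in hand, Theorem~\ref{t basics} combined with $\Delta(f_{\bm{\beta}}) = \mathcal{G}_{\bm{\beta}}$ and the defining Schur expansion $\mathcal{G}_{\bm{\beta}} = \sum_\lambda \mathfrak{c}_{\bm{\beta}}^\lambda(q)\, s_\lambda$ yields
\[
\mathfrak{c}_{\bm{\beta}}^\lambda(q) \,=\, \langle \mathfrak{J}_\lambda(\mathbf{u}),\, f_{\bm{\beta}}\rangle.
\]
Now I would substitute Theorem~\ref{t J intro}: $\mathfrak{J}_\lambda(\mathbf{u}) = \sum_{T \in \RSST_\lambda} \sqread(T)$ in $\U/\Ilam{3}$. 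Because $f_{\bm{\beta}}$ is perpendicular to $\Ilam{3}$, the pairing descends to $\U/\Ilam{3}$, so any lift to $\U$ of the right-hand side gives the same value. Each $\sqread(T)$ is a single monomial, so $\langle \sqread(T), f_{\bm{\beta}}\rangle$ equals $q^{\invi{3}(\sqread(T))}$ if $\sqread(T)\in\Wi{3}(\bm{\beta})$ and $0$ otherwise; summing over $T$ (which by the diagonal-by-diagonal recipe defining $\sqread$ maps injectively into $\U$) yields the first equality of the corollary.

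For the second equality, I would invoke Proposition~\ref{p equivalence classes}(ii), which identifies $\Wi{3}(\bm{\beta})$ with $\Wi{3}(c,D')$, i.e.\ the set of nonzero $3$-words whose sorted letters form $c$ and whose $3$-descent multiset is $D'$. Using \eqref{e des tab} to replace $\Desi{3}(\sqread(T))$ and $\invi{3}(\sqread(T))$ by $\Desi{3}(T)$ and $\invi{3}(T)$, together with the fact that $T$ is declared nonzero precisely when its square respecting reading words are nonzero in $\U/\Ilam{3}$, the conditions on $\e{v}=\sqread(T)$ translate directly into the conditions on $T$ in the sum-over-tableaux form. The genuinely hard content has already been absorbed into Theorem~\ref{t J intro}; the only real obstacle in the corollary itself is the compatibility check that $f_{\bm{\beta}}\in(\Ilam{3})^\perp$, which rests on matching the sign of the $q$-commutation with the change in $\invi{3}$ under an adjacent swap.
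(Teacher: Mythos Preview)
Your proposal is correct and follows essentially the same approach as the paper's own proof: verify $f_{\bm{\beta}}\in(\Ilam{3})^\perp$ using Proposition~\ref{p equivalence classes}(i) together with the behavior of $\invi{3}$ under the defining relations, then apply Theorem~\ref{t basics} and Theorem~\ref{t J intro}, and finally deduce the tableau form from Proposition~\ref{p equivalence classes}(ii) and \eqref{e des tab}. Your treatment is slightly more explicit than the paper's (checking each relation type separately and noting injectivity of $\sqread$ on $\RSST_\lambda$, which follows since the shape determines the diagonal sizes and entries increase in the $\searr$ direction along each diagonal), but the argument is the same.
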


The advantage of the second expression is that it makes it clear that the answer depends only on a set of RSST and not on choices for their square respecting reading words.  In the special case that  $c$ contains no repeated letter, this second expression is equal to
\[\sum_{S \in \SYT_\lambda, \, \Desip{3}(c,S) = D'} q^{\invip{3}(c,S)}, \]
where  $\Desip{3}$ and  $\invip{3}$ are defined so that  $\Desip{3}(c,T^\stand) = \Desi{3}(T)$ and  $\invip{3}(c,T^\stand) = \invi{3}(T)$ whenever  $c$ is equal to the sorted entries of  $T$; here,  $T^\stand$ denotes the SYT obtained from  $T$ by replacing its smallest entry with 1, its next smallest entry with 2, etc.
This reformulation is essentially Haglund's conjecture \cite[Conjecture 3]{Haglund}. (Haglund's conjecture is for those new variant combinatorial LLT polynomials which appear in the expression for transformed Macdonald polynomials as a positive sum of LLT polynomials \cite{HHL}.  In particular, all such LLT polynomials correspond to a tuple of skew shapes that have a content vector with no repeated letter.)

\begin{proof}[Proof of Corollary \ref{c main}]
If $\e{v}= \e{v_1} \cdots \e{v_t} \in\U$ and  $\e{w} = \e{v_1} \cdots \e{v_{i-1} v_{i+1} v_i v_{i+2}} \cdots \e{v_t}$ with  $0 < \e{v_{i+1}} - \e{v_i} < 3$,
then $\e{v}\equiv q^{-1} \e{w}$ by \eqref{e q commute} and $q^{-\invi{3}(\e{v})}\e{v}- q^{-\invi{3}(\e{w})}\e{w} \equiv 0$.
It follows from this and Proposition~\ref{p equivalence classes} (i)  that
\[\sum_{\e{v}\in\Wi{3}(\bm{\beta})} q^{\invi{3}(\e{v})}\e{v}\in (\Ilam{3})^\perp. \]

By definition of  $\Delta$ and Proposition \ref{p assafi},
\[\Delta \bigg( \sum_{\e{v}\in\Wi{3}(\bm{\beta})} q^{\invi{3}(\e{v})}\e{v}\bigg) =  \sum_{\e{v}\in\Wi{3}(\bm{\beta})}q^{\invi{3}(\e{v})}Q_{\Des(\e{v})}(\mathbf{x}) = \mathcal{G}_{\bm{\beta}}(\mathbf{x};q).
\]
By Theorem \ref{t basics}, the coefficient of  $s_\lambda(\mathbf{x})$ in $\mathcal{G}_{\bm{\beta}}(\mathbf{x};q)$ is
\[\mathfrak{c}_{\bm{\beta}}^\lambda(q) = \left\langle \mathfrak{J}_{\lambda}(\mathbf{u}), \sum_{\e{v}\in\Wi{3}(\bm{\beta})}q^{\invi{3}(\e{v})}\e{v} \right\rangle =
\sum_{\substack{\e{v}\in\Wi{3}(\bm{\beta}) \\\e{v}\in \{\sqread(T) \mid T \in \RSST_\lambda\}}}q^{\invi{3}(\e{v})},\]
where the second equality is Theorem \ref{t J intro}.
The second expression for  $\mathfrak{c}_{\bm{\beta}}^\lambda(q) $ follows from this one by Proposition~\ref{p equivalence classes} (ii) and \eqref{e des tab}.
\end{proof}

\begin{example}\label{ex LLT2}
Let
$\bm{\beta} = (2/1,33/11,33/21)$, $\e{v}= \e{48714235}$, and  $c = 12344578$ be as in Example \ref{ex LLT}.
Let  $D' = \Desi{3}(\e{v}) =  \{(4,1),(7,4),(8,5)\}$.  The coefficient  $\mathfrak{c}_{\bm{\beta}}^{(4,3,1)}(q)$ is computed by finding all
the nonzero RSST  $T$ of shape  $(4,3,1)$ with 3-descent multiset  $D'$.  These are shown below, along with their square respecting reading words and
3-inversion numbers.

\[
\begin{array}{l|ccc}
\ \ T&
{ \tiny \tableau{
1&3&4&5\\2&7&8\\4\\}} &
{ \tiny \tableau{
1&2&4&5\\3&7&8\\4\\}} &
{ \tiny \tableau{
1&2&4&5\\3&4&7\\8\\}}
\\[5.4mm]
\sqread(T) &
\e{42173845} & \e{ 43172845}& \e{83412745} \\[1.4mm]
\invi{3}(T) &
4 & 5 & 5
\end{array}
\]
Hence
\[ \mathfrak{c}_{\bm{\beta}}^{(4,3,1)}(q) = q^4 + 2q^5. \]
\end{example}

\subsection{Proof of Theorem \ref{t flag Schur k3}}\label{ss proof of theorem}


After three preliminary results, we state and prove a more technical version of Theorem~\ref{t flag Schur k3}, which involves computing $J_\lambda^\mathbf{n}$ inductively by pealing off diagonals from the shape $\lambda'$. Recall that $\equiv$ denotes equality in $\U/\Ilam{k}$.

The following lemma  is key to the proof of Theorem \ref{t flag Schur k3}.
\begin{lemma}\label{l lambda1 eq lambda2 commute}
If  $m < x$, then in $\U/\Ilam{k}$,
\[
J_{(a,a)}([m]  \Jnot{x}   [m]) = \e{x} J_{(a,a)}([m], [m]).
\]
More generally,
if  $m < x$ and  $\alpha$ is a weak composition satisfying  $\alpha_j = \alpha_{j+1}$ and  $\mathbf{n} = (n_1,\ldots, n_l)$ with  $n_j = n_{j+1}$, then in $\U/\Ilam{k}$,
\[
J_\alpha^\mathbf{n}(\Jnotb{j}{x}) = J_\alpha^\mathbf{n}(\Jnotb{j-1}{x}).
\]
\end{lemma}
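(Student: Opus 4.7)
The plan is to reduce the general statement to a single algebraic identity in $\U/\Ilam{k}$ via a pairing argument, and then prove that identity by induction on $m$.

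First, unwind the definitions:
\[
J_\alpha^\mathbf{n}(\Jnotb{j}{x}) - J_\alpha^\mathbf{n}(\Jnotb{j-1}{x}) = \sum_{\pi \in \S_l} \sgn(\pi)\, A_\pi\, [e_{p_j}([m]), \e{x}]\, e_{p_{j+1}}([m])\, B_\pi,
\]
where $A_\pi$ and $B_\pi$ collect the $e$-factors at positions $<j$ and $>j+1$ respectively, $p_i := \alpha_i + \pi(i) - i$, and $[\,\cdot\,,\,\cdot\,]$ denotes the commutator. Pair each $\sigma$ with $\sigma' := \sigma \cdot (j, j+1)$; then $A_\sigma = A_{\sigma'}$, $B_\sigma = B_{\sigma'}$, and $\sgn(\sigma') = -\sgn(\sigma)$. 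The hypotheses $\alpha_j = \alpha_{j+1}$ and $n_j = n_{j+1} = m$ imply each such pair contributes $A_\sigma\,\sgn(\sigma)\,\bigl([e_P, \e{x}]\, e_Q - [e_{Q+1}, \e{x}]\, e_{P-1}\bigr)\,B_\sigma$, where $(P, Q)$ is determined by $(\sigma(j), \sigma(j+1))$ and $e_d := e_d([m])$. Thus it suffices to prove the key identity
\[
[e_P([m]), \e{x}]\, e_Q([m]) \equiv [e_{Q+1}([m]), \e{x}]\, e_{P-1}([m]) \qquad (\star)
\]
for all $P, Q \ge 0$ and $m < x$. The basic case of the lemma ($\alpha=(a,a)$, $l=2$) is the specialization $P = Q = a$.

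Next I would prove $(\star)$ by induction on $m$. The base $m = 0$ is immediate since $e_d([0]) = \delta_{d,0}$. For the inductive step, \eqref{e ek induction} yields the commutator recursion
\[
[e_d([m]), \e{x}] = \e{m}\,[e_{d-1}([m-1]), \e{x}] + [\e{m}, \e{x}]\,e_{d-1}([m-1]) + [e_d([m-1]), \e{x}].
\]
Substituting into both sides of $(\star)$ and expanding fully, I would group the resulting terms by the number of $\e{m}$ factors that appear. The $\e{m}$-free terms reduce to the inductive hypothesis at level $[m-1]$. The terms linear in $\e{m}$ combine using commutativity of the $e_d([m-1])$'s (from \cite{LamRibbon}) together with the inductive hypothesis applied after swapping the roles of $P$ and $Q$. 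The terms with $\e{m}$ appearing on both sides of $\e{x}$ simplify using $\e{m}^2 = 0$ and the $q$-commutation $u_m u_x = q^{-1} u_x u_m$, valid when $0 < x - m < k$.

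The main obstacle will be the case $m = x - k$, where $[\e{m}, \e{x}] = u_{x-k} u_x - u_x u_{x-k}$ cannot be reduced to a scalar multiple of a single monomial in $\U/\Ilam{k}$ (neither $q$-commutation nor far commutation applies). Handling these ``irreducible'' commutator contributions will require the ribbon relations $u_{x-k} u_x u_{x-k} = 0$ and $u_x u_{x-k} u_x = 0$, together with commutativity of the $e$-factors, to force pairwise cancellation among the surviving terms. I anticipate this step will require either a secondary induction on $d$ or a careful case analysis of how $u_{x-k}$ interacts with the subtableaux of indices in $[m-1]$ that appear in the expansion.
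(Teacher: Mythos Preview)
Your reduction to the identity $(\star)$ is correct and clean: the pairing $\sigma \leftrightarrow \sigma(j,j+1)$ does collapse both statements of the lemma to
\[
[e_P([m]),\e{x}]\,e_Q([m]) \equiv [e_{Q+1}([m]),\e{x}]\,e_{P-1}([m]).
\]
However, the proof of $(\star)$ itself is incomplete. You explicitly leave the case $m=x-k$ unresolved (``I anticipate this step will require\ldots''), and even in the cases where $[\e{m},\e{x}]$ is a scalar multiple of a monomial, the grouping you describe is only a sketch. Since the whole content of the lemma sits in $(\star)$, this is a genuine gap, not a routine detail.

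The paper avoids induction on $m$ entirely by a short determinantal trick. Enlarge the alphabet to $S=\{x\}\cup[m]$ and use $(\diamond)$ to get
\[
0 \equiv J_{(a,a+1)}(S,S).
\]
Peel off $\e{x}$ (the largest element of $S$) from column~1 and then from column~2 to obtain four terms. Two of them vanish immediately: one is $\e{x}\,J_{(a-1,a)}([m]\Jnot{x}[m])$, which is zero by Corollary~\ref{c temperley lieb} since every letter between the two $\e{x}$'s lies in $[m]$ and hence is $<x<x+k$; the other is $J_{(a,a+1)}([m],[m])$, zero by $(\diamond)$. The surviving terms are $J_{(a,a)}([m]\Jnot{x}[m])$ and $\e{x}\,J_{(a-1,a+1)}([m],[m])\equiv -\e{x}\,J_{(a,a)}([m],[m])$ by \eqref{e elem sym Jswap}, giving the result. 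The general statement follows by the same manipulation at positions $j,j+1$. This bypasses the delicate $m=x-k$ case altogether, because Corollary~\ref{c temperley lieb} only needs the absence of $\e{x+k}$ between the repeated $\e{x}$'s, which is automatic from $m<x$.
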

\begin{proof}
Since the proofs of both statements are essentially the same, we prove only the first to avoid extra notation. We compute
\begin{align*}
0& \equiv J_{(a, a+1)}(\{x\}\cup[m], \{x\}\cup[m])\\
&=\e{x}J_{(a-1, a+1)}([m],\{x\}\cup[m])+
J_{(a, a+1)}([m], \{x\}\cup[m])\\
&=\e{x}J_{(a-1, a)}([m]  \Jnot{x}  [m])+
\e{x}J_{(a-1, a+1)}([m],[m])+
J_{(a, a)}([m]   \Jnot{x}  [m])+
J_{(a, a+1)}([m], [m])\\
& \equiv
\e{x}J_{(a-1, a+1)}([m],[m])+
J_{(a, a)}([m]   \Jnot{x}  [m]) \\
&\equiv -\e{x}J_{(a, a)}([m],[m])+
J_{(a, a)}([m]   \Jnot{x}  [m]).
\end{align*}
The first congruence is by \eqref{e elem sym J0}.  The equalities are a 1-expansion and a 2-expansion (see \textsection\ref{ss noncommutative flagged schur} for notation). The second congruence is by Corollary~\ref{c temperley lieb} with repeated letter $\e{x}$ and \eqref{e elem sym J0}. The last congruence is by \eqref{e elem sym Jswap}.
\end{proof}


The next lemma is a slight improvement on Lemma \ref{l lambda1 eq lambda2 commute} needed to handle the case of repeated letters.
\begin{lemma}\label{l lambda1 eq lambda2 ss}
Let $k$ be a positive integer. Suppose
\begin{list}{\emph{(\alph{ctr})}}{\usecounter{ctr} \setlength{\itemsep}{2pt} \setlength{\topsep}{3pt}}
\item $\alpha$ is a weak composition with  $l$ parts and  $\alpha_j = \alpha_{j+1}$,
\item $\mathbf{n} = (n_1,\ldots, n_l)$ satisfies $0 \le n_1 \le n_2 \le \cdots \le n_l$, and $n_{j-1} < n_{j+1}-3$, and  $n_{j+1} = n_{j}+1$,
\item $\e{x}$ is a letter such that $x > n_{j+1}$ and  $x \ne n_{j+1} + k$,
\item $\e{v}\in \U$ is a word such that  $\e{vn_{j+1}} \equiv 0$.
\end{list}
Then in  the algebra $\U/\Ilam{k}$,
\begin{equation}\label{el lambda1 eq lambda2 ss}
\e{v}J_\alpha^\mathbf{n}(\Jnotb{j}{x}) =\e{v}J_\alpha^\mathbf{n}(\Jnotb{j-1}{x}).
\end{equation}
\end{lemma}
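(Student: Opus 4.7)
The plan is to reduce to Lemma \ref{l lambda1 eq lambda2 commute} by passing to an auxiliary flag vector for which its hypotheses hold directly. Set
\[ \mathbf{n}^* := (n_1,\ldots,n_{j-1},n_{j+1},n_{j+1},n_{j+2},\ldots,n_l), \]
raising $n_j$ to $n_{j+1}$; this is weakly increasing since $n_{j-1}\le n_j\le n_{j+1}$. Because $\alpha_j=\alpha_{j+1}$, $n^*_j=n^*_{j+1}=n_{j+1}$, and $x>n_{j+1}$, Lemma \ref{l lambda1 eq lambda2 commute} applies and yields $J_\alpha^{\mathbf{n}^*}(\Jnotb{j}{x})\equiv J_\alpha^{\mathbf{n}^*}(\Jnotb{j-1}{x})$. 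It therefore suffices to prove the two bridge identities
\[ \e{v}\,J_\alpha^{\mathbf{n}^*}(\Jnotb{j}{x})\equiv \e{v}\,J_\alpha^{\mathbf{n}}(\Jnotb{j}{x}) \quad\text{and}\quad \e{v}\,J_\alpha^{\mathbf{n}^*}(\Jnotb{j-1}{x})\equiv \e{v}\,J_\alpha^{\mathbf{n}}(\Jnotb{j-1}{x}); \]
chaining these three equivalences gives \eqref{el lambda1 eq lambda2 ss}.

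For the first bridge, perform a $j$-expansion via \eqref{e ek induction}: since $n_{j+1}=n_j+1>0$,
\[ e_{\alpha_j+\pi(j)-j}([n_{j+1}]) = \e{n_{j+1}}\,e_{\alpha_j+\pi(j)-j-1}([n_j]) + e_{\alpha_j+\pi(j)-j}([n_j]), \]
while the $(j{+}1)$-st factor is unchanged because $n^*_{j+1}=n_{j+1}$. Setting $E^-_j := e_{\alpha_j+\pi(j)-j-1}([n_j])$, the second piece contributes exactly $J_\alpha^\mathbf{n}(\Jnotb{j}{x})$, leaving the defect
\[ Y_j := \sum_\pi\sgn(\pi)\,E_1\cdots E_{j-1}\,\e{n_{j+1}}\,E^-_j\,\e{x}\,E_{j+1}\cdots E_l. \]
Every letter of $E_1,\ldots,E_{j-1}$ lies in $[n_{j-1}]$ and, by hypothesis (b), is $\le n_{j-1}<n_{j+1}-3$, so it commutes with $\e{n_{j+1}}$ via \eqref{e far commute}. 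Sliding $\e{n_{j+1}}$ all the way left gives $\e{v}\,Y_j = \e{v n_{j+1}}\cdot(\text{remainder})\equiv 0$ by hypothesis (d). The second bridge is entirely analogous, except that the $j$-expansion now produces
\[ Y_{j-1} := \sum_\pi\sgn(\pi)\,E_1\cdots E_{j-1}\,\e{x n_{j+1}}\,E^-_j\,E_{j+1}\cdots E_l; \]
hypothesis (c) ($x>n_{j+1}$ with $x\ne n_{j+1}+k$) lets us rewrite $\e{x n_{j+1}}\equiv c_1\,\e{n_{j+1}\,x}$ for some $c_1\in\{1,q\}$ using \eqref{e far commute} or \eqref{e q commute}, after which the same sliding-into-$\e{v}$ argument closes this case.

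The main obstacle is precisely this "slide $\e{n_{j+1}}$ into $\e{v}$" step, where all three nontrivial hypotheses (b), (c), (d) combine. Hypothesis (b) supplies the width-$3$ gap that activates far-commutation past every letter of $E_1\cdots E_{j-1}$ (the constant $3$ is tailored to the case $k=3$ of interest in this paper); hypothesis (c) allows the necessary scalar swap of $\e{x}$ and $\e{n_{j+1}}$; hypothesis (d) finally collapses the leftmost $\e{v n_{j+1}}$ to zero. Everything else---the $j$-expansion and bookkeeping of the signed sum over $\pi$---is routine once this key commutation is in hand.
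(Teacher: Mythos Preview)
Your proof is correct and takes essentially the same approach as the paper: your auxiliary flag $\mathbf{n}^*$ is exactly the paper's $\mathbf{n}_+$, and your two ``bridge identities'' obtained by a $j$-expansion are precisely the equations \eqref{e extra letter trick1}--\eqref{e extra letter trick2} that the paper derives by expanding both sides of the identity from Lemma~\ref{l lambda1 eq lambda2 commute}. The vanishing of the defect terms $Y_j$ and $Y_{j-1}$ via the swap $\e{x\,n_{j+1}}\equiv c_1\,\e{n_{j+1}\,x}$ (using (c)), far-commutation past $E_1\cdots E_{j-1}$ (using (b)), and then (d), matches the paper's argument word for word; your parenthetical remark that the constant $3$ in (b) is tailored to $k=3$ is also accurate and applies equally to the paper's own proof.
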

\begin{proof}
Set $\alpha_- = (\alpha_1,\dots,\alpha_{j-1},\alpha_j-1,\alpha_{j+1},\dots)$ and $\mathbf{n}_+ = (n_1,\ldots, n_{j-1}, n_{j+1}, n_{j+1}, n_{j+2},\ldots)$.
By Lemma \ref{l lambda1 eq lambda2 commute},
\[
\e{v}J_\alpha^{\mathbf{n}_+}(\Jnotb{j}{x}) \equiv\e{v}J_\alpha^{\mathbf{n}_+}(\Jnotb{j-1}{x}).
\]
Apply a $j$-expansion to both sides to obtain
\begin{align}
&\e{v}J_{\alpha_-}(n_1, \ldots, n_{j-1}  \Jnot{n_{j+1}}   n_{j} \Jnot{x} n_{j+1},\ldots) +\e{v}J_\alpha^\mathbf{n}(\Jnotb{j}{x}) \label{e extra letter trick1}\\
\equiv~&\e{v}J_{\alpha_-}(n_1, \ldots, n_{j-1}  \Jnot{xn_{j+1}}   n_{j},  n_{j+1},\ldots) +\e{v}J_\alpha^\mathbf{n}(\Jnotb{j-1}{x}). \label{e extra letter trick2}
\end{align}
It suffices to show that the first term of \eqref{e extra letter trick1} and the first term of \eqref{e extra letter trick2} are $\equiv0$. We only show the latter, as the former is similar and easier.
There holds
\begin{align*}
&\e{v}J_{\alpha_-}(n_1, \ldots, n_{j-1}  \Jnot{xn_{j+1}}   n_{j},  n_{j+1},\ldots)
\equiv q^a \e{v}J_{\alpha_-}(n_1, \ldots, n_{j-1}  \Jnot{n_{j+1}x}   n_{j},  n_{j+1},\ldots) \\
&\equiv q^a \e{v n_{j+1}} J_{\alpha_-}^\mathbf{n}(\Jnotb{j-1}{x}) \equiv 0,
\end{align*}
where  $a \in \{0,1\}$.
The first congruence is by (c) and the relations \eqref{e q commute} and the far commutation relations \eqref{e far commute}, the second congruence is by (b) and the far commutation relations, and the last congruence is by (d).
\end{proof}

\begin{corollary}\label{c aaa+1 commute lam}
Maintain the notation of Lemma~\ref{l lambda1 eq lambda2 ss} and assume (a)--(d) of the lemma hold. Assume in addition
\begin{list}{\emph{(\alph{ctr})}}{\usecounter{ctr} \setlength{\itemsep}{2pt} \setlength{\topsep}{3pt}}
\item[\rm (e)] $\e{w} \in \U$ such that $\e{w} = \e{x w^R}$ and the letters of $\e{w^R}$ are $> n_{j+1}+k$.
\end{list}
Then in the algebra $\U/\Ilam{k}$,
\begin{equation*}
\e{v}J_{\alpha}(n_1,\dots, n_j  \Jnot{w}   n_{j+1},\dots) = \e{v}J_{\alpha}(n_1,\dots,  n_{j-1}   \Jnot{x}   n_j  \Jnot{w^R}   n_{j+1},\dots). \\
\end{equation*}
\end{corollary}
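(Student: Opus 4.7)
The plan is to adapt the proof of Lemma \ref{l lambda1 eq lambda2 ss} by carrying $\e{w^R}$ along as a spectator factor. First observe that, because the letters of $\e{w^R}$ are all $>n_{j+1}+k$ by hypothesis (e) while every letter in $e_d([n_{j+1}])$ is $\le n_{j+1}$, the far commutation relations \eqref{e far commute} allow $\e{w^R}$ to commute with $e_d([n_{j+1}])$. Applying this term by term in the sum-over-$\pi$ defining the augmented Schur function, we obtain
\[
\e{v}J_{\alpha}(\ldots, n_j\Jnot{w}n_{j+1},\ldots)\equiv\e{v}J_{\alpha}(\ldots, n_j\Jnot{x}n_{j+1}\Jnot{w^R}n_{j+2},\ldots),
\]
and, symmetrically,
\[
\e{v}J_{\alpha}(\ldots, n_{j-1}\Jnot{x}n_j\Jnot{w^R}n_{j+1},\ldots)\equiv\e{v}J_{\alpha}(\ldots, n_{j-1}\Jnot{x}n_j, n_{j+1}\Jnot{w^R}n_{j+2},\ldots).
\]
The Corollary thereby reduces to proving equality of the two right-hand sides, which is the precise analog of Lemma \ref{l lambda1 eq lambda2 ss} with an extra spectator slot $\Jnot{w^R}$ inserted between $n_{j+1}$ and $n_{j+2}$.

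I would then prove this augmented analog by mimicking the proof of Lemma \ref{l lambda1 eq lambda2 ss} essentially verbatim. The first substep is the augmented analog of Lemma \ref{l lambda1 eq lambda2 commute}: starting from
\[
0\equiv J_{(\ldots,\alpha_j,\alpha_{j+1}+1,\ldots)}\bigl(\ldots, n_{j-1}, \{x\}\cup[m], \{x\}\cup[m]\Jnot{w^R}n_{j+2}, \ldots\bigr),
\]
a valid instance of the augmented \eqref{e elem sym J0} (the hypotheses are satisfied because the slot between positions $j$ and $j+1$ is empty and the alphabets there coincide), perform a $j$-expansion followed by a $(j+1)$-expansion. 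Of the four resulting terms, the one carrying two copies of $\e{x}$ at the slots between $(j{-}1,j)$ and $(j,j{+}1)$ vanishes by Corollary \ref{c temperley lieb}: the letters separating the two $\e{x}$'s come from $e_d([m])$ and so lie in $[m]$, a set disjoint from $\{x+k\}$ since $x>m$. Another term vanishes by the augmented \eqref{e elem sym J0}, and the remaining two are related by the augmented \eqref{e elem sym Jswap}, establishing the augmented analog of Lemma \ref{l lambda1 eq lambda2 commute}. Then, as in the proof of Lemma \ref{l lambda1 eq lambda2 ss}, one passes from $\mathbf{n}_+$ back to $\mathbf{n}$ by $j$-expansions on both sides. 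This produces two extra terms, each containing an inserted $\e{n_{j+1}}$ that, by far commutation (valid by (b), since $n_{j-1}<n_{j+1}-k$) together with a single use of \eqref{e q commute} on the RHS extra term to pass $\e{n_{j+1}}$ through $\e{x}$, can be pulled to the leftmost position, yielding $q^a\e{v n_{j+1}} J_{\alpha_-}(\cdots)\equiv 0$ by hypothesis (d) for some $a\in\{0,1\}$. This completes the proof of the augmented analog and hence of the Corollary.

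The main obstacle is essentially bookkeeping: verifying that every application of \eqref{e elem sym J0}, \eqref{e elem sym Jswap}, and Corollary \ref{c temperley lieb} from the original proofs of Lemmas \ref{l lambda1 eq lambda2 commute} and \ref{l lambda1 eq lambda2 ss} remains valid in the augmented setting. This is precisely what the alignment step achieves: once $\e{w^R}$ has been moved to the slot between $n_{j+1}$ and $n_{j+2}$, every subsequent manipulation takes place strictly at positions $j{-}1$, $j$, $j{+}1$, where $\e{w^R}$ is absent, so the structural arguments transfer without modification.
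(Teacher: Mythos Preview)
Your proposal is correct and follows essentially the same route as the paper: both begin by using far commutation to slide $\e{w^R}$ past $e_d([n_{j+1}])$ into the slot between $n_{j+1}$ and $n_{j+2}$, then invoke Lemma~\ref{l lambda1 eq lambda2 ss} with this spectator present, and finally slide $\e{w^R}$ back. The paper simply asserts in one parenthetical that ``the proof of the lemma still works with the word $\e{w^R}$ present,'' whereas you spell out in detail why each step of the proofs of Lemmas~\ref{l lambda1 eq lambda2 commute} and~\ref{l lambda1 eq lambda2 ss} survives the insertion of the spectator---a welcome elaboration, but not a different argument.
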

\begin{proof}
We compute in  $\U/\Ilam{k}$,
\begin{align*}
& \e{v}J_{\alpha}(n_1,\dots, n_j  \Jnot{w}   n_{j+1},\dots) \\
&= \e{v}J_{\alpha}(n_1,\dots,  n_{j-1}, n_j   \Jnot{x}   n_{j+1}   \Jnot{w^R}   \dots) \\
&= \e{v}J_{\alpha}(n_1,\dots,  n_{j-1}   \Jnot{x}   n_j, n_{j+1}   \Jnot{w^R}   \dots) \\
&= \e{v}J_{\alpha}(n_1,\dots,  n_{j-1}   \Jnot{x}   n_j   \Jnot{w^R}   n_{j+1},\dots).
\end{align*}
The first and third equalities are by the far commutation relations and the second equality is by Lemma~\ref{l lambda1 eq lambda2 ss} (the proof of the lemma still works with the word  $\e{w^R}$ present).
\end{proof}

For a weak composition or partition $\alpha=(\alpha_1,\dots,\alpha_l)$, define $\alpha_{l+1} = 0$.
\begin{theorem}\label{t flag Schur k3 technical}
Let  $\lambda' \setminus \alpha'$ be a restricted shape and let  $l$ be the number of parts of  $\lambda$.
Set $j = \min(\{i \mid \alpha_i > 0, \ \alpha_i \ge \alpha_{i+1}\} \cup \{l+1\} )$ and $j' = \max(\{i \mid \alpha_i < \lambda_i\} \cup \{0\})$  (see Figure~\ref{f main proof} and the discussion following Remark \ref{r main theorem}).
Suppose
\begin{list}{\emph{(\roman{ctr})}}{\usecounter{ctr} \setlength{\itemsep}{2pt} \setlength{\topsep}{3pt}}
\item  $\alpha$ is of the form
\begin{align*}
&(0,\dots,0,1,2,\dots,a,a,a+1,a+2,\dots,\alpha_{j'}, \lambda_{j'+1},\lambda_{j'+2},\dots), \quad \text{ or } \\
&(0,\dots,0,1,2,\dots,\alpha_{j'}, \lambda_{j'+1},\lambda_{j'+2},\dots),
\end{align*}
where $\alpha_{j'} \ge \alpha_{j'+1}-1 = \lambda_{j'+1}-1$,
the initial run of 0's may be empty, and
on the top line (resp. bottom line) $j$ is  $< j'$ and is the position of the first  $a$ (resp.  $j$ is  $j'$ or  $j'+1$).
\item $R$ is an RSST of shape  $\lambda' \setminus \alpha'$ with entries $r_1 = R_{\alpha_1+1,1}, \ldots, r_{j'} = R_{\alpha_{j'}+1,j'}$ along its northern border,
\item $\e{vw}$ is a square respecting reading word of $R$ such that $\e{w}$ is a subsequence of $\e{r_{j+1} \cdots r_{j'}}$ which contains $\e{r_{j+1}}$ if $r_{j+1}>r_j+1$,
\item $\mathbf{n} =(n_1,\ldots, n_l)$ is a tuple which satisfies $0 \leq n_1 \leq n_2 \leq \cdots \leq n_l$, and
\item $n_c = r_c-1$ for  $c \in [j']$ with the exception that  $n_j$ may be  $< r_j -1$ if  $\e{w}$ is empty or $w_1 \ne r_j+1$.
\end{list}
Then in the algebra $\U/\Ilam{3}$,
\begin{equation}\label{e new statement}
\e{v}J_\alpha^{\mathbf{n}}(\Jnotb{j}{w})
 = \displaystyle \sum_{\substack{T \in \RSST_{\lambda'},
\ T_{\lambda' \setminus \alpha'} = R \\ T_{\alpha'} \in \Tab_{\alpha'}^{\mathbf{n}}} } \sqread(T).
\end{equation}
\end{theorem}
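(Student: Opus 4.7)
\textbf{Proof plan for Theorem \ref{t flag Schur k3 technical}.}

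The plan is to prove the identity by induction on $|\alpha|$. The driving combinatorial idea is that the distinguished index $j$ marks the column of $\alpha'$ in which we will place the next entry of the growing tableau: doing a $j$-expansion on $J_\alpha^{\mathbf{n}}(\Jnotb{j}{w})$ via \eqref{e ek induction} separates the sum on the right into those $T$ whose entry in the topmost still-unfilled cell of column $j$ is exactly $n_j$, and those for which it is strictly smaller. These are the two alternatives that must be matched to the two terms of the $j$-expansion.

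For the base case, we take $j=l+1$, which by the definition of $j$ and condition (i) forces $\alpha$ to be a strictly increasing staircase $(0,\dots,0,1,2,\dots,\alpha_{j'},\lambda_{j'+1},\dots,\lambda_l)$; when additionally $\alpha_{j'}=0$ there is nothing to fill and the identity follows from Theorem~\ref{t square respecting connected} (equality of all square-respecting reading words in $\U/\Ilam{3}$). When $\alpha_{j'}>0$ the base case reduces, using the column-elementary commutation and \eqref{e elem sym Jswap}, to a smaller instance that has already been handled, so this really is a degenerate base. For the inductive step, when $j\le j'$, a $j$-expansion yields the split
\[
\e{v}\,J_\alpha^{\mathbf{n}}(\Jnotb{j}{w})
\ \equiv\ \e{v}\,J_{\alpha^-}^{\mathbf{n}}\bigl(\Jnot{\e{n_j}}\,\Jnotb{j}{w}\bigr)\ +\ \e{v}\,J_\alpha^{\mathbf{n}^-}(\Jnotb{j}{w}),
\]
where $\alpha^-$ decreases $\alpha_j$ by $1$ and $\mathbf{n}^-$ decreases $n_j$ by $1$. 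The first term should correspond to those $T$ with $T_{\alpha_j,j}=n_j=r_j-1$ (after the cell has been added to $R$), and the second to those with $T_{\alpha_j,j}<n_j-1$.

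To reach the inductive hypothesis, we must move the $\e{n_j}$ in the first term from its augmentation slot into a position where the enlarged tableau $R\sqcup{\tiny \tableau{\e{n_j}}}_{\alpha_j,j}$ plays the role of $R$ for $\alpha^-$. This rearrangement is exactly what Corollary~\ref{c aaa+1 commute lam} is designed for: with the new staircase entry at $(\alpha_j,j)$ appended to the reading word as the letter preceding the updated $\e{w}$, the hypothesis (iii) on which letters of $\e{r_{j+1}\cdots r_{j'}}$ appear in $\e{w}$ is preserved. Condition (v) allowing $n_j$ to drop below $r_j-1$ when $\e{w}$ is empty or $w_1\ne r_j+1$ is what makes the second term's $\mathbf{n}^-$ still admissible. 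The auxiliary Lemmas~\ref{l lambda1 eq lambda2 commute} and \ref{l lambda1 eq lambda2 ss} handle the cases where the staircase is about to pass through positions with $\alpha_j=\alpha_{j+1}$ and $n_j=n_{j+1}$, absorbing potentially vanishing terms via \eqref{e elem sym J0} and Corollary~\ref{c temperley lieb}; applying (iv) (second bullet) and (v) for $\e{w}$ non-empty with $w_1=r_j+1$ ensures the hypotheses of Lemma~\ref{l lambda1 eq lambda2 ss} are met.

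Finally, one checks that the reading word attached to the enlarged $R$ is square-respecting. The new letter $\e{n_j}$ is placed just before $\e{w}$ in the reading word, and the only way it can fail the square-respecting condition is if it is the head of a $\nwarr$ arrow from a cell in column $j+1$ at the old northern border or the tail of a $\searr$ arrow into such a cell; both situations are controlled by the subsequence condition (iii) on $\e{w}$ and the presence/absence of $r_{j+1}$ at its front. Proposition~\ref{p squares are strict} rules out the arrow squares that would force vanishing, so the remaining nonzero terms are in bijection with valid $T$. The genuinely hard part is the bookkeeping in this last step: guaranteeing that after each $j$-expansion and each application of Corollary~\ref{c aaa+1 commute lam} or Lemma~\ref{l lambda1 eq lambda2 ss}, the triple $(\alpha,R,\e{vw},\mathbf{n})$ still satisfies the precise forms in (i)--(v), and that the arrow structure created by the newly placed cell interacts correctly with the reading word produced by the inductive step. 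This is why conditions (i) and (v) are stated with two cases and a special allowance for $n_j<r_j-1$: they are exactly the minimal set of invariants the $j$-expansion preserves.
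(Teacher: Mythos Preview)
Your broad strategy---induct, do a $j$-expansion, and match the two resulting terms to the dichotomy $T_{\alpha_j,j}=n_j$ versus $T_{\alpha_j,j}<n_j$---is the paper's strategy. But the plan has real gaps. First, the induction must be on $|\alpha|$ \emph{and} the $n_i$: the second term of the $j$-expansion has the same $\alpha$ and a smaller $n_j$, so inducting on $|\alpha|$ alone does not cover it. Second, and more seriously, you do not address what happens when $R\sqcup{\tiny\tableau{\e{n_j}}}_{\alpha_j,j}$ fails to be an RSST. There are two obstructions: (A) $n_j\le r_{j-1}$ (row-strictness fails) and (B) $n_j=r_{j+1}-2$ (the diagonal $+3$ condition fails). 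Case (B) is the heart of the proof: one must show the first $j$-expansion term vanishes in $\U/\Ilam{3}$ by a further double $(j{+}1)$-expansion, killing three of four terms via Corollary~\ref{c temperley lieb} and \eqref{e elem sym J0}, and the last via Lemma~\ref{l lambda1 eq lambda2 commute} together with the key fact $\e{v r_j}\equiv 0$ (obtained from Corollary~\ref{c removable 0} applied to $R$ minus its northeasternmost diagonal). Your sketch skips this entirely.

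Two further corrections. Corollary~\ref{c aaa+1 commute lam} is not used to move $\e{n_j}$; in the generic case $\e{w}$ empty or $w_1>r_j+1$, the letters of $\e{w}$ are all $>n_j+2$ and the far commutation relations alone push $\e{w}$ across the $[n_j{-}1]$ column to produce $\e{n_jw}$. Corollary~\ref{c aaa+1 commute lam} is reserved for the special case $w_1=r_j+1$, where $w_1-n_j=2$ blocks far commutation; there it slides $\e{w_1}$ leftward (using Lemma~\ref{l lambda1 eq lambda2 ss}, whose hypothesis $\e{v n_{j+1}}\equiv 0$ is again the fact $\e{v r_j}\equiv 0$), reducing to the generic case. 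Finally, Proposition~\ref{p squares are strict} is not used in the proof at all---the paper says so explicitly---so it cannot be what rules out the bad cases.
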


To parse this statement, it is  instructive to first understand the case when $j\ge j'$
(which implies $\e{w}$ is empty) and $\alpha_2 > 0$.  In this case the theorem becomes
\[
\begin{array}{c}
\parbox{14.6cm}{
Suppose $\lambda' \setminus \alpha'$ is a restricted shape with $\alpha_1 = \alpha_2-1 = \cdots = \alpha_{j}-j+1$ and $\alpha_j \geq \alpha_{j+1} \geq \cdots \geq \alpha_{l} > \alpha_{l+1} = 0$.
Let  $R$ be an  RSST of shape  $\lambda' \setminus \alpha'$ with entries $r_1, \ldots, r_{j'}$ along its northern border.
Suppose $\mathbf{n} = (n_1,\ldots, n_l)$ satisfies $0 \leq n_1 \leq n_2 \leq \cdots \leq n_l$,
$n_c = r_c-1$ for  $c \in [j-1]$, and  $n_j < r_j$.
Then in $\U/\Ilam{3}$,
}
\\ \\[-2mm]
\sqread(R) J_\alpha^{\mathbf{n}}
 = \displaystyle \sum_{\substack{T \in \RSST_{\lambda'},
\ T_{\lambda' \setminus \alpha'} = R \\ T_{\alpha'} \in \Tab_{\alpha'}^{\mathbf{n}}} } \sqread(T).
\end{array}
\]

\begin{remark}
\label{r main theorem}
\
\begin{list}{\rm{(\alph{ctr})}}{\usecounter{ctr} \setlength{\itemsep}{2pt} \setlength{\topsep}{3pt}}
\item Theorem~\ref{t flag Schur k3} is the special case of Theorem~\ref{t flag Schur k3 technical} when $\alpha = \lambda$ (the  $\lambda$ and  $\lambda'$ of Theorem \ref{t flag Schur k3} must be interchanged to match the notation here).
\item A reading word  $\e{vw}$ as in (iii) always exists---for instance take  $\e{vw} = \sqread(R)$ with  $\e{w}$ the subsequence of
$\e{r_{j+1} \cdots r_{j'}}$ consisting of those entries which are not the tail of a  $\nwarr$ arrow of  $R$.
\item For any  $\e{vw}$ satisfying the the assumptions of the theorem, $\e{w}$ does not contain the tail of a $\nwarr$ arrow of $R$.
\item It follows from the theorem that  $\e{v}J_\alpha^{\mathbf{n}}(\Jnotb{j}{w}) = 0$ if  $R$ cannot be completed to an RSST of shape $\lambda'$, however we purposely do not make this assumption so that it does not have to be verified at the inductive step.
\end{list}
\end{remark}

The assumptions on $j$, $j'$, and  $\alpha$ look complicated, but their purpose is simply to peal off the entries of a tableau of shape  $\lambda'$ one diagonal at a time, starting with the southwesternmost diagonal, and reading each diagonal in the $\nwarr$ direction (see Figure~\ref{f main proof}).
The proof goes by induction, pealing off one letter at a time from $J_\alpha^\mathbf{n}$, in the order just specified, and incorporating them into  $R$.  The index  $j$ indicates the column of the next letter to be removed.

The reader is encouraged to follow along the proof below with Example \ref{ex inductive tree}.

\begin{proof}[Proof of Theorem \ref{t flag Schur k3 technical}]
Write $J$ for $J_{\alpha}^{\mathbf{n}}(\Jnotb{j}{w})$.
The proof is by induction on $|\alpha|$ and the $n_i$.
If  $|\alpha|=0$,  $J_\alpha^\mathbf{n}$ is (a noncommutative version of) the determinant of an upper unitriangular matrix, hence  $J = \e{w}$. The theorem then
states that  $\e{vw} \equiv \sqread(R)$, which holds by Theorem \ref{t square respecting connected}.

Next consider the case $n_1= 0$ and  $\alpha_1>0$.  This implies that  $J_\alpha^\mathbf{n}$ is (a noncommutative version of)
the determinant of a matrix whose first row is all 0's, hence $J=0$.
The right side of \eqref{e new statement} is also 0 because $\Tab_{\alpha'}^\mathbf{n}$ is empty for $n_1= 0$ and  $\alpha_1>0$.

If $n_i=n_{i+1}$ for any $i \ne j$ such that  $\alpha_i = \alpha_{i+1}-1$, then $J=0$ by \eqref{e elem sym J0}. To see that the right side of \eqref{e new statement} is also 0 in this case, observe that if  $T $ is an RSST from this sum, then
\[n_i+1 = r_{i} < T_{\alpha_{i+1},i+1} \leq n_{i+1} = n_i,\]
hence the sum is empty.

By what has been said so far, we may assume  $|\alpha| > 0$, $\alpha_j > 0$, and $0 \le n_{j-1}<n_j$ if  $j > 1$ and  $0 < n_j$ if  $j =1$.  We proceed to the main body of the proof.
There are two cases depending on whether  ($\e{w}$ is empty or $w_1 > r_j +1$) or  $w_1 = r_j +1$.
Set $\mathbf{n}_- = (n_1, n_2, \cdots, n_{j-1}, n_j-1,n_{j+1},\cdots,n_l)$ and  $\alpha_- = (\alpha_1,\dots, \alpha_{j-1}, \alpha_j-1, \alpha_{j+1},\dots)$.

\textbf{Case  $\e{w}$ is empty or $w_1 > r_j +1$.}

There are three subcases depending on whether $j=1$,  ($j>1$ and $\alpha_j = 1$), or  ($j > 1$ and  $\alpha_j > 1$).  We argue only the last, as the first two are similar and easier.
A $j$-expansion yields
\begin{align}
\e{v}J=~&
\e{v}J_{\alpha_-}(n_1,\dots, n_{j-1}   \Jnot{n_j}   n_j-1 \Jnot{w} n_{j+1},\dots)
+\e{v}J_{\alpha}^{\mathbf{n}_-}(\Jnotb{j}{w})\label{e case no square-1}\\
\equiv~&
\e{v}J_{\alpha_-}(n_1,\dots, n_{j-1} \Jnot{n_jw} n_j-1,n_{j+1},\dots)
+\e{v}J_{\alpha}^{\mathbf{n}_-}(\Jnotb{j}{w}) \label{e case no square0} \\
\equiv~&
 \sum_{\substack{ T \in \RSST_{\lambda'}, \ T_{\lambda' \setminus \alpha'_-} = R\, \sqcup \,{\tiny\tableau{\e{n_j}}}_{\alpha_j, j}\\ T_{\alpha'_-} \in \Tab_{\alpha'_-}^{\mathbf{n}_-}}} \sqread(T)
+  \sum_{\substack{T \in \RSST_{\lambda'}, \ T_{\lambda' \setminus \alpha'} = R \\ T_\alpha' \in \Tab_{\alpha'}^{\mathbf{n}_-}}} \sqread(T), \label{e case no square}
\end{align}
The first congruence is by the far commutation relations since the letters of  $\e{w}$ are  $> n_j +2$.
The second term of \eqref{e case no square0} is congruent ($\!\bmod{\, \Ilam{3}}$) to  the
second sum of \eqref{e case no square} by induction.
The conditions of the theorem are satisfied here: (i)--(iii) are clear,  (iv) holds since  $n_{j-1} < n_j$, and (v) holds as $\e{w}$ is empty or  $w_1 \ne r_j+1$ so it is okay that  $(n_-)_{j} < r_j-1$.

We next claim that either the first term of \eqref{e case no square0} satisfies conditions (i)--(v) of the theorem (with   $\alpha_-$ in place of  $\alpha$, $\mathbf{n}_-$ in place of  $\mathbf{n}$, $R \sqcup {\tiny\tableau{\e{n_j}}}_{\alpha_j, j}$ in place of  $R$,  $\e{n_jw}$ in place of  
$\e{w}$,  $j-1$ in place of   $j$), or (ii) fails and the first term of \eqref{e case no square0} is 0.  This will show that the first term of \eqref{e case no square0} is congruent to the first sum of \eqref{e case no square} (by induction in the former case, and because
both quantities are  $\equiv$ 0 in the latter case).
We now assume that (ii) holds, i.e. $R \sqcup {\tiny\tableau{\e{n_j}}}_{\alpha_j, j}$ is an RSST, and we verify the remaining conditions.
Conditions (i) and (v) are clear, and (iv) follows from $n_{j-1}< n_j$.
Next we check (iii), which requires showing that $\e{n_j w}$ is the end of a square respecting reading word of  $R \sqcup {\tiny\tableau{\e{n_j}}}_{\alpha_j, j}$ satisfying an extra
condition.  There are two ways this can fail: either (I) $R \sqcup {\tiny\tableau{\e{n_j}}}_{\alpha_j, j}$ has a $\searr$ arrow from  $n_j$ to  $r_{j+1}$ and  $w_1\ne r_{j+1}$, or (II) $R \sqcup {\tiny\tableau{\e{n_j}}}_{\alpha_j, j}$ has a $\nwarr$ arrow from  $r_{j+1}$ to  $n_j$ and  $r_{j+1} = w_1$; (I) implies  $r_{j+1} > r_j +1$ so by assumption (iii) of the theorem $w_1 = r_{j+1}$,  thus (I) cannot occur; (II) implies  $w_1 = r_j +1$, contradicting our assumption $w_1 > r_j+1$, thus (II) cannot occur.


It remains to show that if (ii) does not hold, i.e. $R \sqcup {\tiny\tableau{\e{n_j}}}_{\alpha_j, j}$ is not an RSST, then  the first term of \eqref{e case no square0} is  $\equiv$ 0.
There are two ways $R \sqcup {\tiny\tableau{\e{n_j}}}_{\alpha_j, j}$ can fail to be an RSST:
(A)  $n_j \leq r_{j-1}$, 
or (B)  $n_j = r_{j+1}-2$ (and  $j < j'$).

In the case (A) holds, we have $n_j \le r_{j-1} = n_{j-1}+1$; in fact, since $n_{j-1} < n_j$, there holds $n_{j-1}+1 = r_{j-1}= n_j$.
By \eqref{e case no square-1} and \eqref{e case no square0},
it suffices to show  $\e{v}J \equiv 0$ since
$\e{v}J_{\alpha}^{\mathbf{n}_-}(\Jnotb{j}{w}) \equiv 0$ by \eqref{e elem sym J0} as
$(n_-)_{j-1} = (n_-)_j$ and  $\alpha_{j-1} = \alpha_{j}-1$.  Let  $\hat{R}$ be the canonical sub-RSST of  $R$ with reading word  $\e{v}$.
Corollary \ref{c removable 0} with  RSST  $\hat{R}$ and  $z$ the cell containing  $r_{j-1}$ implies  $\e{\e{v}r_{j-1}} \equiv 0$, hence
\begin{equation}\label{e caseA}
0 \equiv \e{v r_{j-1}} J_{\alpha-\epsilon_{j-1}}^\mathbf{n}(\Jnotb{j}{w})  \equiv\e{v}J_{\alpha-\epsilon_{j-1}}(n_1, \ldots, n_{j-2}  \Jnot{r_{j-1}}   n_{j-1},n_j \Jnot{w} \ldots),
\end{equation}
where the last congruence is by the far commutation relations since  $n_{j-2} = r_{j-2}-1 \le r_{j-1}-4$;
here and throughout the proof, $\epsilon_i$ denotes the vector with a 1 in the  $i$-th position and 0's elsewhere.

Next, we compute
\[ 0 \equiv\e{v}J_\alpha(n_1, \ldots,n_{j-2},n_j,n_j \Jnot{w} \cdots) =\e{v}J_{\alpha-\epsilon_{j-1}}(n_1, \ldots, n_{j-2}  \Jnot{r_{j-1}}   n_{j-1},n_j \Jnot{w} \ldots)+\e{v}J \equiv \e{v}J.
\]
The first congruence uses \eqref{e elem sym J0},
the equality is a $j-1$-expansion, and the last congruence is by \eqref{e caseA}.

In the case that (B) holds, we must have  $n_j+1 = r_j = r_{j+1}-1$.  This, together with $r_{j+2} \geq r_{j+1}+3$, implies
\begin{equation}\label{e diag word no residue}
\text{$\e{r_{j+1} \cdots r_{j'}}$ does not contain  $r_j+3$ or  $r_j-3$},
\end{equation}
which will be important later.
Apply a  $j+1$-expansion twice to the first term of \eqref{e case no square0} and for convenience set  $m = n_j$ (note that  $n_{j+1} = r_{j+1}-1 = n_j+1$):
\begin{align*}
&\e{v}J_{\alpha_-}(n_1,\dots, n_{j-1}  \Jnot{n_jw}  n_j-1,n_{j+1},\dots)\\
&=\e{v}J_{\beta}(n_1,\dots, n_{j-1}  \Jnot{n_jw}  n_j-1 \Jnot{n_{j+1} (n_{j+1}-1)} n_{j+1}-2,\dots) \\
&+\e{v}J_{\gamma}(n_1,\dots, n_{j-1}  \Jnot{n_jw}  n_j-1  \Jnot{n_{j+1} }   n_{j+1}-2,\dots) \\
&+\e{v}J_{\gamma}(n_1,\dots, n_{j-1}  \Jnot{n_jw}  n_j-1  \Jnot{n_{j+1}-1}   n_{j+1}-2,\dots) \\
&+\e{v}J_{\alpha_-}(n_1,\dots, n_{j-1}  \Jnot{n_jw}  n_j-1, n_{j+1}-2,\dots)\\
&=\e{v}J_{\beta}(n_1,\dots, n_{j-1}  \Jnot{mw}  m-1  \Jnot{r_j m}   m-1,\dots) \\
&+\e{v}J_{\gamma}(n_1,\dots, n_{j-1}  \Jnot{mw}  m-1  \Jnot{r_j}   m-1,\dots) \\
&+\e{v}J_{\gamma}(n_1,\dots, n_{j-1}  \Jnot{mw}  m-1  \Jnot{m}   m-1,\dots) \\
&+\e{v}J_{\alpha_-}(n_1,\dots, n_{j-1}  \Jnot{mw}  m-1, m-1,\dots),
\end{align*}
where  $\beta = \alpha_- - 2\epsilon_{j+1}$ and  $\gamma = \alpha_- - \epsilon_{j+1}$.
The first and third terms are  $\equiv 0$ by Corollary \ref{c temperley lieb} with  repeated letter $\e{m}$. The fourth term is  $\equiv 0$ by \eqref{e elem sym J0}.
The second term is equal to  $\e{v}J_{\gamma}(n_1,\dots, n_{j-1}   \Jnot{mwr_j}   m-1, m-1,\dots)$ by Lemma \ref{l lambda1 eq lambda2 commute}.  Since $n_{j-1} = r_{j-1}-1 \leq r_j-4$ and \eqref{e diag word no residue} holds, this is equal in  $\U/\Ilam{3}$ to an expression beginning with  $\e{vr_j}$ times some power of  $q$.

We claim that $\e{vr_j} \equiv 0$, as follows:
by Theorem \ref{t square respecting connected}, $\e{v}\equiv \e{v'} \e{w'}$, where  $\e{v'}\e{w'}$ is a square respecting reading word  of $R$ such that $\e{w'} \e{w}$ is a permutation of $r_{j+1} \cdots r_{j'}$.
Let  $R'$ be the canonical sub-RSST of  $R$ with reading word $\e{v'}$ i.e. the RSST obtained from  $R$ by deleting its northeasternmost diagonal.
Now apply Corollary \ref{c removable 0} with RSST  $R'$ and  $z$ the cell containing  $\e{r_j}$ to obtain  $\e{v'r_j} \equiv 0$, hence
\[\e{vr_j} \equiv \e{v'}\e{w' r_j} \equiv q^a \e{v'r_j w'} \equiv 0, \]
where second congruence is by \eqref{e diag word no residue} ($a$ is some integer).
This completes the proof that the first term of \eqref{e case no square0} is  $\equiv 0$ when (B) holds.
This, in turn, completes the proof that the first term of \eqref{e case no square0} is congruent to the first sum of \eqref{e case no square}.

Now \eqref{e case no square} is equal to the right side of \eqref{e new statement}
because \eqref{e case no square} is simply the result of partitioning the set  $\{T \in \RSST_{\lambda'} \mid T_{\lambda' \setminus \alpha'} = R, \ T_{\alpha'} \in \Tab_{\alpha'}^{\mathbf{n}}\}$ into two, depending on whether or not  $T_{\alpha_j,j}$ is equal to or less than  $\e{n_j}$.  This completes the case  $\e{w}$ is empty or $w_1 > r_j+1$.

\textbf{Case $w_1 = r_j +1$.}

We have $w_1 = r_{j+1}$ and $n_j = r_j-1=w_1-2 = n_{j+1}-1$. Note that \eqref{e diag word no residue} holds here as well.
We now verify the hypotheses (a)--(e) of  Corollary~\ref{c aaa+1 commute lam}.
Conditions (a) and (b) are clear from what has just been said and from  $n_{j-1} < r_{j-1} \le r_j-3= n_{j+1}-3$.  Conditions (c) and (e) follow from  \eqref{e diag word no residue}.
Condition (d) holds because the same argument given two paragraphs above also applies here to give $\e{vn_{j+1}}= \e{vr_j} \equiv 0$.

Set  $t = |\e{w}|$. Corollary \ref{c aaa+1 commute lam} yields
\begin{align}
\e{v}J_{\alpha}^{\mathbf{n}}(\Jnotb{j}{w}) \notag
&\equiv \e{v}J_{\alpha}(n_1,\dots,  n_{j-1}   \Jnot{w_1}   n_j \Jnot{w_2 \cdots w_t} n_{j+1},\dots) \\
&\equiv \e{vw_1} J_{\alpha}^{\mathbf{n}}(\Jnotb{j}{w_2 \cdots w_t}). \label{e case square}
\end{align}
The last congruence is by the far commutation relations if  $j > 1$ since $n_{j-1}<r_{j-1}\le r_j-3 = w_1-4$ (if $j=1$ there is nothing to prove).

Finally, observe that the case  ($\e{w}$ is empty or  $w_1 > r_j+1$) applies to  \eqref{e case square} (with  $\e{vw_1}$ in place of  $\e{v}$,  $\e{w_2} \cdots \e{w_t}$ in place of $\e{w}$).
Since the right side of \eqref{e new statement} depends only on $R$ and not directly on  $\e{v}$ and  $\e{w}$, \eqref{e case square} is congruent to this right side,  and this gives the desired statement in the present case  $w_1 = r_j+1$.
\end{proof}

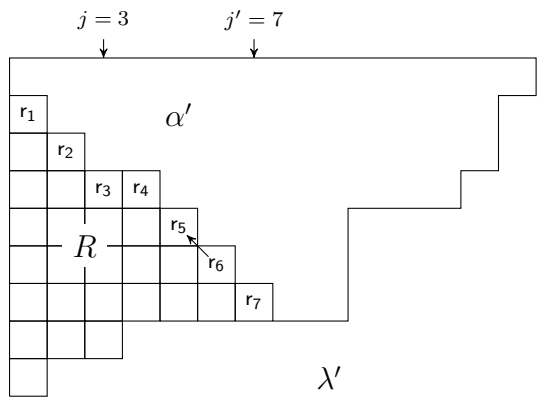
\begin{figure}
\centerfloat
\begin{tikzpicture}[xscale = 0.5,yscale = 0.5,>=stealth']
\tikzstyle{vertex}=[inner sep=3pt, outer sep=0pt]
\tikzstyle{aedge} = [draw, thin, ->,black]
\tikzstyle{edge} = [draw, thick, -,black]
\tikzstyle{dashededge} = [draw, very thick, dashed, black]
\tikzstyle{LabelStyleH} = [text=black, anchor=south, near start]
\tikzstyle{LabelStyleV} = [text=black, anchor=east, near start]

\drawcell{0}{-2}
\drawcell{0}{-1}
\drawcell{1}{-1}
\drawcell{2}{-1}
\drawcell{0}{0}
\drawcell{1}{0}
\drawcell{2}{0}
\drawcell{3}{0}
\drawcell{4}{0}
\drawcell{5}{0}
\drawcell{6}{0}
\drawcell{0}{1}
\drawcell{1}{1}
\drawcell{2}{1}
\drawcell{3}{1}
\drawcell{4}{1}
\drawcell{5}{1}
\drawcell{0}{2}
\drawcell{1}{2}
\drawcell{2}{2}
\drawcell{3}{2}
\drawcell{4}{2}
\drawcell{0}{3}
\drawcell{1}{3}
\drawcell{2}{3}
\drawcell{3}{3}
\drawcell{0}{4}
\drawcell{1}{4}
\drawcell{0}{5}
\draw (7,-0)--(9,0)--(9,3)--(12,3)--(12,4)--(13,4)--(13,6)--(14,6)--(14,7)--(0,7)--(0,6);
\draw[->] (2.5,7.5) node at (2.5,8) {\tiny$j=3$}--(2.5,7);
\draw[->] (6.5,7.5) node at (6.5,8) {\tiny$j'=7$}--(6.5,7);
\node[vertex] (a1) at (5.5,1.5) {};
\node[vertex] (a2) at (4.5,2.5) {};
\path[draw] (a1) edge[->,draw] (a2);
\node at (4.5,5.5) {$\alpha'$};
\node at (0.5,5.5) {\tiny$\e{r_1}$};
\node at (1.5,4.5) {\tiny$\e{r_2}$};
\node at (2.5,3.5) {\tiny$\e{r_3}$};
\node at (3.5,3.5) {\tiny$\e{r_4}$};
\node at (4.5,2.5) {\tiny$\e{r_5}$};
\node at (5.5,1.5) {\tiny$\e{r_6}$};
\node at (6.5,0.5) {\tiny$\e{r_7}$};
\node at (8.5,-1.5) {$\lambda'$};
\node[fill=white] at (2,2) {$R$};


\end{tikzpicture}
\vspace{.3in}
\caption{\label{f main proof} The setup of the proof of Theorem \ref{t flag Schur k3 technical} for
{\footnotesize$\lambda = (9,8,8,7,7,7,7,7,7,4,4,4,3,1)$, $\alpha = (1,2,3,3,4,5,6,7,7,4,4,4,3,1)$}.
A possibility for the arrows of $R$ is shown. A possibility for $\e{w}$ is $\e{r_4r_5r_7}$.}
 \end{figure}

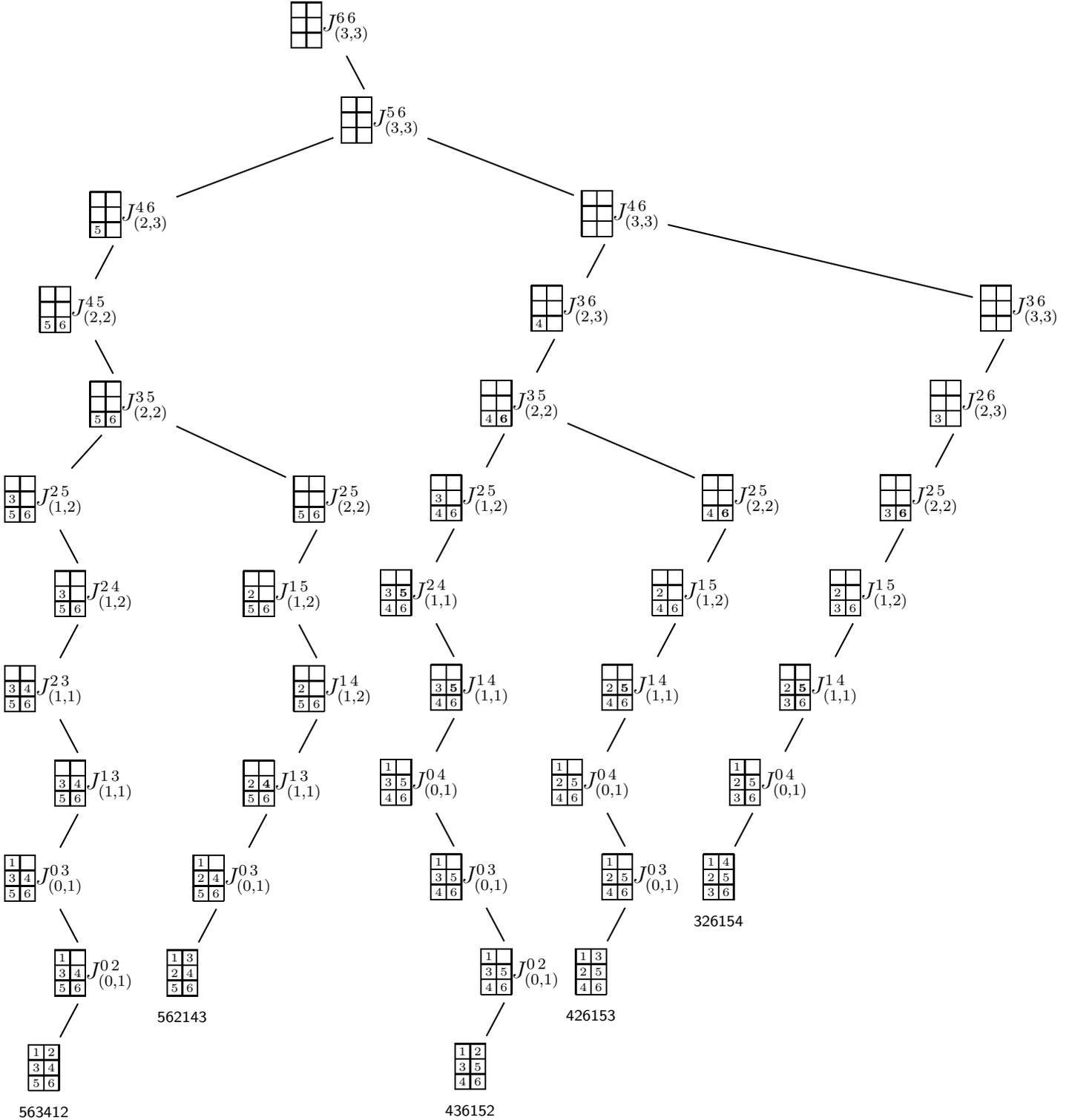
\begin{figure}
\setlength{\cellsize}{1.5ex}
\centerfloat
\begin{tikzpicture}[xscale = 0.9,yscale = 1.7]
\tikzstyle{vertex}=[inner sep=2pt, outer sep=2pt]
\tikzstyle{framedvertex}=[inner sep=3pt, outer sep=2pt, draw=gray]
\tikzstyle{aedge} = [draw, thin, ->,black]
\tikzstyle{edge} = [draw, thick, -,black]
\tikzstyle{doubleedge} = [draw, thick, double distance=1pt, -,black]
\tikzstyle{hiddenedge} = [draw=none, thick, double distance=1pt, -,black]
\tikzstyle{thinedge} = [draw, thin, -,black]
\tikzstyle{dashededge} = [draw, dashed, gray]
\tikzstyle{LabelStyleH} = [text=black, anchor=south]
\tikzstyle{LabelStyleHn} = [text=black, anchor=north]
\tikzstyle{LabelStyleV} = [text=black, anchor=east]
\tikzstyle{LabelStyleSE} = [text=black, anchor=south east]
\tikzstyle{LabelStyleSEn} = [text=black, anchor=north west]
\tikzstyle{LabelStyleSW} = [text=black, anchor=south west]
\tikzstyle{LabelStyleSWn} = [text=black, anchor=north east]

\node[vertex] (v) at (3,-3){${\Tiny\tableau{~&~\\~&~\\~&~}}J_{(3,3)}^{6\,6}$}; %
\node[vertex] (r) at (4,-4){${\Tiny\tableau{~&~\\~&~\\~&~}}J_{(3,3)}^{5\,6}$}; %
\node[vertex] (rl) at (-1,-5){${\Tiny\tableau{~&~\\~&~\\5&~}}J_{(2,3)}^{4\,6}$}; %
\node[vertex] (rll) at (-2,-6){${\Tiny\tableau{~&~\\~&~\\5&6}}J_{(2,2)}^{4\,5}$}; %
\node[vertex] (rllr) at (-1,-7){${\Tiny\tableau{~&~\\~&~\\5&6}}J_{(2,2)}^{3\,5}$};

\begin{scope}[xshift = -20, yshift = 0]
\node[vertex] (rllrl) at (-2,-8){${\Tiny\tableau{~&~\\3&~\\5&6}}J_{(1,2)}^{2\,5}$};
\node[vertex] (rllrlr) at (-1,-9){${\Tiny\tableau{~&~\\3&~\\5&6}}J_{(1,2)}^{2\,4}$};
\node[vertex] (rllrlrl) at (-2,-10){${\Tiny\tableau{~&~\\3&4\\5&6}}J_{(1,1)}^{2\,3}$};
\node[vertex] (rllrlrlr) at (-1,-11){${\Tiny\tableau{~&~\\3&4\\5&6}}J_{(1,1)}^{1\,3}$}; %
\node[vertex] (rllrlrlrl) at (-2,-12){${\Tiny\tableau{1&~\\3&4\\5&6}}J_{(0,1)}^{0\,3}$};
\node[vertex] (rllrlrlrlr) at (-1,-13){${\Tiny\tableau{1&~\\3&4\\5&6}}J_{(0,1)}^{0\,2}$};
\node[vertex,label=below:{\tiny${\e{563412}}$}] (rllrlrlrlrl) at (-2,-14){${\Tiny\tableau{1&2\\3&4\\5&6}}$};
\end{scope}

\begin{scope}[xshift = 30, yshift = 0]
\node[vertex] (rllrr) at (2,-8){${\Tiny\tableau{~&~\\~&~\\5&6}}J_{(2,2)}^{2\,5}$};
\node[vertex] (rllrrl) at (1,-9){${\Tiny\tableau{~&~\\2&~\\5&6}}J_{(1,2)}^{1\,5}$};
\node[vertex] (rllrrlr) at (2,-10){${\Tiny\tableau{~&~\\2&~\\5&6}}J_{(1,2)}^{1\,4}$};
\node[vertex] (rllrrlrl) at (1,-11){${\Tiny\tableau{~&~\\2&\mathbf{4}\\5&6}}J_{(1,1)}^{1\,3}$};
\node[vertex] (rllrrlrll) at (0,-12){${\Tiny\tableau{1&~\\2&4\\5&6}}J_{(0,1)}^{0\,3}$};
\node[vertex,label=below:{\tiny${\e{562143}}$}] (rllrrlrlll) at (-1,-13){${\Tiny\tableau{1&3\\2&4\\5&6}}$};
\end{scope}

\draw[edge] (v) to (r);
\draw[edge] (r) to (rl);
\draw[edge] (rl) to (rll);
\draw[edge] (rll) to (rllr);
\draw[edge] (rllr) to (rllrl);
\draw[edge] (rllr) to (rllrr);
\draw[edge] (rllrl) to (rllrlr);
\draw[edge] (rllrlr) to (rllrlrl);
\draw[edge] (rllrlrl) to (rllrlrlr);
\draw[edge] (rllrlrlr) to (rllrlrlrl);
\draw[edge] (rllrlrlrl) to (rllrlrlrlr);
\draw[edge] (rllrlrlrlr) to (rllrlrlrlrl);
\draw[edge] (rllrr) to (rllrrl);
\draw[edge] (rllrrl) to (rllrrlr);
\draw[edge] (rllrrlr) to (rllrrlrl);
\draw[edge] (rllrrlrl) to (rllrrlrll);
\draw[edge] (rllrrlrll) to (rllrrlrlll);

\begin{scope}[xshift = 250, yshift = -85]
\node[vertex] (rr) at (0,-2){${\Tiny\tableau{~&~\\~&~\\~&~}}J_{(3,3)}^{4\,6}$};
\node[vertex] (rrl) at (-1,-3){${\Tiny\tableau{~&~\\~&~\\4&~}}J_{(2,3)}^{3\,6}$};
\node[vertex] (rrll) at (-2,-4){${\Tiny\tableau{~&~\\~&~\\4&\mathbf{6}}}J_{(2,2)}^{3\,5}$};
\node[vertex] (rrlll) at (-3,-5){${\Tiny\tableau{~&~\\3&~\\4&6}}J_{(1,2)}^{2\,5}$};
\node[vertex] (rrllll) at (-4,-6){${\Tiny\tableau{~&~\\3&\mathbf{5}\\4&6}}J_{(1,1)}^{2\,4}$};
\node[vertex] (rrllllr) at (-3,-7){${\Tiny\tableau{~&~\\3&\mathbf{5}\\4&6}}J_{(1,1)}^{1\,4}$};
\node[vertex] (rrllllrl) at (-4,-8){${\Tiny\tableau{1&~\\3&5\\4&6}}J_{(0,1)}^{0\,4}$};
\node[vertex] (rrllllrlr) at (-3,-9){${\Tiny\tableau{1&~\\3&5\\4&6}}J_{(0,1)}^{0\,3}$};
\node[vertex] (rrllllrlrr) at (-2,-10){${\Tiny\tableau{1&~\\3&5\\4&6}}J_{(0,1)}^{0\,2}$};
\node[vertex,label=below:{\tiny${\e{436152}}$}] (rrllllrlrrl) at (-3,-11){${\Tiny\tableau{1&2\\3&5\\4&6}}$};
\end{scope}

\begin{scope}[xshift = 290, yshift = -85]
\node[vertex] (rrllr) at (1,-5){${\Tiny\tableau{~&~\\~&~\\4&\mathbf{6}}}J_{(2,2)}^{2\,5}$};
\node[vertex] (rrllrl) at (0,-6){${\Tiny\tableau{~&~\\2&~\\4&6}}J_{(1,2)}^{1\,5}$};
\node[vertex] (rrllrll) at (-1,-7){${\Tiny\tableau{~&~\\2&\mathbf{5}\\4&6}}J_{(1,1)}^{1\,4}$};
\node[vertex] (rrllrlll) at (-2,-8){${\Tiny\tableau{1&~\\2&5\\4&6}}J_{(0,1)}^{0\,4}$};
\node[vertex] (rrllrlllr) at (-1,-9){${\Tiny\tableau{1&~\\2&5\\4&6}}J_{(0,1)}^{0\,3}$};
\node[vertex,label=below:{\tiny${\e{426153}}$}] (rrllrlllrl) at (-2,-10){${\Tiny\tableau{1&3\\2&5\\4&6}}$};
\end{scope}

\begin{scope}[xshift = 220, yshift = -85]
\node[vertex] (rrr) at (9,-3){${\Tiny\tableau{~&~\\~&~\\~&~}}J_{(3,3)}^{3\,6}$};
\node[vertex] (rrrl) at (8,-4){${\Tiny\tableau{~&~\\~&~\\3&~}}J_{(2,3)}^{2\,6}$};
\node[vertex] (rrrll) at (7,-5){${\Tiny\tableau{~&~\\~&~\\3&\mathbf{6}}}J_{(2,2)}^{2\,5}$};
\node[vertex] (rrrlll) at (6,-6){${\Tiny\tableau{~&~\\2&~\\3&6}}J_{(1,2)}^{1\,5}$};
\node[vertex] (rrrllll) at (5,-7){${\Tiny\tableau{~&~\\2&\mathbf{5}\\3&6}}J_{(1,1)}^{1\,4}$};
\node[vertex] (rrrlllll) at (4,-8){${\Tiny\tableau{1&~\\2&5\\3&6}}J_{(0,1)}^{0\,4}$};
\node[vertex,label=below:{\tiny${\e{326154}}$}] (rrrllllll) at (3,-9){${\Tiny\tableau{1&4\\2&5\\3&6}}$};
\end{scope}

\draw[edge] (r) to (rr);

\draw[edge] (rr) to (rrl);
\draw[edge] (rr) to (rrr);
\draw[edge] (rrl) to (rrll);
\draw[edge] (rrll) to (rrlll);
\draw[edge] (rrll) to (rrllr);
\draw[edge] (rrlll) to (rrllll);
\draw[edge] (rrllll) to (rrllllr);
\draw[edge] (rrllllr) to (rrllllrl);
\draw[edge] (rrllllrl) to (rrllllrlr);
\draw[edge] (rrllllrlr) to (rrllllrlrr);
\draw[edge] (rrllllrlrr) to (rrllllrlrrl);
\draw[edge] (rrllr) to (rrllrl);
\draw[edge] (rrllrl) to (rrllrll);
\draw[edge] (rrllrll) to (rrllrlll);
\draw[edge] (rrllrlll) to (rrllrlllr);
\draw[edge] (rrllrlllr) to (rrllrlllrl);

\draw[edge] (rrr) to (rrrl);
\draw[edge] (rrrl) to (rrrll);
\draw[edge] (rrrll) to (rrrlll);
\draw[edge] (rrrlll) to (rrrllll);
\draw[edge] (rrrllll) to (rrrlllll);
\draw[edge] (rrrlllll) to (rrrllllll);
\end{tikzpicture}
\caption{\label{f inductive tree}
The inductive computation of $J_{(3,3)}^{6\, 6}$ from the proof of Theorem \ref{t flag Schur k3 technical}, depicted with the following conventions:  instead of writing  $\e{v}J_\alpha^\mathbf{n}(\Jnotb{j}{w})$, we write  $R J_\alpha^\mathbf{n}$ where  $R$ is an RSST with square respecting reading word $\e{vw}$; the entries of  $R$ corresponding to $\e{w}$ are in bold. A left (resp. right) branch of the tree corresponds to the first (resp. second) term of \eqref{e case no square-1}.  All branches that eventually lead to 0 have been pruned.  The words below the leaves indicate the final square respecting reading words produced by this computation.}
\end{figure}

\begin{example}\label{ex inductive tree}
Figure \ref{f inductive tree} illustrates the inductive computation of $J_{(3,3)}^{6\, 6}$ from the proof of Theorem \ref{t flag Schur k3 technical}.

\setlength{\cellsize}{1.9ex}
Write  $A,B,\dots,G$ for  $10,11,\dots, 16$.
Let  $\lambda=(4,4,4,4)$.
Let $R={\tiny\tableau{3 \\ 5 & 6 \\ 7 & 8 & F}}$; then $\sqread(R)=\e{78563F}$.
We illustrate several steps of the inductive computation of $\e{78563F} J_{(1,2,3,4)}^{2 5 E G}$ from the proof of Theorem~\ref{t flag Schur k3 technical}. After each step in which we add an entry to $R$, we record the new values of $R$, $j$, and $j'$.
We first apply the proof of the theorem to $\e{78563F} J_{(1,2,3,4)}^{2 5 E G}$  ($\e{v}=\e{78563F}$,  $\e{w}=\emptyset$,  $R$ as above, $j = 4$,  $j'=3$) and expand as in \eqref{e case no square-1}:
\begin{align}
& \e{78563F} J_{(1,2,3,4)}([2],[5],[E],[G]) \notag \\
=~& \e{78563F} \Big( J_{(1,2,3,3)}([2],[5],[E]   \Jnot{G}   [F]) + J_{(1,2,3,4)}([2],[5],[E],[F]) \, \Big) \label{e ex inductive comp}
\end{align}
Next, apply the theorem to the first term  of \eqref{e ex inductive comp}
($\e{v}= \e{78563F}$,  $\e{w}=\e{G}$, $R={\tiny\tableau{3 \\ 5 & 6 \\ 7 & 8 & F & G}}$, $j=3$, $j'=4$).
This is handled by the case  $w_1 = r_j+1$, hence the computation proceeds by applying \eqref{e case square} and then a  $3$-expansion:
\begin{align*}
& \e{78563F} J_{(1,2,3,3)}([2],[5], [E]  \Jnot{G}   [F]) \\
\equiv ~& \e{78563FG} J_{(1,2,3,3)}([2], [5], [E], [F])\\
=~& \e{78563FG} \Big( J_{(1,2,2,3)}([2], [5]   \Jnot{E}   [D], [F]) + J_{(1,2,3,3)}([2], [5], [D], [F]) \, \Big). \\
\end{align*}
The first term is  $\equiv$ 0 by the proof of Theorem \ref{t flag Schur k3 technical} for the case that (B) holds.  We expand the second term as in \eqref{e case no square-1} ($j=3$):
\begin{align*}
& \e{78563FG} J_{(1,2,3,3)}([2], [5], [D], [F]) \\
=~& \e{78563FG} \Big( J_{(1,2,2,3)}([2],[5]  \Jnot{D}  [C],[F]) + J_{(1,2,3,3)}([2],[5],[C],[F]) \, \Big). \\
\end{align*}

Next, we proceed with the inductive computation of the first term above ($R={\tiny\tableau{3 \\ 5 & 6 & D \\ 7 & 8 & F & G}}$, $j=2$, $j'=4$)
\begin{align*}
& \e{78563FG} J_{(1,2,2,3)}([2],[5]  \Jnot{D}  [C],[F]) \\
=~& \e{78563FG} \Big( J_{(1,1,2,3)}([2]   \Jnot{5}   [4]   \Jnot{D}   [C], [F]) +  J_{(1,2,2,3)}([2], [4]   \Jnot{D}   [C], [F]) \, \Big).
\end{align*}
By Proposition \ref{p squares are strict} and Theorem \ref{t flag Schur k3 technical}, this first term is 0.  We continue the computation with the second term:
\begin{align*}
& \e{78563FG}  J_{(1,2,2,3)}([2], [4]   \Jnot{D}   [C], [F]) \\
=~& \e{78563FG} \Big( J_{(1,1,2,3)}([2]   \Jnot{4}   [3]   \Jnot{D}   [C], [F]) +  J_{(1,2,2,3)}([2], [3]   \Jnot{D}   [C], [F]) \, \Big) \\
\equiv~& \e{78563FG} \Big( J_{(1,1,2,3)}([2]   \Jnot{4D}   [3] , [C], [F]) +  J_{(1,2,2,3)}([2], [3]   \Jnot{D}   [C], [F]) \, \Big).
\end{align*}
The input data to the theorem for this first term is $\e{v}= \e{78563FG}$,  $\e{w} = \e{4D}$, $R={\tiny\tableau{3 & 4\\ 5 & 6 & D\\ 7 & 8 & F & G}}$, $j=1$, $j'=4$,  $\alpha = (1,1,2,3)$, $\mathbf{n} = (2,3,C,F)$.
%
\end{example}

\section{Strengthenings of the main theorem}
\label{s generalizations}
Theorem \ref{t flag Schur k3 technical} does not hold in  $\U/\Ilam{k}$ for $k > 3$, nor does it seem likely that $J_\lambda^\mathbf{n}$ can be computed positively in  $\U/\Ilam{k}$ for  $k>3$ using a sequence of  $j$-expansions and manipulations like that of  Lemma \ref{l lambda1 eq lambda2 commute}.
The papers \cite{BD0graph, BF} discuss the difficulties that must be overcome to apply the Fomin-Greene approach to this more general setting.
Here we discuss two extensions of the main theorem to quotients of  $\U$ that are similar to  $\U/\Ilam{3}$ but have weaker relations.
These relations were inspired by the work of Assaf \cite{Sami, Sami2}.
\subsection{An easy  strengthening}
Define $\U/\Ilam{\le k}$ to be the quotient of  $\U$ by the following relations:
\begin{alignat*}{2}
\e{v}&= 0 \qquad &&\text{if  $\e{v}\in \U$ such that $\e{v}= 0$ in  $\U/\Ilam{k}$,} \\
u_iu_j &= u_ju_i \qquad &&\text{for $|i-j| > k$,} \\
(u_au_c-u_cu_a)u_b &= u_b(u_au_c-u_cu_a) \qquad &&\text{for  $a < b < c$ and $c-a \le k$.}
\end{alignat*}
All the results in Sections \ref{s reading} and \ref{s positive monomial} hold this variant $\U/\Ilam{\le 3}$ of Lam's algebra  $\U/\Ilam{3}$.
All the proofs in Sections \ref{s reading} and \ref{s positive monomial} work for this variant with no essential change, except that we require \cite{BF} to prove that the elementary symmetric functions commute in  $\U/\Ilam{\le 3}$.  Since $\U/\Ilam{3}$ is a quotient of  $\U/\Ilam{\le 3}$, this is a slight strengthening of the main theorem.

\subsection{A conjectured strengthening}\label{ss a further conjectured strengthening}
Most of our work on this project was towards proving Theorem \ref{t flag Schur k3 technical} in the
algebra $\U/\Irkst{\le 3}$,
where $\U/\Irkst{\le k}$ is the quotient of  $\U$ by the following relations
\begin{alignat}{2}
\e{v}&= 0 \qquad &&\text{if  $\e{v}\in \U$ has a repeated letter,} \label{e repeated letter}\\
u_au_cu_b &= u_cu_au_b \qquad &&\text{for  $a < b < c$ and $c-a > k$,} \label{e rel2} \\
u_b u_au_c &= u_bu_cu_a \qquad &&\text{for  $a < b < c$ and $c-a > k$,} \\
(u_au_c-u_cu_a)u_b &= u_b(u_au_c-u_cu_a) \qquad &&\text{for  $a < b < c$ and $c-a \le k$.} \label{e rel4}
\end{alignat}
The quotient of  $\U$ by \eqref{e rel2}--\eqref{e rel4} has the plactic algebra and $\U/\Ilam{k}$ as quotients.  Also,
the noncommutative elementary symmetric functions commute in this quotient of  $\U$ by \eqref{e rel2}--\eqref{e rel4} and hence in  $\U/\Irkst{\le k}$
(see \cite{BF}).
Note that for any  $a < b < c$, the relation in \eqref{e rel4} is implied by either of the following pairs of relations
\begin{align}
u_au_cu_b &= u_cu_au_b \quad \text{ and } \quad u_cu_au_b = u_au_cu_b \quad (\text{Knuth relations})\label{e relplac}\\
u_au_cu_b &= u_bu_au_c \quad \text{ and } \quad u_cu_au_b = u_bu_cu_a \quad (\text{rotation relations}).\label{e relco}
\end{align}
A \emph{bijectivization} of $\U/\Irkst{\le k}$ is a quotient of $\U/\Irkst{\le k}$ obtained by adding, independently for every $a < b < c$ such that $c-a \le k$,
either \eqref{e relplac} or \eqref{e relco} to its list of relations.
\begin{remark}\label{r Sami KR}
Let $\U/\Iassafst{k}$ be the bijectivization of  $\U/\Irkst{\le k}$  which uses only  \eqref{e relco}.
It is related to the graphs $\G^{(k)}_{c,D}$ defined in \cite[\textsection4.2]{Sami} as follows (see \cite[\textsection5.3]{BD0graph} and \cite{BF} for further discussion):
the bijection
$\{(w,c) \in \WRib_k(c,D) \mid \text{$w$ a permutation}\} \to \Wi{k}(c,D')$
of Proposition \ref{p equivalence classes} (iii) is a bijection between the vertex set of  $\G^{(k)}_{c,D}$
and $\Wi{k}(c,D') \subseteq \U$; if each word of $\Wi{k}(c,D')$ contains no repeated letter, then this bijection takes connected components of $\G^{(k)}_{c,D}$ to nonzero equivalence classes of  $\U/\Iassafst{k}$; moreover, all the nonzero equivalence classes of  $\U/\Iassafst{k}$ are obtained in this way.
Here, a \emph{nonzero equivalence class} of an algebra  $\U/I$ is a maximal set  $C$ of words of  $\U$ such that  $\e{v} = \e{w} \not= 0$ in  $\U/I$ for all $\e{v},\e{w} \in C$.
\end{remark}


We conjecture that Theorem \ref{t flag Schur k3 technical} holds exactly as stated  with  $\U/\Irkst{\le 3}$ in place of  $\U/\Ilam{3}$.
We believe that the same inductive structure of the proof works in this setting, except some of the steps are much more difficult to justify.
In fact, we know how to prove the slightly weaker statement---that Theorem \ref{t flag Schur k3 technical} holds in any bijectivization of $\U/\Irkst{\le 3}$---assuming the following
\begin{conjecture}\label{cj aaa+1a+2 commute} 
Suppose $m \le n_1 \le \cdots \le n_t$,   $n_i < y_i$ for  $i \in [t]$, $m < x$,  $y_1 - x \ge 3$, and  $y_{i+1} - y_i \ge 3$ for  $i \in [t-1]$.
Set  $\alpha = (a,a,a+1,a+2,\ldots, a+t)$.  Then in $\U/\Irkst{\le 3}$,
\begin{align*}
J_{\alpha}([m]  \Jnot{xy_1 \cdots y_t}  [m], [n_1], \ldots, [n_t]) = \e{x} J_{\alpha}([m]\Jnot{y_1\cdots y_t}[m], [n_1], \ldots, [n_t]).
\end{align*}
\end{conjecture}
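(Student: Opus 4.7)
I plan to prove Conjecture \ref{cj aaa+1a+2 commute} by induction on $t$, viewing it as the $\U/\Irkst{\le 3}$ analog of Corollary \ref{c aaa+1 commute lam}. The base case $t = 0$ is exactly Lemma \ref{l lambda1 eq lambda2 commute}; its proof transfers to $\U/\Irkst{\le 3}$ with essentially no changes, since the noncommutative elementary symmetric functions commute in this quotient as well (by \cite{BF}), and each appeal to Corollary \ref{c temperley lieb} in the original argument can be replaced by the stronger relation \eqref{e repeated letter}, which kills any word containing a repeated letter unconditionally.

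For the inductive step, my first task is to handle the letter $x$ directly by mimicking the four-term cancellation of Lemma \ref{l lambda1 eq lambda2 commute}'s proof in an enlarged shape. I would start from the vanishing element
\begin{align*}
0 \equiv J_{(a, a+1, a+1, a+2, \ldots, a+t)}(\{x\} \cup [m], \{x\} \cup [m], [n_1], \ldots, [n_t])
\end{align*}
(which vanishes by \eqref{e elem sym J0} at $j = 1$, since the first two column supports coincide), apply a 1-expansion and then a 2-expansion via \eqref{e ek induction}, and observe that two of the resulting four terms vanish: one by \eqref{e elem sym J0} again, and one by \eqref{e repeated letter} (it contains a word with two copies of $\e{x}$ separated by monomials in $[m]$). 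The remaining pair, after an application of \eqref{e elem sym Jswap} to identify shape indices, yields
\begin{align*}
J_\alpha([m] \Jnot{x} [m], [n_1], \ldots, [n_t]) \equiv \e{x} J_\alpha([m], [m], [n_1], \ldots, [n_t]),
\end{align*}
which is the degenerate case of the conjecture in which the augmentation consists of $x$ alone.

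The second task is to fold in the trailing letters $y_1, \ldots, y_t$ via an inner induction. My approach would be to push each $y_i$ from the augmentation between slots 1 and 2 out toward its natural position in the $(i+2)$-nd column by an iterated variant of Corollary \ref{c aaa+1 commute lam}, and then absorb it via a $(i+2)$-expansion using the set $\{y_i\} \cup [n_i]$ (legal since $n_i < y_i$). The hypotheses $y_1 - x \ge 3$, $y_{i+1} - y_i \ge 3$, and $n_i < y_i$ together imply $y_i \ge m + 3i + 1$, which controls the necessary commutations and ensures that all intermediate insertions of $y_i$ into supports avoid repetitions that would trivialize the identity via \eqref{e repeated letter}. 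Pushing all the $y_i$'s out of the augmentation should reduce the identity to the $x$-only statement established above.

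The main obstacle will be establishing the multi-letter, $\U/\Irkst{\le 3}$-analog of Corollary \ref{c aaa+1 commute lam}. The proof of the original corollary depends on Lemma \ref{l lambda1 eq lambda2 ss}, which in turn uses the $q$-commutation relation \eqref{e q commute} specific to $\U/\Ilam{k}$ and unavailable in $\U/\Irkst{\le 3}$. I would therefore have to replace each $q$-commutation step by an appeal to the weaker \eqref{e rel4}, to the commutation of noncommutative elementary symmetric functions, or to a direct combinatorial argument based on \eqref{e repeated letter} and the rotation/Knuth-type relations defining $\U/\Irkst{\le 3}$. Developing the right generalization of Lemma \ref{l lambda1 eq lambda2 ss}, allowing several letters in the augmentation slot simultaneously and with suitably adjusted hypotheses on the neighboring supports, will likely require some experimentation and absorb the bulk of the technical effort; the combinatorial bookkeeping of terms generated by iterated $j$-expansions is substantial, but the unconditional kill provided by \eqref{e repeated letter} should keep it manageable compared to the same work carried out in $\U/\Ilam{3}$.
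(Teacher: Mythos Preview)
The statement you are attempting to prove is labeled a \emph{Conjecture} in the paper, and the paper does not prove it: after stating it, the author remarks that the $t=0$ case follows from the proof of Lemma~\ref{l lambda1 eq lambda2 commute} with no essential change, that the $t=1$ case can be proved but ``takes about a page of delicate algebraic manipulations,'' and that the general case ``does not seem to be easy to prove even in the quotient of the plactic algebra by the additional relation~\eqref{e repeated letter}.'' So there is no paper proof to compare against; the question is whether your outline actually closes the gap.

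It does not. Your base case and your ``first task'' (the $x$-only augmentation) are fine and match what the paper already knows. The trouble is entirely in your ``second task.'' Your plan to push each $y_i$ out of the slot-$1$-to-slot-$2$ augmentation toward column $i+2$ is precisely an application of the multi-letter $\U/\Irkst{\le 3}$-analog of Corollary~\ref{c aaa+1 commute lam} that you yourself flag as the ``main obstacle.'' But that analog is not auxiliary to the conjecture---it \emph{is} the conjecture (or something at least as strong): moving a single extra letter past the first $e_?([m])$-factor in the presence of further augmentation letters is exactly what Conjecture~\ref{cj aaa+1a+2 commute} asserts. So your inductive step assumes what it is trying to prove. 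Concretely, in $\U/\Irkst{\le 3}$ there is no far commutation relation at all; relations~\eqref{e rel2}--\eqref{e rel4} are all three-term, so the spacing bound $y_i \ge m+3i+1$ you derive does not by itself let you commute $y_i$ past letters of $[m]$.

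Your closing remark that the kill provided by~\eqref{e repeated letter} ``should keep it manageable'' is directly contradicted by the paper's observation that the conjecture is already hard in the plactic-plus-\eqref{e repeated letter} quotient, which has strictly more relations than $\U/\Irkst{\le 3}$. At present your proposal is an honest description of where the difficulty lies, but not a proof strategy that reduces the conjecture to anything easier.
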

This is a replacement of Corollary \ref{c aaa+1 commute lam} and is needed because, without the far commutation relations, this corollary can no longer be deduced from Lemma \ref{l lambda1 eq lambda2 commute}.
When  $t=0$ this becomes Lemma \ref{l lambda1 eq lambda2 commute} for  $\U/\Irkst{\le 3}$, and the given proof of this lemma carries over with no essential change.
We can also prove the  $t=1$ case, which takes about a page of delicate algebraic manipulations.
Conjecture \ref{cj aaa+1a+2 commute} does not seem to be easy to prove even in the quotient of the plactic algebra by the additional relation \eqref{e repeated letter}.

Another ingredient required to extend the proof of Theorem \ref{t flag Schur k3 technical} to this setting is the following variant of Theorem \ref{t square respecting connected}.
The proof is substantial and requires identifying a nonobvious binomial relation in $\U/\Irkst{\le 3}$.
\begin{theorem}\label{t gen square respecting connected}
Any two square respecting reading words of an RSST are equal in $\U/\Irkst{\le 3}$.
\end{theorem}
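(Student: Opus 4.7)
The plan is to mirror the proof of Theorem \ref{t square respecting connected}, proceeding by induction on $|R|$. As before, one introduces the graph $\R_R$ whose vertex set is the collection of square respecting reading words of $R$, but now the edges correspond to the elementary relations of $\U/\Irkst{\le 3}$ rather than to the far commutation relation of $\U/\Ilam{3}$. The goal is to show $\R_R$ is connected. The base case $|R|\le 1$ is trivial.

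For the inductive step, let $z_1,\dots,z_t$ denote the nontail removable cells of $R$ (Lemma \ref{l exist no tail corner} guarantees $t\ge 1$). By induction, each subgraph $\R_{R-z_i}\e{R_{z_i}}$ of reading words of $R$ ending in $\e{R_{z_i}}$ is internally connected. Contracting each of these to a single vertex produces an auxiliary graph $H$ on $t$ vertices, and the theorem reduces to proving that $H$ is connected. Exactly as in the proof of Theorem \ref{t square respecting connected}, the definition of RSST forces $|R_{z_i}-R_{z_j}|>3$ for $i\ne j$; in the setting of $\U/\Ilam{3}$ this sufficed because far commutation immediately gave $\e{R_{z_i}R_{z_j}}\equiv \e{R_{z_j}R_{z_i}}$.

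The main obstacle is that in $\U/\Irkst{\le 3}$ there is no two-letter commutation relation: for $a<c$ with $c-a>3$, the identity $u_au_c=u_cu_a$ is no longer a defining relation, and only the rotation relations \eqref{e rel2} and the conjugation relation \eqref{e rel4}, each requiring a third letter $u_b$ with $a<b<c$, are available. One must therefore establish a binomial identity in $\U/\Irkst{\le 3}$ of the shape
\[
\e{p}\,u_au_c \;\equiv\; \e{p'}\,u_cu_a,
\]
where $a=R_{z_i}$, $c=R_{z_j}$, $\e{p}$ is (the prefix before the last two letters of) a square respecting reading word of $R$, and $\e{p'}$ differs from $\e{p}$ by an explicit combinatorial modification that can still be recognized as a prefix of a square respecting reading word of $R-\{z_i,z_j\}$ augmented by some preparatory letter. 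The rough idea is to exploit the RSST hypothesis: since $c-a>3$, the diagonals of $R$ strictly between contents $a$ and $c$ must be populated with letters appearing in $\e{p}$, and these intermediate letters can be used as the middle term $u_b$ in iterated applications of \eqref{e rel2}--\eqref{e rel4} that collectively transport $u_a$ past $u_c$. Identifying a canonical such chain, recording the induced rewriting $\e{p}\mapsto\e{p'}$, and checking that every intermediate configuration remains a valid (nonzero) word in $\U/\Irkst{\le 3}$ is the delicate combinatorial heart of the proof, of a similar flavor to the $t=1$ case of Conjecture \ref{cj aaa+1a+2 commute}.

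Once this binomial relation is in hand, the remainder of the argument is formal: it supplies an edge in $H$ between the contracted vertex for $\R_{R-z_i}\e{R_{z_i}}$ and that for $\R_{R-z_j}\e{R_{z_j}}$, for every pair $i\ne j$, making $H$ a complete graph on $t$ vertices and hence connected. This closes the induction and proves that any two square respecting reading words of $R$ are equal in $\U/\Irkst{\le 3}$.
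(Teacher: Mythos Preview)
The paper does not actually prove Theorem~\ref{t gen square respecting connected}; it only states the result and remarks that ``the proof is substantial and requires identifying a nonobvious binomial relation in $\U/\Irkst{\le 3}$.'' Your outline is consistent with that remark: you correctly recognize that the inductive skeleton of Theorem~\ref{t square respecting connected} must be augmented by a binomial relation that plays the role of the (now absent) far commutation to swap two nontail removable letters at the end of a reading word.

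That said, what you have written is a strategy, not a proof. The entire content of the theorem lies in the step you describe as ``the delicate combinatorial heart of the proof'': producing, for each pair $z_i,z_j$ of nontail removable cells, an explicit chain of relations \eqref{e rel2}--\eqref{e rel4} that takes some square respecting reading word ending in $\e{R_{z_i}R_{z_j}}$ to one ending in $\e{R_{z_j}R_{z_i}}$, and verifying at each step that the intermediate word is still a square respecting reading word of $R$ (so that the induction hypothesis on $R-z_i$ and $R-z_j$ can be invoked). You have not supplied this chain, nor identified the ``nonobvious binomial relation'' the paper alludes to; the heuristic that intermediate diagonals of $R$ provide the needed middle letters $u_b$ is plausible but does not by itself yield a working sequence of relations. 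In particular, each application of \eqref{e rel2}--\eqref{e rel4} moves three consecutive letters, so the rewriting $\e{p}\mapsto\e{p}'$ is not local to the last two positions, and keeping $\e{p}'$ within the set of square respecting prefixes is exactly the substantial work the paper refers to.
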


\section*{Acknowledgments}
I am extremely grateful to Sergey Fomin for his valuable insights and guidance on this project and to John Stembridge for his generous advice and many helpful discussions.  I thank Thomas Lam and Sami Assaf for several valuable discussions and Xun Zhu and Caleb Springer for help typing and typesetting figures.

\bibliographystyle{plain}
\bibliography{mycitations}

\end{document}